\theoremstyle{plain}
\newtheorem{cor}{Corollary}
\newtheorem{lem}[cor]{Lemma}
\newtheorem{prop}[cor]{Proposition}
\newtheorem{thm}[cor]{Theorem}
\newtheorem*{thm*}{Theorem}
\theoremstyle{definition}
\newtheorem{definition}[cor]{Definition}
\newtheorem{remark}[cor]{Remark}
\newtheorem*{remark*}{Remark}
\numberwithin{cor}{section}
\numberwithin{equation}{section}
\DeclareMathOperator{\C}{C}
\DeclareMathOperator{\BUC}{BUC}
\DeclareMathOperator{\sgn}{sgn}
\newcommand{\abs}[1]{\left|#1\right|}
\newcommand{\norm}[1]{\left\|#1\right\|}
\providecommand{\ud}[1]{\, \mathrm{d} #1}
\providecommand{\dx}{\ud{x}}
\providecommand{\dy}{\ud{y}}
\providecommand{\dxi}{\ud \xi}
\providecommand{\deta}{\ud{\eta}}
\providecommand{\dr}{\ud{r}}
\providecommand{\dxp}{\ud{x'}}
\providecommand{\dxip}{\ud{\xi'}}
\providecommand{\ds}{\ud{s}}
\providecommand{\dz}{\ud{z}}
\providecommand{\dd}{\ud}
\providecommand{\ve}{\varepsilon}
\def\XXint#1#2#3{{\setbox0=\hbox{$#1{#2#3}{\int}$ }
\vcenter{\hbox{$#2#3$ }}\kern-.6\wd0}}
\title[stochastic nonlinear diffusion equations]{Path-by-path well-posedness of nonlinear diffusion equations with multiplicative noise}
\author{Benjamin Fehrman, Benjamin Gess}
\date{\today}
\begin{document}

\begin{abstract}  We prove the path-by-path well-posedness of stochastic porous media and fast diffusion equations driven by linear, multiplicative noise. As a consequence, we obtain the existence of a random dynamical system. This solves an open problem raised in [Barbu, R\"ockner; JDE, 2011], [Barbu, R\"ockner; JDE, 2018], and [Gess, AoP, 2014].
\end{abstract}

\maketitle

\section{Introduction}

In this paper, we consider stochastic porous media and fast diffusion equations with linear multiplicative noise of the form 
\begin{equation}
\left\{ \begin{array}{ll}
du=\Delta\left(u\abs{u}^{m-1}\right)+\sum_{k=1}^{n}f_{k}(x)u\circ\dz^{k}_t & \textrm{in}\;\;U\times(0,\infty),\\
u=u_{0} & \textrm{on}\;\;U\times\{0\},\\
u=0 & \textrm{on}\;\;\partial U\times(0,\infty),
\end{array}\right.\label{intro_eq}
\end{equation}
with nonnegative initial data $u_{0}\in L^{2}(U)$, diffusion exponent $m\in(0,\infty)$, domain $U\subset\mathbb{R}^{d}$ smooth and bounded, $d\ge 1$, driving noise $z=(z^{1},\ldots,z^{n})\in C^{0}(\mathbb{R}_{+};\mathbb{R}^{n})$, and coefficients $\{f_{k}\}_{k=1}^{n}\in\C_{b}^{2}(U;\mathbb{R}^{n})$ for some $n\in\mathbb{N}$.

Stochastic nonlinear diffusion equations of the type \eqref{intro_eq} have a rich and rapidly developing history (cf.~e.g.~Pardoux \cite{P75}, Krylov and Rozovskii \cite{KR79}, Pr\'evot and R\"ockner \cite{PR07}, Barbu and R\"ockner \cite{BR15}, Barbu, Da Prato, and R\"ockner \cite{BDPR16}, Ren, R\"ockner, and Wang \cite{RenRocWan2007}, the second author and R\"ockner \cite{GR}). Their probabilistic well-posedness, in the sense of pathwise uniqueness and probabilistically weak uniqueness, is thus rather well understood. In contrast, in the fast diffusion case $m\in(0,1)$, the path-by-path well-posedness of \eqref{intro_eq} and the related problem of the existence of a corresponding stochastic flow and random dynamical system have proven to be notoriously difficult and have been posed as open questions in several works, e.g.\ Barbu and R\"ockner \cite{BR11b,BR15,BR17} and the second author \cite{Gess1,G15}. In addition, even in the porous medium case $m\in(1,\infty)$, for general initial data $u_0\in L^2(U)$, path-by-path solutions could only be obtained in a limiting sense in \cite{BR11b,BR17,Gess1}, thus lacking a characterization in terms of (generalized) solutions to \eqref{intro_eq}. 

The theory of random dynamical systems is the attempt of joining methods from stochastic analysis and dynamical systems, in order to make accessible the qualitative analysis of (semi-)flows of solutions to stochastic (partial) differential equations to tools from dynamical systems theory. This attempt is motivated and driven by the success of the respective theory in the analysis of deterministic (partial) differential equations, see, for example, Temam \cite{T97}. For instance, this allows the application of the multiplicative ergodic theorem, leading to the concepts of Lyapunov exponents and invariant manifolds. In contrast to deterministic (partial) differential equations, the (semi-)flow property for solutions to stochastic  (partial) differential equations is not a simple consequence of the uniqueness of solutions. While in finite dimensions this issue can be handled with methods based on Kolmogorov's continuity theorem, the infinite dimensional case lacks a generic treatment and counter-examples are known. The construction of (semi-)flows of solutions thus is a key obstacle to the dynamical systems approach to stochastic PDE. The present work contributes to this challenging problem by expanding the class of nonlinear diffusion equations with linear multiplicative noise for which a (semi-)flow of solutions can be constructed, and by introducing a unified construction of random dynamical systems for this class of stochastic PDE.

The main result of this work is the path-by-path well-posedness of \eqref{intro_eq} in the full range $m\in(0,\infty)$ which implies the existence of a corresponding random dynamical system.  This is the first path-by-path well-posedness result in the fast diffusion range $m\in(0,1)$, and our results and methods provide a unifying approach the whole range $m\in(0,\infty)$.

\begin{thm*}[Theorem \ref{thm_unique} below] Let $u_{0}^{1},u_{0}^{2}\in L^{2}(U)$ be nonnegative and $u^{1}, u^{2}$ be corresponding pathwise kinetic solutions to \eqref{intro_eq}.  Then, for each $T>0$, there exists $C=C(T,z)>0$ such that
\[
\norm{\left(u^{1}(t)-u^{2}(t)\right)\varphi}_{L^\infty([0,T];L^{1}(U))}\leq C\norm{\left(u_{0}^{1}-u_{0}^{2}\right)\varphi}_{L^{1}(U)},
\]
where $\varphi$ is the solution to \eqref{eq:intro_vp} below. In particular, pathwise kinetic solutions are unique. \end{thm*}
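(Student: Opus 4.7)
My plan is to establish the weighted $L^1$-contraction by a doubling of variables argument in the kinetic framework, in the spirit of Chen--Perthame and Debussche--Vovelle, adapted to the pathwise setting necessitated by the Stratonovich-type multiplicative noise in \eqref{intro_eq}. The starting point is to exploit the weight $\varphi$ from \eqref{eq:intro_vp} to absorb the noise in a path-by-path manner: multiplying the kinetic equation for $u^i$ by $\varphi$ recasts the stochastic transport term in $\xi$ as a deterministic correction, converting the problem into a random but path-by-path deterministic nonlinear diffusion equation whose coefficients depend explicitly on $(f_k,z^k)$. With $\chi^i(x,t,\xi) := \chi(u^i(x,t),\xi)$ the kinetic functions and $q^i \geq 0$ the associated kinetic dissipation measures, the definition of a pathwise kinetic solution yields closed equations for $\varphi\chi^i$ in which the noise appears only as a pathwise regular source.

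Next I would double variables: test the equations for $\chi^1$ and $\chi^2$ against a product of mollifiers $\rho_\ve(\xi-\eta)\psi_\delta(x-y)\varphi(x)$, apply a Kruzhkov-type algebraic identity, and integrate by parts in both $x$ and $\xi$. Summing the two resulting identities yields, at least formally,
\[
\frac{\mathrm{d}}{\mathrm{d}t}\norm{(u^1-u^2)\varphi}_{L^1(U)} \leq C(t,z)\,\norm{(u^1-u^2)\varphi}_{L^1(U)} + E_{\ve,\delta}(t),
\]
where $C(t,z)$ depends polynomially on $\abs{f_k}_{\C_b^2}$ and the path $z$, and $E_{\ve,\delta}$ collects the diffusion cross terms together with the error from localizing the kinetic entropy. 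Once $E_{\ve,\delta}\to 0$ as $\ve\to 0$ and then $\delta\to 0$, a pathwise Gronwall argument in $t$ upgrades the differential inequality to the claimed $L^\infty([0,T];L^1(U))$ estimate with $C=C(T,z)$.

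The main obstacle is controlling the diffusion cross terms, namely
\[
\int\! \rho_\ve(\xi-\eta)\psi_\delta(x-y)\varphi(x)\bigl(\nabla_x q^1 \ast \nabla_y q^2\bigr)\,\dx\dy\,\dxi\deta,
\]
in the limit $\ve\to 0$, especially in the fast diffusion regime $\m\in(0,1)$, where $\xi\abs{\xi}^{\m-1}$ has an unbounded derivative at the origin and the measures $q^i$ may concentrate on $\{\xi=0\}$. The decisive input is an a priori estimate on the $\xi$-marginal of $q^i$, equivalently integrability of $\nabla(u^i)^{(\m+1)/2}$ from the energy inequality, which rules out singular contributions at $\{\xi=0\}$ and ensures that the cross terms vanish in the limit. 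This is precisely the feature that makes the pathwise kinetic formulation effective where the classical $H^{-1}$ or variational framework does not yield path-by-path uniqueness. Once this step is in place, the remaining terms are of lower order and are handled by standard mollifier estimates, giving the Gronwall inequality and hence the stated contraction.
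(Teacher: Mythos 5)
Your outline follows the general shape of the paper's argument (kinetic functions, doubling of variables, mollification, controlling the diffusion cross terms, then a time-iteration/Gr\"onwall step), but it misidentifies the two ingredients that actually make the proof work, and on the second of these the gap is fatal in precisely the regime the theorem was meant to address.

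First, the noise is not cancelled by the weight $\varphi$. In the pathwise kinetic formulation the noise is removed by \emph{transporting the test functions along the inverse stochastic characteristics} $\Pi^{x,\xi}_{t,t-s}$ and weighting by the Jacobian $v_{s,t}(x)=\exp(-\sum_k f_k(x) z^k_{s,t})$; this is built into Definition~\ref{sol_def} via \eqref{transport_equation}. Since the characteristics do not preserve Lebesgue measure, the plain $L^1$-norm is \emph{not} contractive, and the role of $\varphi$ (with $\Delta\varphi = -1$) is different: the combination $v_{0,t}\varphi$ satisfies $\Delta(v_{0,t}\varphi)\leq 0$ for short times, which is what gives a sign to the boundary/commutator term after integration by parts. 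Because this sign only holds for short times, the paper runs an induction over short time intervals rather than a single Gr\"onwall estimate. Your proposal, by assigning the noise cancellation to $\varphi$ and then invoking Gr\"onwall, never produces a term $\Delta(v_{0,t}\varphi)$ and never explains how you would obtain a favorable sign; that step will not close.

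Second, and more importantly: you state that the decisive input controlling the diffusion cross terms near $\{\xi=0\}$ is the integrability of $\nabla (u^i)^{(m+1)/2}$ coming from the energy inequality. That estimate is part (i) of the \emph{definition} of a pathwise kinetic solution and was already available in the earlier works; if it sufficed, the fast-diffusion case $m\in(0,1)$ would not have been open. A simple scaling check on the commutator shows it does not suffice: after Young's inequality, the bad term behaves like
\[
\ve^{-1}\bigl(|u^1|^{\frac{m-1}{2}}-|u^2|^{\frac{m-1}{2}}\bigr)^{2}\nabla u^1\cdot\nabla u^2\,\mathbf{1}_{\{|u^1-u^2|\leq\ve\}},
\]
and on $\{|u^1|,|u^2|\lesssim\ve\}$ one only gets $O(\ve^{-1})$ in terms of $\nabla(u^i)^{(m+1)/2}$. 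The paper closes this by proving a genuinely new interior regularity bound, $\nabla u^{m/2}\in L^2_{\mathrm{loc}}$, equivalently the singular moment bound $\int_0^T\int_\mathbb{R}\int_U|\xi|^{-1}(p+q)\,\psi<\infty$ of Proposition~\ref{aux_log}, obtained by testing the transported kinetic equation against $\log(\xi)\psi(x)$. This requires the nonnegativity of the solution (hence of $u_0$), which is exactly why the hypothesis is there. With that bound, writing $\nabla u^i = |u^i|^{(2-m)/2}\nabla (u^i)^{m/2}$ in the cross term gives a prefactor of order $O(1)$ against an $L^1$ integrand that vanishes as $\ve\to 0$ by dominated convergence. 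Your proposal does not identify this estimate, does not explain why it is needed, and does not give a route to prove it, so it would not recover the result for $m\in(0,1)\cup(1,2]$.

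Two further items you would eventually have to address (less serious, but not optional): the mollification \eqref{tu_0} is only defined at distance $\ge\ve$ from $\partial U$, which forces the introduction of the boundary cutoff $\varphi_\beta$ and a separate limit $\beta\to 0$ using the regularity $u^{[m]}\in L^{p_m}([0,T];W^{1,p_m}_0(U))$ from Lemma~\ref{lem_bdry}; and the commutator errors coming from the $x$-dependence of the characteristics (terms involving $\nabla_x\Pi^{x,\xi}$ and $\nabla_x v$) have to be controlled via the modulus of continuity of the path $z$ and the time-splitting, not merely by mollifier estimates.
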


\begin{thm*}[Theorem \ref{thm_exist} below] Let $u_{0}\in L^{2}(U)$. There exists a pathwise kinetic solution of \eqref{intro_eq} with initial data $u_{0}$. Furthermore, for each $T>0$, for $C=C(m,U,T,z)>0$, 
\[
\norm{u}_{L^{\infty}([0,T];L^{2}(U))}^{2}+\norm{u^{\left[\frac{m+1}{2}\right]}}_{L^{2}([0,T];H_{0}^{1}(U))}^{2}\leq C\norm{u_{0}}_{L^{2}(U)}^{2},
\]
and, if $u_{0}$ is nonnegative, then so is $u$.
\end{thm*}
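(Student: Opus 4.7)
The plan is to prove existence via a smooth-noise approximation combined with a kinetic-compactness argument. Let $z^{\ve}\in C^1(\R_+;\R^n)$ approximate $z$ locally uniformly on $\R_+$. For each fixed $\ve>0$, equation \eqref{intro_eq} with $z$ replaced by $z^{\ve}$ is a deterministic quasilinear parabolic equation with time-dependent coefficients, and classical variational or monotone-operator methods (Pardoux, Krylov--Rozovskii, Barbu--R\"ockner) produce a weak solution $u^{\ve}\in L^\infty([0,T];L^2(U))$ with $(u^{\ve})^{\left[\frac{m+1}{2}\right]}\in L^2([0,T];H^1_0(U))$. When $u_0\ge 0$, a truncation or maximum-principle argument at this approximate level yields $u^{\ve}\ge 0$.

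To obtain bounds uniform in $\ve$, I would use an exponential transform: set $\mu^{\ve}(x,t):=\sum_{k=1}^n f_k(x)z^{k,\ve}(t)$ and $v^{\ve}:=e^{-\mu^{\ve}}u^{\ve}$. The Stratonovich noise is then absorbed into the principal part, so $v^{\ve}$ solves a deterministic parabolic equation whose coefficients depend on $\{f_k\}$ and $z^{\ve}$ only through $\|f_k\|_{C^2_b}$ and $\|z^{\ve}\|_{L^\infty([0,T])}$, and are therefore controlled uniformly in $\ve$. Testing against $v^{\ve}$, exploiting the monotonicity of the porous-medium nonlinearity, and applying Gronwall's inequality yields an energy estimate for $v^{\ve}$; transferring back via $u^{\ve}=e^{\mu^{\ve}}v^{\ve}$ gives
\[
\|u^{\ve}\|_{L^\infty([0,T];L^2(U))}^2+\|(u^{\ve})^{\left[\frac{m+1}{2}\right]}\|_{L^2([0,T];H^1_0(U))}^2\le C\|u_0\|_{L^2(U)}^2
\]
with $C=C(m,U,T,z)$ independent of $\ve$.

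With uniform bounds in place, the remaining task is to pass to the limit in the degenerate nonlinearity $\Delta(u^{\ve}|u^{\ve}|^{m-1})$. I would work at the level of the kinetic formulation from earlier in the paper. The kinetic functions $\chi^{\ve}(s,x,t)$ associated with $u^{\ve}$ are uniformly bounded and satisfy a kinetic equation whose parabolic defect measure has total mass controlled by the $L^2H^1_0$-bound above, and whose transport term is driven by the smooth path $z^{\ve}$. Adapting the compactness argument of Chen--Perthame and Debussche--Vovelle, in the form used by the authors in prior work on stochastic porous-medium equations, a subsequence of $\chi^{\ve}$ converges in an appropriate topology to a limit of the form $\chi(s,u(x,t))$ with $u$ inheriting the stated bound, and $u$ solves \eqref{intro_eq} in the pathwise kinetic sense.

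The hardest step will be this limiting procedure at the kinetic level: one must establish lower semicontinuity of the parabolic defect measure, identify the limit of the transport term under the rough convergence $z^{\ve}\to z$ (uniform but not of bounded variation), and verify that no kinetic mass escapes to the lateral boundary $\partial U\times[0,T]$ or to $|s|=\infty$. The uniqueness theorem stated above then promotes convergence of the entire family $u^{\ve}$, and the pointwise bound $u^{\ve}\ge 0$ is preserved in the limit, yielding nonnegativity of $u$.
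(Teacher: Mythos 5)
Your outline agrees with the paper on the broad strategy (smooth approximation of the noise, exponential change of variables, passage to the limit via the kinetic formulation), but there are two genuine gaps and one substantive difference of route worth flagging.

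The first gap is conceptual and the most serious. You list as a ``hardest step'' the task of identifying ``the limit of the transport term under the rough convergence $z^{\ve}\to z$.'' But there is no such limit: $\dot z^{\ve}$ does not converge in any space in which the product $\partial_\xi\chi^{\ve}\,\xi f_k\dot z^{k,\ve}$ could be interpreted, and this is precisely the obstruction that the notion of pathwise kinetic solution is designed to avoid. The paper does not pass to the limit in the kinetic equation \eqref{char_kinetic_eq} directly; it first tests that equation against test functions transported by the inverse characteristics \eqref{c_backward} and weighted by the Jacobian factor \eqref{c_weight} (Proposition~\ref{c_eq}), producing an identity \eqref{c_eq_1} in which the noise term is exactly cancelled. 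Only \emph{that} noise-free identity survives the singular limit $\ve\to0$, and it is the identity that Definition~\ref{sol_def} requires the limit $u$ to satisfy. Any proof that tries to take the limit before this cancellation cannot succeed.

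The second gap is that you propose to obtain compactness via a Chen--Perthame/Debussche--Vovelle averaging argument applied to $\chi^{\ve}$, but you have not addressed where the compactness actually enters. The paper makes essential use of the exponential transform for compactness, not merely for energy estimates: the time derivative $\partial_t u^{\eta,\ve}$ is not uniformly bounded in $\ve$ (it contains $f_k u^{\eta,\ve}\dot z^{k,\ve}$), whereas $\partial_t\tilde u^{\eta,\ve}$ for $\tilde u^{\eta,\ve}=v^{\ve}_{0,t}u^{\eta,\ve}$ \emph{is} bounded in $L^{p_m}([0,T];W^{-1,q_m}(U))$ uniformly in $\eta,\ve$ (Proposition~\ref{exist_time}). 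Combined with the uniform fractional Sobolev bound from Corollary~\ref{exist_fractional} (Ebmeyer's estimate, which is where the $L^2H^1_0$ bound on $(u^{\eta,\ve})^{[(m+1)/2]}$ is converted into positive spatial regularity of $u^{\eta,\ve}$ itself), this yields strong $L^1$ compactness of $\{\tilde u^{\eta,\ve}\}$ via Aubin--Lions--Simon, and hence of $\{u^{\eta,\ve}\}$. An averaging-lemma route may be possible, but you would need to average the \emph{transported} kinetic function to sidestep the wild transport term, which amounts to the same change of variables; as written your plan does not include the transform at the compactness stage.

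Two smaller differences: the paper regularizes with an additional artificial viscosity $\eta\Delta u^{\eta,\ve}$ (so that Proposition~\ref{reg_exists} applies and the approximating problem is genuinely nondegenerate), while you rely on direct monotone-operator existence for the degenerate equation with smooth noise; this is probably repairable but must be justified. And your nonnegativity argument (maximum principle at the approximate level, preserved in the strong limit) differs from the paper's, which derives nonnegativity from the uniqueness theorem by comparison with the zero solution (Corollary~\ref{aux_cor}); both are legitimate. Finally, you correctly anticipate the need to decompose the limiting defect measure, which is exactly what the paper does in \eqref{te_8}: $p:=p'+q'-q$, where $q$ is the parabolic defect of the limit and the nonnegativity of $p$ is weak lower semicontinuity of the $L^2$-norm of the gradient under the weak convergence \eqref{te_6}.
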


As discussed above, the task of proving path-by-path well-posedness for \eqref{intro_eq} is partially motivated by the question of the existence of a random dynamical system associated to \eqref{intro_eq}. A general introduction to the theory of random dynamical systems can be found in Arnold \cite{A98} and Flandoli \cite{Flandoli}.  In combination, the results above prove the existence of a random dynamical system for \eqref{intro_eq} on the space $L_{+}^{2}(U)$. For simplicity, we specialize the statement to the case of fractional Brownian motion.

\begin{thm*} Suppose that $t\in[0,\infty)\mapsto z_{t}(\omega)$ is given as the sample paths of a fractional Brownian motion with Hurst parameter $H\in(0,1)$. Then, the pathwise kinetic solutions to \eqref{intro_eq} define a random dynamical system on $L_{+}^{2}(U)$. \end{thm*}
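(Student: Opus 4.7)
The plan is to verify the Arnold-style definition of a random dynamical system in two pieces: the metric dynamical system modeling the driving noise, and a measurable cocycle built from the pathwise solution map. The first piece is classical for two-sided fractional Brownian motion; the real content is to promote the path-by-path well-posedness of Theorems \ref{thm_unique} and \ref{thm_exist} into a measurable cocycle on $L_+^2(U)$.

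I take the canonical probability space $\Omega := C_0(\R;\R^n)$ of continuous paths vanishing at zero, equipped with the Borel $\sigma$-algebra $\mathcal{F}$ of the compact-open topology and the law $\Prob$ of two-sided fractional Brownian motion with Hurst parameter $H$. The driving metric dynamical system is given by the Wiener shifts $\theta_t \omega(\cdot) := \omega(\cdot + t) - \omega(t)$, which classically form a measurable group of $\Prob$-preserving transformations. For $\omega \in \Omega$ and nonnegative $u_0 \in L^2(U)$, let $u(\cdot;\omega,u_0)$ denote the unique pathwise kinetic solution of \eqref{intro_eq} with driver $z = \omega|_{[0,\infty)}$ and initial datum $u_0$, delivered by Theorems \ref{thm_exist} and \ref{thm_unique}. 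I set
\[
\varphi \colon [0,\infty) \times \Omega \times L_+^2(U) \to L_+^2(U), \qquad \varphi(t,\omega)u_0 := u(t;\omega,u_0),
\]
which preserves $L_+^2(U)$ by the positivity statement of Theorem \ref{thm_exist}.

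The cocycle identity $\varphi(t+s,\omega)u_0 = \varphi(t,\theta_s\omega)\varphi(s,\omega)u_0$ follows from pathwise uniqueness and the time-autonomy of \eqref{intro_eq}: given $u(\cdot;\omega,u_0)$, the time-translate $r \mapsto u(r+s;\omega,u_0)$ is itself a pathwise kinetic solution of \eqref{intro_eq}, now with driver $\theta_s\omega$ and initial datum $u(s;\omega,u_0)$, because the coefficients of \eqref{intro_eq} are time-independent and the Stratonovich form of the noise transforms covariantly under time translation. Theorem \ref{thm_unique}, which applies $\omega$-by-$\omega$, then forces the desired equality with $\varphi(\cdot,\theta_s\omega)\varphi(s,\omega)u_0$. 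The identity $\varphi(0,\omega) = \mathrm{id}_{L_+^2(U)}$ is immediate.

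The principal remaining obstacle is joint measurability of $(t,\omega,u_0) \mapsto \varphi(t,\omega)u_0$. My plan is to combine the Lipschitz continuity in initial data from Theorem \ref{thm_unique} with a continuous dependence on the driving path deduced from the construction of pathwise kinetic solutions: the standard exponential transformation eliminating the linear multiplicative noise converts \eqref{intro_eq} into a random PDE whose coefficients depend continuously on $z$, while the constants appearing in Theorems \ref{thm_exist} and \ref{thm_unique} depend only on $\|z\|_{C([0,T];\R^n)}$. Approximating $\omega$ by smooth paths $\omega^N \to \omega$ in the compact-open topology of $\Omega$ then yields $u(\cdot;\omega^N,u_0) \to u(\cdot;\omega,u_0)$ in $C([0,T];L^1(U))$. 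Continuity in $\omega$ for each fixed $u_0$, together with Lipschitz continuity in $u_0$ for each fixed $\omega$, gives a Carath\'eodory map and hence joint Borel measurability in $(\omega,u_0)$; time continuity of pathwise kinetic solutions promotes this to joint measurability in $(t,\omega,u_0)$. The hard part will be quantifying this continuous dependence on the driving path with enough uniformity to pass to the limit along approximating sequences of noise paths; it is here that the explicit nature of the constant $C(T,z)$ in Theorem \ref{thm_unique} is critical. Everything else --- the metric dynamical system, the cocycle identity via uniqueness, and positivity preservation --- is either classical or is an immediate consequence of Theorems \ref{thm_unique} and \ref{thm_exist}.
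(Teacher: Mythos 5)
The paper states this theorem in the introduction as a consequence of Theorems~\ref{thm_unique} and \ref{thm_exist} and does not supply a separate proof, so there is nothing in the body to compare against; on its own merits, your proposal identifies the right skeleton (Wiener-shift metric dynamical system, cocycle identity via pathwise uniqueness, measurability via continuity in the driving path), but it leaves two genuine gaps that cannot be dismissed as routine.

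First, the cocycle identity as you have written it quietly presumes more regularity than Definition~\ref{sol_def} provides. A pathwise kinetic solution is only required to lie in $L^\infty_{\mathrm{loc}}([0,\infty);L^2(U))$, and the transport equation \eqref{transport_equation} is only asserted for endpoints $s<t$ in $[0,\infty)\setminus\mathcal{N}$, with the initial condition attained at $s=0$ only. To declare $r\mapsto u(r+s;\omega,u_0)$ a pathwise kinetic solution for the driver $\theta_s\omega$ with initial datum $u(s;\omega,u_0)$, you must (a) ensure $u(s;\omega,u_0)$ is well defined for \emph{all} $s\ge 0$, which requires extracting a time-continuous (at least $C([0,T];L^1(U))$-continuous) version of the solution — this in fact follows from Theorem~\ref{thm_unique}, but you should say so; and (b) check that the attainment-of-initial-data clause in Definition~\ref{sol_def}, which is stated only at the origin, is inherited at time $s$ for the shifted problem, and that the shifted exceptional set $(\mathcal{N}-s)\cap[0,\infty)$ is again null. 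Your appeal to "time-autonomy" papers over exactly these bookkeeping issues.

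Second, the measurability step is acknowledged as "the hard part" but not actually resolved. The Lipschitz estimate in Theorem~\ref{thm_unique} compares two solutions for the \emph{same} driver $z$; it does not by itself give continuous dependence on $z$ in the compact-open topology, which is what your claimed convergence $u(\cdot;\omega^N,u_0)\to u(\cdot;\omega,u_0)$ in $C([0,T];L^1(U))$ requires. A cleaner route, consistent with the construction in Section~4, is to observe that for each fixed mollification scale $(\eta,\ve)$ the regularized solution $u^{\eta,\ve}(\cdot;\omega,u_0)$ is jointly measurable (indeed continuous) in $(\omega,u_0)$ by classical PDE theory, that $u^{\eta,\ve}\to u$ pointwise in $\omega$ along the full sequence by uniqueness of the limit (Theorem~\ref{thm_unique}), and that pointwise limits of measurable maps into a Polish space are measurable. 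This replaces the unproven continuity-in-$\omega$ claim with a direct measurability argument and eliminates the need to quantify stability of the limit equation in the driving path.
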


\begin{remark*}  The $L^2$-integrability of the initial data is assumed for simplicity.  The results of this paper can be extended to nonnegative initial data in $L^1(U)$ at the cost of additional technicalities.  In particular, the definition of a pathwise kinetic solution needs to be modified, since the \emph{entropy} and \emph{parabolic defect measures} will no longer be globally integrable (cf.\ Definition~\ref{sol_def} below).  The proof of uniqueness and the stable estimates would also need to be localized in order to account for the lack of integrability. \end{remark*}

\subsection*{Aspects of the proof and motivation of the kinetic formulation}  The arguments of this work rely on the kinetic formulation of \eqref{intro_eq}, introduced by Chen and Perthame \cite{ChenPerthame}. Motivated by the theory of stochastic viscosity solutions for fully-nonlinear second-order stochastic partial differential equations by Lions and Souganidis \cite{LSstoch5,LSstoch4,LSstoch3,LSstoch2,LSstoch1}, and the work of Lions, Perthame and Souganidis \cite{LPS1,LPS}, the second author and Souganidis \cite{GessSouganidis,GessSouganidis1,GessSouganidis2}, and the two authors \cite{FehrmanGess} on stochastic scalar conservation laws, this gives rise to the notion of a \emph{pathwise kinetic solution} (cf. Definition~\ref{sol_def} below).

In the following we present an informal sketch of the proof of uniqueness of solutions that motivates the introduction of the kinetic formulation of \eqref{intro_eq}. It is at this point where the methods of the present work deviate from the previous, un-successful attempts \cite{BR11b,BR15,BR17,Gess1,G15} and the kinetic formulation proves to be essential in the fast diffusion case $m\in(0,1)$.

Previous methods to treat \eqref{intro_eq} have exploited the multiplicative structure of the noise, defining the weight
\begin{equation}\label{intro_weight}v(x,t)=\exp\left(-\sum_{k=1}^n\int_0^tf_k(x)\dd z^k_s\right),\end{equation}
to transform \eqref{intro_eq} to the form
\begin{equation}\label{eqn:intro-random-PDE}
  \partial_t(vu)=v(x,t)\Delta u^{[m]}.
\end{equation}
While this transformation removes the stochastic integral, the presence of the weight  $v$ complicates the application of the above methods and, in particular, the contractivity and the uniqueness of solutions to \eqref{intro_eq} with respect to the standard $L^1$-norm seem unclear. 

To overcome these difficulties, motivated by the work of the second author \cite{Gess1} in the porous media regime, we introduce a weighted $L^{1}$-norm with positive weight $\varphi\in\C^{\infty}(U)$ defined by
\begin{equation}\label{eq:intro_vp}
  \left\{ \begin{array}{ll}
  \Delta\varphi=-1 & \textrm{in}\;\;U,\\
  \varphi=0 & \textrm{on}\;\;\partial U.
  \end{array}\right.  
\end{equation}
The introduction of $v$ and $\varphi$ yields a formal proof of uniqueness:  Let $\abs{\cdot}^\ve$ denote a smooth approximation of the absolute value $\abs{\cdot}$, with derivative $\sgn^\ve$ and second-derivative $\delta^\ve_0$ approximating the sign function $\sgn$ and delta distribution $\delta_0$ respectively.  Then, 
$$
\partial_t\int_U \abs{u_1-u_2}^\ve v\varphi = \int_U\sgn^\ve (u_1-u_2)\Delta\left(u_1^{[m]}-u_2^{[m]}\right)v\varphi
$$
and, after integrating by parts,
\begin{equation}\label{intro_mot_1}\begin{aligned}
& \partial_t\int_U \abs{u_1-u_2}^\ve v\varphi = -\int_U\delta^\ve_0\left(u_1-u_2\right)\nabla\left(u_1-u_2\right)\cdot \nabla\left(u_1^{[m]}-u_2^{[m]}\right)v\varphi
\\ & \quad +\int_U\delta^\ve_0\left(u_1-u_2\right)\left(u^{[m]}_1-u^{[m]}_2\right)\nabla\left(u_1-u_2\right)\cdot\nabla(v\varphi)
\\ & \quad +\int_U\sgn^\ve\left(u_1-u_2\right)\left(u^{[m]}_1-u^{[m]}_2\right)\Delta(v\varphi).
\end{aligned}\end{equation}
Since the definitions of $v$ and $\varphi$ imply that for small times $\Delta(v\varphi)\leq 0$, the final term on the righthand side of \eqref{intro_mot_1} is for small times nonpositive in the $\ve\rightarrow 0$ limit.  The second term on the righthand side of \eqref{intro_mot_1} formally vanishes in the $\ve\rightarrow 0$ limit but, in the case $m\in(0,1)$, even this statement requires a more detailed analysis due to the fact that, for small values of $u_1$ and $u_2$,
$$\delta^\ve_0\left(u_1-u_2\right)\left(u^{[m]}_1-u^{[m]}_2\right)\simeq \ve^{-1}\mathbf{1}_{\{\abs{u_1-u_2} \le \ve\}}\abs{u_1-u_2}^m\simeq \ve^{m-1}\mathbf{1}_{\{\abs{u_1-u_2} \le \ve\}}.$$
We will focus for now on the first term on the righthand side of \eqref{intro_mot_1}.  We have formally that
\begin{equation}\label{intro_mot_3}\begin{aligned}
& \nabla\left(u_1-u_2\right)\cdot \nabla\left(u_1^{[m]}-u_2^{[m]}\right) = -m\left(\abs{u_1}^{\frac{m-1}{2}}-\abs{u_2}^{\frac{m-1}{2}}\right)^2\nabla u_1\cdot\nabla u_2
\\ & \quad + m\abs{u_1}^{m-1}\abs{\nabla u_1}^2+m\abs{u_2}^{m-1}\abs{\nabla u_2}^2 - 2m\abs{u_1}^{\frac{m-1}{2}}\abs{u_2}^\frac{m-1}{2}\nabla u_1\cdot\nabla u_2.
\end{aligned}\end{equation}
H\"older's and Young's inequalities prove the final line of \eqref{intro_mot_3} is nonnegative.  So, by the above, we expect to have that
\begin{equation}\label{intro_mot_4}\partial_t\int_U \abs{u_1-u_2}^\ve v\varphi \leq \int_U\delta^\ve_0\left(u_1-u_2\right)m\left(\abs{u_1}^{\frac{m-1}{2}}-\abs{u_2}^{\frac{m-1}{2}}\right)^2\nabla u_1\cdot\nabla u_2v\varphi.\end{equation}
If $m\in[3,\infty)$ or $m=1$ the righthand side vanishes in the $\ve\to 0$ limit, due to the local Lipshitz continuity of $\xi\mapsto\xi^\frac{m-1}{2}$. In contrast, this approach fails in the regime  $m\in(0,1)\cup(1,2)$.  If $m\in(1,2)$, the local H\"older continuity of the map $\xi\mapsto\xi^\frac{m-1}{2}$ proves at best that, for small value of $u_1$, $u_2$,
$$\delta^\ve_0\left(u_1-u_2\right)m\left(\abs{u_1}^{\frac{m-1}{2}}-\abs{u_2}^{\frac{m-1}{2}}\right)^2\simeq \ve^{m-2}\mathbf{1}_{\{\abs{u_1-u_2}\leq\ve\}},$$
and if $m\in(0,1)$ the map $\xi\mapsto\xi^\frac{m-1}{2}$ is singular near zero.  At this point it becomes evident why the previously developed methods in \cite{BR11b,BR15,BR17,Gess1,G15} failed in the case of the fast diffusion equation.

In the present work, this issue is resolved by exploiting the kinetic formulation of \eqref{eqn:intro-random-PDE}. By decomposing the random PDE \eqref{eqn:intro-random-PDE} according to the value of the solution $u$, the kinetic formulation allows to separate regions where the solutions are bounded away from zero, and therefore the nonlinearity $\xi\mapsto|\xi|^{\frac{m-1}{2}}$ is locally Lipschitz, and regions where the solutions are near zero.  The treatment of the latter region relies on fine estimates of commutator errors, which in turn build upon new and sharp regularity estimates of the type
$$ \nabla u^\frac{m}{2} \in L^2([0,T];L^2_{loc}(U)).$$
These new regularity properties and their usage in the proof of uniqueness via the kinetic formulation and commutator techniques are the basis for solving the previously inaccessible fast diffusion case $m\in(0,1)$.

We emphasize again that, in contrast to the case of conservative stochastic PDE treated in \cite{FehrmanGess}, the contractivity of the $L^1$-norm for \eqref{eqn:intro-random-PDE} is not expected.  This is due to the fact that the weight \eqref{intro_weight} appears on the level of the stochastic characteristics \eqref{c_forward_eq} below, which do not preserve the underlying Lebesgue measure as they do in \cite{FehrmanGess}.  Furthermore, the appearance of the weight $v$ in \eqref{eqn:intro-random-PDE} and the inclusion of the weight $\varphi$ in \eqref{eq:intro_vp} lead to additional commutator errors  in the proof of uniqueness (cf.\ e.g.\ the introduction of the cutoff \eqref{tu_1}, Steps 6-8 in the proof of Theorem~\ref{thm_unique}, and Lemma~\ref{lem_bdry} below) which are not present in \cite{FehrmanGess}, and which require careful analysis based on new regularity estimates. Furthermore, the boundary data in \eqref{intro_eq} leads to additional boundary layer errors that need to be treated in the proof of existence and uniqueness, which do not appear in the periodic setting of \cite[Theorem~4.1]{FehrmanGess}. The treatment of these terms and the proof of existence and uniqueness require new estimates based on $L^p$-norms weighted by $\varphi$, since the estimates used in \cite{FehrmanGess} fail near the boundary.

\subsection*{Notation} For $u\in \mathbb{R}$, we set $u^{[m]}:=|u|^{m-1}u$. Further, for $p\ge 1$ we let $L_{+}^{p}(U)$ be the space of a.e.\ nonnegative $L^{p}(U)$-functions. For simplicity we will sometimes use the convention $f_t(x,v):=f(t,x,v)$ for functions or measures $f$.  The notation will frequently omit the integration variables.  For example, for a measureable function $f:\mathbb{R}^d\rightarrow\mathbb{R}$, we will write
$$\int_{\mathbb{R}^d}f=\int_{\mathbb{R}^d}f(x)\dx.$$

\section{Definition and motivation of pathwise kinetic solutions}

In this section, we begin by considering a smooth, elliptic perturbation of \eqref{intro_eq} that is classically well-posed.  We will then derive a formulation of the equation that is well-defined for singular driving signals after passing to the kinetic form, where the noise enters as a linear transport, and passing to the limit with respect to the regularization.

For each $\ve\in(0,1)$, let $\rho^\ve_1$ denote a standard one-dimensional convolution kernel of scale $\ve$.   Define the smooth path
\begin{equation}\label{convolve_noise}z^\ve_t=(z^{1,\ve}_t,\ldots,z^{n,\ve}_t):=\int_\mathbb{R}z_{(s\vee0)}\rho^\ve_1(s-t)\ds.\end{equation}
We will write $\dot{z}^\ve$ for the time-derivative of the path $z^\ve$ and, for each $\eta\in(0,1)$ and $\ve\in(0,1)$, we consider the equation
\begin{equation}\label{reg_eq}\left\{\begin{array}{ll} \partial_tu^{\eta,\ve}=\Delta \left(u^{\eta,\ve}\right)^{[m]}+\eta\Delta u^{\eta,\ve}+\sum_{k=1}^nf_k(x)u^{\eta,\ve}\dot{z}^{k,\ve}_t & \textrm{in}\;\; U\times(0,\infty), \\ u^{\eta,\ve}=u_0 & \textrm{on}\;\; U \times\{0\}, \\ u^{\eta,\ve}=0 & \textrm{on}\;\;\partial U\times(0,\infty).\end{array}\right.\end{equation}
The following proposition establishes the well-posedness of \eqref{reg_eq}.  The proof is omitted, since it is only a small modification of \cite[Proposition~6.1]{FehrmanGess}.

\begin{prop}\label{reg_exists}  Let $\eta\in(0,1)$, $\ve\in(0,1)$, and $u_0\in L^2(U)$ be arbitrary.  There exists a unique solution $u^{\eta,\ve}$ of \eqref{reg_eq} satisfying, for each $T>0$, for $C=C(m,U,T,\ve)>0$,
$$\norm{u^{\eta,\ve}}^2_{L^\infty([0,T];L^2(U))}\leq C\norm{u_0}^2_{L^2(U)},$$
and
$$ \norm{\eta^\frac{1}{2}\nabla u^{\eta,\ve}}^2_{L^2([0,T];L^2(U;\mathbb{R}^d))} +\norm{\nabla \left(u^{\eta,\ve}\right)^{\left[\frac{m+1}{2}\right]}}^2_{L^2([0,T];L^2(U;\mathbb{R}^d))}\leq C\norm{u_0}_{L^2(U)}^2.$$
\end{prop}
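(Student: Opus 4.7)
The plan is to exploit the fact that, with $\ve$ fixed, the driving signal $z^{\ve}$ in \eqref{reg_eq} is smooth, so \eqref{reg_eq} reduces to a deterministic quasi-linear parabolic PDE via the Cole--Hopf-type substitution already anticipated in \eqref{intro_weight}. I would set $v^{\ve}_t(x) := \exp\bigl(-\sum_{k=1}^n f_k(x) z^{k,\ve}_t\bigr)$, which is strictly positive and smooth on $\overline{U}\times[0,T]$ with bounds depending on $\ve$, $T$, and $z$, and let $w := v^{\ve} u^{\eta,\ve}$. Using $\partial_t v^{\ve} = -v^{\ve}\sum_k f_k\dot{z}^{k,\ve}$ one checks that the noise term cancels and $w$ satisfies the uniformly parabolic, quasi-linear equation
\begin{equation*}
\partial_t w = v^{\ve}\,\Delta\!\left((v^{\ve})^{-m}|w|^{m-1}w\right) + \eta\, v^{\ve}\,\Delta\!\left((v^{\ve})^{-1}w\right)
\end{equation*}
with homogeneous Dirichlet data and initial condition $w(0)=u_0$. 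Since the coefficients are smooth and strictly positive on $\overline{U}\times[0,T]$, existence and uniqueness of a solution $w\in L^\infty([0,T];L^2(U))\cap L^2([0,T];H^1_0(U))$ follow from standard methods, e.g., Galerkin approximation in the Dirichlet eigenbasis of $-\Delta$ combined with the monotonicity of $\xi\mapsto \xi^{[m]}$ and the Aubin--Lions compactness lemma to identify the nonlinear limit. This is a minor modification of the argument in \cite[Proposition~6.1]{FehrmanGess}.

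For the quantitative estimates, I would test \eqref{reg_eq} directly with $u^{\eta,\ve}$, integrate by parts using the homogeneous boundary data, and invoke the chain-rule identity $|\nabla \xi^{[(m+1)/2]}|^2 = \tfrac{(m+1)^2}{4}|\xi|^{m-1}|\nabla\xi|^2$ to rewrite the nonlinear dissipation. This yields the energy equality
\begin{equation*}
\tfrac{1}{2}\tfrac{d}{dt}\|u^{\eta,\ve}\|_{L^2(U)}^2 + \tfrac{4m}{(m+1)^2}\|\nabla (u^{\eta,\ve})^{[(m+1)/2]}\|_{L^2(U)}^2 + \eta\|\nabla u^{\eta,\ve}\|_{L^2(U)}^2 = \sum_{k=1}^n\int_U f_k(x)(u^{\eta,\ve})^2\dot{z}^{k,\ve}_t\dx.
\end{equation*}
Since $\|\dot{z}^{\ve}\|_{L^\infty([0,T])}\leq C(\ve,T,z)$ and $f_k\in\C_b^2(U)$, the right-hand side is bounded by $C(\ve,T,z)\|u^{\eta,\ve}\|_{L^2(U)}^2$. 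Gr\"onwall's inequality then yields the $L^\infty_t L^2_x$ bound, and integrating the resulting inequality in time delivers both gradient estimates simultaneously.

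The main obstacle I anticipate is the rigorous justification of the chain-rule step in the fast diffusion regime $m\in(0,1)$, where $\xi\mapsto\xi^{[m]}$ is not Lipschitz at the origin. This is precisely the role of the artificial viscosity $\eta\Delta u^{\eta,\ve}$: for $\eta>0$ the Galerkin solutions enjoy the $H^1$-regularity required to perform the computation after first replacing $\xi\mapsto \xi^{[(m+1)/2]}$ by a smooth truncation $\xi\mapsto (\xi^2+\delta)^{(m-3)/4}\xi$ and then passing $\delta\to 0$ by monotone/dominated convergence, so that the quantitative bounds transfer from the regularized level back to \eqref{reg_eq}. A parallel approximation argument, together with the monotonicity of the porous medium / fast diffusion operator and the smoothness of $v^{\ve}$, delivers the pathwise uniqueness claim.
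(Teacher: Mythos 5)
The paper omits this proof, simply citing \cite[Proposition~6.1]{FehrmanGess}, so there is no in-paper argument to compare against. Your plan is the natural and correct one and is consistent with the approach the citation suggests: use the Cole--Hopf transformation to eliminate the smooth-in-time noise, establish existence for the resulting deterministic quasilinear parabolic PDE, and derive the quantitative bounds by energy methods applied directly to \eqref{reg_eq}. Your energy identity is exactly right, and the observation that the right-hand side has no $\eta$-dependence so that Gr\"onwall yields the estimates uniformly in $\eta$ (but with an $\ve$-dependent constant via $\norm{\dot{z}^{\ve}}_{L^\infty}$) is the whole point.

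Two small remarks. First, the regularization of the signed power has an arithmetical slip: to recover $\xi^{[(m+1)/2]}=|\xi|^{(m-1)/2}\xi$ as $\delta\to 0$ one should take $\xi\mapsto(\xi^2+\delta)^{(m-1)/4}\xi$, not $(\xi^2+\delta)^{(m-3)/4}\xi$; the latter would instead produce $|\xi|^{(m-3)/2}\xi=\xi^{[(m-1)/2]}$. Second, invoking ``monotonicity of $\xi\mapsto\xi^{[m]}$ plus Aubin--Lions'' to identify the nonlinear limit is a bit compressed: the porous-medium/fast-diffusion operator is monotone in the $H^{-1}$ pairing (i.e.\ $-\int_U(u_1^{[m]}-u_2^{[m]})(u_1-u_2)\le 0$), not with respect to the $H^1_0$-pairing, as the cross terms in \eqref{intro_mot_3} make clear. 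This does not affect the energy estimate, where one only needs the pointwise identity $\nabla u\cdot\nabla u^{[m]}=m|u|^{m-1}|\nabla u|^2\ge 0$, but the existence and uniqueness step should be carried out in the Gelfand triple $L^{m+1}(U)\subset H^{-1}(U)\subset (L^{m+1}(U))^*$ (or, leveraging the $\eta\Delta$ term, in $H^1_0\subset L^2\subset H^{-1}$ with the porous-medium part treated as a monotone perturbation), so as to make the Minty--Browder identification rigorous. With those two clarifications the argument is sound.
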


We will now derive the kinetic formulation of \eqref{reg_eq}.  The arguments are similar to \cite[Section~6]{FehrmanGess}, and a general introduction to the theory of kinetic solutions can be found in Perthame \cite{Perthame} and \cite{ChenPerthame}.  The kinetic function $\overline{\chi}:\mathbb{R}^2\rightarrow\{-1,0,1\}$ is defined by $\overline{\chi}(s,\xi)=\mathbf{1}_{\{0<\xi<s\}}-\mathbf{1}_{\{s<\xi<0\}}$.  For each $\eta\in(0,1)$ and $\ve\in(0,1)$, for the solution $u^{\eta,\ve}$ from Proposition~\ref{reg_exists}, we consider the composition
\begin{equation}\label{char_kinetic} \chi^{\eta,\ve}(x,\xi,t)=\overline{\chi}(u^{\eta,\ve}(x,t),\xi).\end{equation}
The kinetic function \eqref{char_kinetic} formally satisfies
\begin{equation}\label{char_kinetic_eq}\partial_t\chi^{\eta,\ve}= m\abs{\xi}^{m-1}\Delta \chi^{\eta,\ve}+\eta\Delta\chi^{\eta,\ve}-\partial_\xi \chi^{\eta,\ve}\sum_{k=1}^n \xi f_k(x)\dot{z}^\ve_t+\partial_\xi\left(p^{\eta,\ve}+q^{\eta,\ve}\right),\end{equation}
for the entropy defect measure
$$p^{\eta,\ve}(x,\xi,t):=\delta_0(\xi-u^{\eta,\ve}(x,t))\abs{\eta^\frac{1}{2}\nabla u^{\eta,\ve}(x,t)}^2,$$
and for the parabolic defect measure
$$ q^{\eta,\ve}(x,\xi,t):=\delta_0(\xi-u^{\eta,\ve}(x,t))\frac{4m}{(m+1)^2}\abs{\nabla \left(u^{\eta,\ve}\right)^{\left[\frac{m+1}{2}\right]}(x,t)}^2,$$
where $\delta_0$ denotes the one-dimensional Dirac distribution at the origin.  A small modification of \cite[Proposition~6.2]{FehrmanGess} proves that, on the level of distributions, this is indeed the case.

\begin{prop}\label{char_keq}  Let $\eta\in(0,1)$, $\ve\in(0,1)$, and $u_0\in L^2(U)$ be arbitrary.  Let $u^{\eta,\ve}$ denote the solution of \eqref{reg_eq} from Proposition~\ref{reg_exists}, and let $\chi^{\eta,\ve}$ denote its corresponding kinetic function defined in \eqref{char_kinetic}.  Then, $\chi^{\eta,\ve}$ is a distributional solution of \eqref{char_kinetic_eq} in the sense that, for each $\psi\in\C^\infty_c(U\times\mathbb{R}\times[0,\infty))$, for each $0\leq s<t<\infty$, 
\begin{equation}\label{char_keq_1}\begin{aligned} & \left.\int_{\mathbb{R}^{d+1}}\chi^{\eta,\ve}(x,\xi,r)\psi(x,\xi,r)\right|_{r=s}^t
\\ & = \int_s^t\int_\mathbb{R}\int_Um\abs{\xi}^{m-1}\chi^{\eta,\ve}\Delta \psi+\eta\chi^{\eta,\ve}\Delta\psi + \sum_{k=1}^n\int_s^t\int_{\mathbb{R}}\int_U \chi^{\eta,\ve}\partial_\xi\left(\psi \xi f_k(x)\dot{z}^{k,\ve}_r\right) \\ & \quad +\int_s^t\int_\mathbb{R}\int_U \chi^{\eta,\ve}\partial_t\psi  - \int_s^t\int_{\mathbb{R}}\int_U \left(p^{\eta,\ve}+q^{\eta,\ve}\right)\partial_\xi\psi.\end{aligned}\end{equation}
\end{prop}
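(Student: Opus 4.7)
The plan is to derive the kinetic equation \eqref{char_keq_1} by a direct chain-rule computation, exploiting the regularity of $u^{\eta,\ve}$ provided by Proposition~\ref{reg_exists} together with the smoothness of the driving signal $\dot{z}^{\ve}$. Given $\psi\in\C^\infty_c(U\times\mathbb{R}\times[0,\infty))$, I would let $S(x,\xi,r) := \int_0^\xi \psi(x,\zeta,r)\,\mathrm d\zeta$ and exploit the elementary identity
\[
S(x, u^{\eta,\ve}(x,r), r) \;=\; \int_\mathbb{R}\chi^{\eta,\ve}(x,\xi,r)\psi(x,\xi,r)\,\mathrm d\xi,
\]
which follows immediately from $\overline{\chi}(s,\xi) = \mathbf{1}_{\{0<\xi<s\}}-\mathbf{1}_{\{s<\xi<0\}}$. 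Differentiating $\int_U S(x,u^{\eta,\ve}(x,r),r)\,\mathrm d x$ in $r$, integrating on $[s,t]$, and substituting \eqref{reg_eq} for $\partial_t u^{\eta,\ve}$ reduces \eqref{char_keq_1} to processing three spatial integrals — one for each term on the right-hand side of \eqref{reg_eq} — with the explicit $r$-derivative of $\psi$ producing the $\int\chi^{\eta,\ve}\partial_t\psi$ term directly.

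For the noise contribution, the identity $u^{\eta,\ve}\psi(x,u^{\eta,\ve},r) = \int_\mathbb{R}\xi\psi(x,\xi,r)\,\delta_0(\xi-u^{\eta,\ve})\,\mathrm d\xi$ combined with $\delta_0(\xi-u^{\eta,\ve}) = \delta_0(\xi)-\partial_\xi\chi^{\eta,\ve}$, the vanishing of $\xi\psi$ at $\xi = 0$, and distributional integration by parts in $\xi$ reproduces the noise term of \eqref{char_keq_1}. For each of the two second-order terms I would integrate by parts twice in $x$; the compact support of $\psi$ in $U$ eliminates boundary contributions. The intermediate chain rule $\nabla_x[\psi(x,u^{\eta,\ve},r)] = (\nabla_x\psi)(x,u^{\eta,\ve},r)+(\partial_\xi\psi)(x,u^{\eta,\ve},r)\nabla u^{\eta,\ve}$ splits each integrand into a \emph{good} piece involving $(\nabla_x\psi)$, which after the second integration by parts and the identity $|\xi|^{m-1}\delta_0(\xi-u^{\eta,\ve}) = |u^{\eta,\ve}|^{m-1}\delta_0(\xi-u^{\eta,\ve})$ reassembles as $\int\chi^{\eta,\ve}(\eta+m|\xi|^{m-1})\Delta\psi\,\mathrm d\xi$, and a \emph{defect} piece involving $(\partial_\xi\psi)$ multiplied by $|\nabla u^{\eta,\ve}|^2$ or $m|u^{\eta,\ve}|^{m-1}|\nabla u^{\eta,\ve}|^2$. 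Using the algebraic identity
\[
\tfrac{4m}{(m+1)^2}\bigl|\nabla(u^{\eta,\ve})^{[(m+1)/2]}\bigr|^2 \;=\; m|u^{\eta,\ve}|^{m-1}|\nabla u^{\eta,\ve}|^2,
\]
which follows from $\nabla u^{[(m+1)/2]} = \tfrac{m+1}{2}|u|^{(m-1)/2}\nabla u$, the defect piece assembles exactly as $-\int(p^{\eta,\ve}+q^{\eta,\ve})\partial_\xi\psi$.

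The main subtlety is the validity of the pointwise chain rule when $m\in(0,1)$: the factor $|u^{\eta,\ve}|^{m-1}$ is singular at $\{u^{\eta,\ve}=0\}$, so the quantities $m|u^{\eta,\ve}|^{m-1}\nabla u^{\eta,\ve}$ and $m|u^{\eta,\ve}|^{m-1}|\nabla u^{\eta,\ve}|^2$ must be interpreted through $\nabla(u^{\eta,\ve})^{[(m+1)/2]}$, whose $L^2$-control is supplied by Proposition~\ref{reg_exists}. This is handled by first mollifying $\psi$ in a neighborhood of $\xi = 0$, performing the chain rule and integrations by parts in the classical sense, and then passing to the limit in the mollification; the regularity of $(u^{\eta,\ve})^{[(m+1)/2]}$ ensures integrability of every term that appears. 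Once this technical point is addressed, the computation reduces to the one carried out in \cite[Proposition~6.2]{FehrmanGess}, and the identities collected above yield \eqref{char_keq_1}.
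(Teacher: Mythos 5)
Your proposal is correct and follows the standard kinetic-formulation derivation that the paper itself invokes by citing [FehrmanGess, Proposition~6.2]; the paper omits the proof, so your detailed sketch is doing essentially the work the authors outsource to that reference. The chain-rule/splitting scheme you use (testing $\int_U S(x,u^{\eta,\ve},r)\,\dx$, substituting the PDE, integrating by parts once in $x$, splitting via $\nabla_x[\psi(x,u,r)]=(\nabla_x\psi)+(\partial_\xi\psi)\nabla u$, and reassembling the good piece through the divergence-of-a-composite identity $m|u|^{m-1}(\nabla_x\psi)(x,u,r)\cdot\nabla u=\nabla_x\cdot\bigl[\int_0^u m|\zeta|^{m-1}(\nabla_x\psi)(x,\zeta,r)\,\dd\zeta\bigr]-\int_0^u m|\zeta|^{m-1}(\Delta_x\psi)(x,\zeta,r)\,\dd\zeta$) is exactly what produces the $m|\xi|^{m-1}\chi^{\eta,\ve}\Delta\psi$ and $\eta\chi^{\eta,\ve}\Delta\psi$ terms, while the defect pieces identify with $p^{\eta,\ve}$ and $q^{\eta,\ve}$ via the square-gradient identity you state. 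You correctly flag the only real subtlety — interpreting $|u^{\eta,\ve}|^{m-1}\nabla u^{\eta,\ve}$ for $m\in(0,1)$ — though the usual device is to cut off the weight $m|\xi|^{m-1}$ (or equivalently insist $\psi$ vanish near $\xi=0$) and then pass to the limit using integrability of $|\xi|^{m-1}$ near the origin, rather than to ``mollify $\psi$'' (which is already smooth); the conclusion is the same because $\chi^{\eta,\ve}$ is bounded and $m-1>-1$ ensures the weight is locally integrable in $\xi$.
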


We will now obtain an interpretation of \eqref{char_keq_1} that is well-defined for rough driving signals.  The idea, which is motivated by the theory of stochastic viscosity solutions \cite{LSstoch5,LSstoch4,LSstoch3,LSstoch2,LSstoch1}, is to consider a class of test function that are transported by the stochastic characteristic defining the linear transport term appearing in \eqref{char_kinetic_eq}.  In comparison to \cite{FehrmanGess,GessSouganidis,GessSouganidis2}, however, the application of these ideas is complicated by the fact that the flow of the characteristics \eqref{c_forward} below does not preserve the Lebesgue measure.

The forward characteristic $\Xi^{\ve,x,\xi}_{s,t}$ beginning from time $s\geq 0$ and $(x,\xi)\in U\times\mathbb{R}^d$ is the solution to
\begin{equation}\label{c_forward_eq} \left\{\begin{array}{ll} \dot{\Xi}^{\ve,x,\xi}_{s,t}=\sum_{k=1}^n\Xi^{\ve,x,\xi}_{s,t}f_k(x)\dot{z}^{k,\ve}_{s,t} & \textrm{in}\;\;(s,\infty), \\ \Xi^{\ve,x,\xi}_{s,s}=\xi, & \end{array}\right.\end{equation}
for $z^{k,\ve}_{s,t}:=z^{k,\ve}_t-z^{k,\ve}_s.$  The linearity implies that \eqref{c_forward_eq} has the explicit solution
\begin{equation}\label{c_forward}\Xi^{\ve,x,\xi}_{s,t}=\xi\exp\left(\sum_{k=1}^nf_k(x)z^{k,\ve}_{s,t}\right).\end{equation}
The inverse characteristic is constructed explicitly and is defined for each $t\geq 0$ and $s\in[0,t]$ by
\begin{equation}\label{c_backward} \Pi^{\ve,x,\xi}_{t,t-s}:=\xi \exp\left(-\sum_{k=1}^nf_k(x)z^{k,\ve}_{s,t}\right)=\xi \exp\left(\sum_{k=1}^nf_k(x)z^{k,\ve}_{t,s}\right).\end{equation}
Indeed, it follows by definition that the characteristics \eqref{c_forward} and \eqref{c_backward} are mutually inverse in the sense that, for each $0\leq s \leq t<\infty$, for each $(x,\xi)\in U\times\mathbb{R}$,
\begin{equation}\label{c_inverse} \Xi^{\ve,x,\Pi^{\ve,x,\xi}_{t,t-s}}_{s,t}=\xi\;\;\textrm{and}\;\; \Pi^{\ve,x,\Xi^{\ve,x,\xi}_{t-s,s}}_{t,s}=\xi.\end{equation}

We aim to study the equation satisfied by the transported kinetic function $\chi^{\eta,\ve}(x,\Xi^{\ve,x,\xi}_{s,t},t)$.  Technically, this is achieved by testing \eqref{char_keq_1} with test functions transported along the inverse characteristics \eqref{c_backward}.  This transport and the corresponding change of measure are described, for an arbitrary $\rho_0\in \C^\infty_c(U\times\mathbb{R})$ and $s\geq 0$, by the solution
\begin{equation}\label{c_flow} \left\{\begin{array}{ll} \partial_t \rho^\ve_{s,t}=-\partial_\xi\left(\rho^\ve_{s,t}\xi \sum_{k=1}^nf_k(x)\dot{z}^{k,\ve}_t\right) & \textrm{in}\;\;U\times\mathbb{R}\times(s,\infty), \\ \rho^\ve_{s,s}=\rho_0 & \textrm{on}\;\;U\times\mathbb{R}\times\{s\},\end{array}\right.\end{equation}
which is the conservative equation dual to the linear transport term in \eqref{char_kinetic_eq}.  The weight, defined for each $0\leq s<t<\infty$ and $x\in U$,
\begin{equation}\label{c_weight} v^\ve_{s,t}(x):=\left(\partial_\xi \Xi^{\ve,x,\xi}_{s,t}\right)^{-1}=\exp\left(-\sum_{k=1}^nf_k(x)z^{k,\ve}_{s,t}\right),\end{equation}
defines the change of measure introduced by transport along the characteristics.  That is, for each $0\leq s\leq t<\infty$, for each $\psi\in\C^\infty_c(U\times\mathbb{R})$,
$$\int_\mathbb{R}\int_U\psi(x,\Xi^{\ve,x,\xi}_{s,t})\dx\dxi=\int_\mathbb{R}\int_U\psi(x,\xi) v^\ve_{s,t}(x)\dx\dxi.$$
It is essential to our analysis that $v^{\ve}$ does not depend on the velocity variable $\xi\in\mathbb{R}$.

It follows from \eqref{c_flow} that, for each $s\geq 0$, the product
\begin{equation}\label{c_tilde}\tilde{\rho}^\ve_{s,t}(x,\xi):=\rho^\ve_{s,t}(x,\xi)v^\ve_{s,t}(x)^{-1},\end{equation}
satisfies a pure transport equation
$$\left\{\begin{array}{ll} \partial_t \tilde{\rho}^\ve_{s,t}=-\xi\partial_\xi\tilde{\rho}^\ve_{s,t}\sum_{k=1}^nf_k\dot{z}^\ve_t & \textrm{in}\;\;U\times\mathbb{R}\times(s,\infty), \\ \tilde{\rho}^\ve_{s,s}=\rho_0 & \textrm{on}\;\;U\times\mathbb{R}\times\{s\}.\end{array}\right.$$
Therefore, in view of \eqref{c_backward}, an explicit computation proves that
$$\tilde{\rho}^\ve_{s,t}(x,t)=\rho_0\left(x,\Pi^{\ve,x,\xi}_{t,t-s}\right).$$
Returning to \eqref{c_tilde}, for each $s\geq 0$, it follows that the solution to \eqref{c_flow} is given by
\begin{equation}\label{c_flow_1} \rho^\ve_{s,t}(x,\xi)=\rho_0\left(x,\Pi^{\ve,x,\xi}_{t,t-s}\right)v^\ve_{s,t}(x).\end{equation}

The following proposition proves that the transport of test functions by equation \eqref{c_flow}, in the sense of transport by the inverse characteristics \eqref{c_backward} and weighting by the change of variables factor \eqref{c_weight}, cancels the noise appearing in \eqref{char_keq_1}.  The proof is a small modification of \cite[Proposition~3.3]{FehrmanGess}, and is therefore omitted.

\begin{prop}\label{c_eq}  Let $\eta\in(0,1)$, $\ve\in(0,1)$, and $u_0\in L^2(U)$ be arbitrary.  Let $u^{\eta,\ve}$ denote the solution from Proposition~\ref{reg_exists}, and let $\chi^{\eta,\ve}$ denote the corresponding kinetic function from \eqref{char_kinetic}.  For each $0\leq s<t<\infty$ and $\rho_0\in\C^\infty_c(U\times\mathbb{R})$, for $\rho^\ve_{s,r}$ defined in \eqref{c_flow_1},
\begin{equation}\label{c_eq_1}\left.\int_\mathbb{R}\int_U\chi^{\eta,\ve}\rho^\ve_{s,r}\right|_{r=s}^t =  \int_s^t\int_\mathbb{R}\int_U m\abs{\xi}^{m-1}\chi^{\eta,\ve}\Delta\rho^\ve_{s,r}+\eta\chi^{\eta,\ve}\Delta\rho^\ve_{s,r}- \int_s^t\int_\mathbb{R}\int_U \left(p^{\eta,\ve}+q^{\eta,\ve}\right)\partial_\xi \rho^\ve_{s,r}.\end{equation}
\end{prop}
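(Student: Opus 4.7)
The plan is to insert $\psi(x,\xi,r):=\rho^\ve_{s,r}(x,\xi)$ as a test function into the distributional identity \eqref{char_keq_1} furnished by Proposition~\ref{char_keq}. Since the mollified path $z^\ve$ is smooth, the explicit formulas \eqref{c_forward}--\eqref{c_backward} define smooth flows, and the representation \eqref{c_flow_1} yields $\rho^\ve_{s,r}\in\C^\infty([s,t];\C^\infty_c(U\times\mathbb{R}))$: the $x$-support is inherited from $\rho_0$ and stays compact in $U$, while the $\xi$-support is merely rescaled by the bounded factor $\exp(\sum_k f_k(x)z^{k,\ve}_{t,s})$ and so remains compact uniformly in $r\in[s,t]$. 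To fit exactly the framework of Proposition~\ref{char_keq}, which requires $\psi\in\C^\infty_c(U\times\mathbb{R}\times[0,\infty))$, one multiplies $\rho^\ve_{s,r}$ by a smooth time cutoff equal to $1$ on $[s,t]$ and later uses the left-hand endpoint-difference formulation to remove it.

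Once the substitution is justified, the computation is essentially algebraic. The transport equation \eqref{c_flow} is, by construction, the formal adjoint of the linear noise term in \eqref{char_kinetic_eq}, namely
$$\partial_r\rho^\ve_{s,r}=-\partial_\xi\Bigl(\rho^\ve_{s,r}\,\xi\sum_{k=1}^n f_k(x)\dot{z}^{k,\ve}_r\Bigr).$$
Plugging $\psi=\rho^\ve_{s,r}$ into \eqref{char_keq_1}, the time-derivative contribution $\int_s^t\int_\mathbb{R}\int_U\chi^{\eta,\ve}\partial_r\psi$ equals exactly the negative of the transport contribution $\sum_k\int_s^t\int_\mathbb{R}\int_U\chi^{\eta,\ve}\partial_\xi(\psi\,\xi f_k(x)\dot{z}^{k,\ve}_r)$. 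These two terms cancel identically, which is the whole purpose of transporting the test function along the stochastic characteristics. What remains on the right-hand side is precisely the two Laplacian contributions and the defect-measure term that appear in \eqref{c_eq_1}, so the identity follows.

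I do not expect a substantive analytical obstacle: the entire content of the proposition is the cancellation of the $\partial_r\psi$ and noise terms, which reflects the duality built into \eqref{c_flow}. The only care required is the admissibility of $\rho^\ve_{s,r}$ as a test function, which is resolved by the smoothness of $z^\ve$ and a routine time-cutoff approximation, together with tracking that the transported support remains compactly inside $U\times\mathbb{R}$ uniformly in $r\in[s,t]$. Because this is a small modification of \cite[Proposition~3.3]{FehrmanGess}, the authors omit the details; our proposal above reproduces the structural reason why the statement is true.
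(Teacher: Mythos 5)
Your proposal is correct and reflects exactly the argument the authors have in mind: substitute $\psi=\rho^\ve_{s,r}$ into \eqref{char_keq_1}, observe that the transport relation \eqref{c_flow} makes the $\partial_r\psi$ term cancel the noise term identically (this duality being the entire point of the construction), and note that the explicit formula \eqref{c_flow_1} together with the boundedness of $\exp(\sum_k f_k(\cdot)z^{k,\ve}_{t,s})$ keeps the support compactly contained in $U\times\mathbb{R}$ uniformly on $[s,t]$, so $\rho^\ve_{s,r}$ is an admissible test function after a harmless time cutoff. This is the same route the authors attribute to \cite[Proposition~3.3]{FehrmanGess}, which is why they omit the details.
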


The essential observation is that equation \eqref{c_eq_1} is well-defined in the singular limit $\ve\rightarrow 0$.  In particular, we have the forward and backward characteristics, defined for each $0\leq s<t<\infty$ and $(x,\xi)\in U\times\mathbb{R}$,
\begin{equation}\label{c_char} \Xi^{x,\xi}_{s,t}:=\xi\exp\left(\sum_{k=1}^n f_k(x) z^k_{s,t}\right)\;\;\textrm{and}\;\;\Pi^{x,\xi}_{t,t-s}:=\xi\exp\left(\sum_{k=1}^nf_k(x)z^k_{t,s}\right),\end{equation}
and the change of variables factor
\begin{equation}\label{c_w} v_{s,t}(x,\xi):=\left(\partial_\xi \Xi^{x,\xi}_{s,t}\right)^{-1}=\exp\left(-\sum_{k=1}^n f_k(x) z^k_{s,t}\right).\end{equation}
For each $\rho_0\in \C^\infty_c(U\times\mathbb{R})$, for each $0\leq s<t<\infty$ and $(x,\xi)\in U\times\mathbb{R}$, we define
\begin{equation}\label{c_f} \rho_{s,t}(x,\xi):=\rho_0\left(x,\Pi^{x,\xi}_{t,t-s}\right)v_{s,t}(x),\end{equation}
which is formally the solution of \eqref{c_flow} driven by singular noise.  We are now prepared to present the definition of a pathwise kinetic solution to \eqref{intro_eq}.

\begin{definition}\label{sol_def} Let $u_0\in L^2(U)$.  A \emph{pathwise kinetic solution} of \eqref{intro_eq} with initial data $u_0$ is a function $u\in L^\infty_{\textrm{loc}}([0,\infty);L^2(U))$ that satisfies the following two properties.

(i)  For each $T>0$, 
$$u^{\left[\frac{m+1}{2}\right]}\in L^2([0,T];H^1_0(U)).$$
In particular, for each $T>0$, the parabolic defect measure
$$q(x,\xi,t):=\frac{4m}{(m+1)^2}\delta_0(\xi-u(x,t))\abs{\nabla u^{\left[\frac{m+1}{2}\right]}}^2,$$
is finite on $U\times\mathbb{R}\times (0,T)$.

(ii)  For the kinetic function $\chi(x,\xi,t):=\overline{\chi}(u(x,t),\xi),$ there exists a nonnegative entropy defect measure $p$ on $U\times\mathbb{R}\times(0,\infty)$, which is finite on $U\times\mathbb{R}\times(0,T)$ for each $T>0$,  and a subset $\mathcal{N}\subset(0,\infty)$ of Lebesgue measure zero such that, for every $s<t\in[0,\infty)\setminus\mathcal{N}$, for every $\rho_0\in\C^\infty_c(U\times\mathbb{R})$, for $\rho_{s,r}$ defined in \eqref{c_f},
\begin{equation}\label{transport_equation}\left.\int_\mathbb{R}\int_U\chi\rho_{s,r}\dx\dxi\right|_{r=s}^t = \int_s^t\int_\mathbb{R}\int_U m\abs{\xi}^{m-1}\chi\Delta \rho_{s,r}\dx\dxi\dr - \int_s^t\int_\mathbb{R}\int_U \left(p+q\right) \partial_\xi \rho_{s,r}\dx\dxi\dr,\end{equation}
where the initial condition is attained in the sense that, when $s=0$,
$$\int_\mathbb{R}\int_U\chi(x,\xi,0)\rho_{0,0}(x,\xi)\dx\dxi=\int_\mathbb{R}\int_U\overline{\chi}(u_0(x),\xi)\rho_0(x,\xi)\dx\dxi.$$
\end{definition}

We observe that property (i) of Definition~\ref{sol_def} implies that the boundary condition is attained in the sense that $u^{\left[\nicefrac{m+1}{2}\right]}$ has a vanishing trace on the boundary.  In the proof of uniqueness, we will require the more standard assumption that $u^{\left[m\right]}$ has vanishing trace on the boundary.  The equivalence of these two conditions is proven in Lemma~\ref{lem_bdry} of the appendix.  Finally, we will use the fact that the solutions satisfy the following integration by parts formula:  for each $\psi\in\C^\infty_c(U\times\mathbb{R}\times[0,\infty))$, for each $t\geq 0$,
\begin{equation}\label{equation_ibp}
\int_0^t\int_\mathbb{R}\int_U \frac{m+1}{2}\abs{\xi}^\frac{m-1}{2}\chi(x,\xi,r) \nabla\psi(x,\xi,r)\dx\dxi\dr = -\int_0^t\int_U \nabla u^{\left[\frac{m+1}{2}\right]}\psi(x,u(x,r),r)\dx\dr,
\end{equation}
which is a consequence of Definition~\ref{sol_def} and \cite[Lemma~3.6]{FehrmanGess}.

\section{Uniqueness of pathwise kinetic solutions}

The proof of uniqueness is based on the following formal calculation.  Let $\varphi\in\C^\infty(U)$ be the positive function defined by
\begin{equation}\label{u_aux} \left\{\begin{array}{ll} \Delta\varphi=-1 & \textrm{in}\;\;U, \\ \varphi=0 & \textrm{on}\;\;\partial U.\end{array}\right.\end{equation}
Let $u^1,u^2$ be pathwise kinetic solutions of \eqref{intro_eq}, and let $\chi^1$ and $\chi^2$ denote the corresponding kinetic functions.  Since properties of the kinetic function prove that
\begin{equation}\label{u_formal_0} \int_U\abs{u^1-u^2}\varphi v_{0,t} = \int_\mathbb{R}\int_U\left(\sgn(\xi)\chi^1+\sgn(\xi)\chi^2-2\chi^2\chi^2\right)\varphi v_{0,t},\end{equation}
it formally follows from the equation and the nonnegativity of the entropy defect measures that, for the parabolic defect measures $\{q^i\}_{i\in\{1,2\}}$,
\begin{equation}\label{u_formal}\begin{aligned} \partial_t\int_U\abs{u^1-u^2}v_{0,t} \leq & \int_\mathbb{R}\int_Um\abs{\xi}^{m-1}\abs{\chi^1-\chi^2}^2 \Delta (\varphi v_{0,t}) \\  & +4\int_\mathbb{R}\int_Um\abs{\xi}^{m-1}\nabla\chi^1\cdot \nabla\chi^2\varphi v_{0,t}\\ & -2\int_\mathbb{R}\int_U \left(q^1(x,\xi,u^2(x,t))+ q^2(x,\xi,u^1(x,t))\right)\varphi v_{0,t}.\end{aligned}\end{equation}
For sufficiently small times we have $\Delta(v_{0,t}\varphi)\leq 0$, which implies formally that the first term on the righthand side of \eqref{u_formal} is nonpositive.  The integration by parts formula \eqref{equation_ibp}, the definition of the parabolic defect measures, H\"older's inequality, and Young's inequality then imply formally that the sum of the second and third terms on the righthand side of \eqref{u_formal} is nonpositive.

We emphasize, however, that the rigorous justification of the cancellations observed in \eqref{u_formal} have remained an open question in the fast diffusion regime $m\in(0,1)$ due to the commutator errors described in \eqref{intro_mot_4} of the introduction.  Indeed, a regularization is necessary to justify the above computations, in which repeatedly the ill-defined product of Dirac delta distributions appears.  And, on the level of the convolution, the stochastic characteristics link the spatial and velocity variables, see \eqref{c_f}, and thereby introduce commutator errors that must be handled using new error estimates up to the boundary (cf.\ Propositions~\ref{aux_p} and \ref{aux_log} below).  Precisely, the optimal regularity estimates of \cite{FehrmanGess} degenerate near the boundary and, for this reason, we prove in Proposition~\ref{aux_log} below optimal estimates localized to the interior of the domain.  These estimates play an essential role throughout the proof of uniqueness, including to treat a  boundary layer that does not appear in the periodic framework of \cite{FehrmanGess}.

\begin{remark}  In the proof of Theorem~\ref{thm_unique} and for the remainder of the paper, after applying the integration by parts formula \eqref{equation_ibp}, we will frequently encounter derivatives of functions $f(x,\xi,r):U\times\mathbb{R}\times[0,\infty)\rightarrow\mathbb{R}$ evaluated at $\xi=u(x,r)$.  In order to simplify the notation, we make the convention that
$$\nabla_xf(x,u(x,r),r)=\left.\nabla_xf(x,\xi,r)\right|_{\xi=u(x,r)},$$
and analogous conventions for all possible derivatives.  That is, in every case, the notation indicates the derivative of $f$ evaluated at $(x,u(x,r),r)$ as opposed to the derivative of the full composition.  
\end{remark}

\begin{thm}\label{thm_unique}  Let $u_0^1,u_0^2\in L^2_+(U)$ be arbitrary, and let $u^1$ and $u^2$ be pathwise kinetic solutions of \eqref{intro_eq} in the sense of Definition~\ref{sol_def} with initial data $u_0^1$ and $u_0^2$ respectively.  Then, for $\varphi$ satisfying \eqref{u_aux}, for each $T>0$, there exists $C=C(m,U,T,z)>0$ such that
$$\norm{\left(u^1-u^2\right)\varphi}_{L^\infty\left([0,T],L^1(U)\right)}\leq C\norm{\left(u_0^1-u_0^2\right)\varphi}_{L^1(U)}.$$
\end{thm}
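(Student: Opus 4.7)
The plan is to make rigorous the formal computation \eqref{u_formal} sketched above, via a doubling-of-variables argument executed on the level of the kinetic formulation. Starting from Proposition~\ref{c_eq}, which in the limit $\ve\to 0$ becomes the transport-tested identity \eqref{transport_equation} of Definition~\ref{sol_def}, I would regularize each kinetic function $\chi^i$ by convolution with kernels of scale $\beta$ in the spatial variable and $\gamma$ in the velocity variable, further multiplied by a boundary cutoff $\psi_\alpha$ of the form \eqref{tu_1} which vanishes in an $\alpha$-neighborhood of $\partial U$ in order to contain the boundary effects produced by $\varphi$ and $v_{0,t}$. Testing the equation for $\chi^1$ against a transport of the mollified $\chi^2$, testing the equation for $\chi^2$ against that of $\chi^1$, and summing, I would obtain a regularized version of the differential identity for $\int_U|u^1-u^2|\varphi v_{0,t}$ arising from the kinetic identity \eqref{u_formal_0}, with the stochastic transport terms exactly cancelled by construction.

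I would then organize the resulting expression into three groups. The leading term $\int m\abs{\xi}^{m-1}\abs{\chi^1-\chi^2}^2\Delta(\varphi v_{0,t})$ is handled using $\Delta\varphi=-1$ together with the explicit formula \eqref{c_w}: this gives $\Delta(\varphi v_{0,t})\le 0$ on a short initial interval, and on arbitrary $[0,T]$ its positive part is pointwise dominated by $C(T,z)\varphi v_{0,t}$, which becomes the Gronwall driver at the end. The cross term $4\int m\abs{\xi}^{m-1}\nabla\chi^1\cdot\nabla\chi^2\,\varphi v_{0,t}$ is rewritten via the integration by parts formula \eqref{equation_ibp} in terms of $\nabla u^{\left[\frac{m+1}{2}\right]}$ composed with the other solution; combined with the parabolic defect measures $q^i$, the result assembles into the algebraic form \eqref{intro_mot_3}, whose sign is controlled by Young's inequality so that, modulo commutator errors, the sum is nonpositive.

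The principal obstacle, exactly as flagged in the introduction, is controlling the commutator errors in the fast diffusion regime $m\in(0,1)\cup(1,2)$, where $\xi\mapsto\abs{\xi}^{\frac{m-1}{2}}$ is only H\"older on bounded sets and singular at the origin. I would split $U\times\mathbb{R}$ into the region $\{\abs{u^1},\abs{u^2}\ge\delta\}$ and its complement. On the good region, local Lipschitz continuity of $\abs{\cdot}^{\frac{m-1}{2}}$ makes the standard mollifier commutators vanish as $\beta,\gamma\to 0$. On the complement, the new sharp interior regularity $\nabla u^{m/2}\in L^2([0,T];L^2_{\textrm{loc}}(U))$ supplied by Proposition~\ref{aux_log}, together with the weighted commutator estimates of Proposition~\ref{aux_p}, shows that the contribution still vanishes as $\delta\to 0$. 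The weight $\varphi$ is tailored to compensate for the loss of regularity near $\partial U$, while the cutoff $\psi_\alpha$ and the boundary trace equivalence between $u^{[m]}$ and $u^{\left[\frac{m+1}{2}\right]}$ established in Lemma~\ref{lem_bdry} control the additional boundary-layer terms produced when derivatives fall on $\varphi v_{0,t}$; these correspond to the Steps 6--8 referenced in the outline above.

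Passing the parameters to zero in the order $\gamma\to 0$, $\beta\to 0$, $\alpha\to 0$, and finally $\delta\to 0$, and using \eqref{u_formal_0} to recognize the resulting kinetic expression as $\int_U\abs{u^1-u^2}\varphi v_{0,t}$, I expect a differential inequality of the form
\[
\partial_t\int_U\abs{u^1-u^2}\varphi v_{0,t} \le C(T,z)\int_U\abs{u^1-u^2}\varphi v_{0,t},
\]
after which Gronwall's lemma together with the uniform two-sided bounds on $v_{0,t}$ on $[0,T]$ yields the stated weighted $L^1$ contraction. The hardest technical step will be the third one: coordinating the cutoff $\psi_\alpha$, the weight $\varphi$, the new interior regularity of Proposition~\ref{aux_log}, and the commutator analysis so that every error term vanishes in the iterated limit while the sign of the leading dissipation survives.
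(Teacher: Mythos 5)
Your proposal captures the overall architecture — doubling of variables on the kinetic level, boundary cutoff, the decomposition of $\{|u^1|,|u^2|\}$ into near-zero and bounded-away regions combined with Propositions~\ref{aux_log} and~\ref{aux_p}, and the use of Lemma~\ref{lem_bdry} for the boundary layer — but it is missing the single device that makes the commutator estimates close: the \emph{time-splitting}. In the paper's argument, the interval $[0,T]$ is divided by a partition $\mathcal P=\{t_0<\cdots<t_N\}$ and the transport test functions $\rho_{t_i,r}$ are restarted at each $t_i$ (Step~1). This matters because the commutator errors $\mathrm{Err}^{j,1},\dots,\mathrm{Err}^{j,4}$ generated by the stochastic characteristics involve differences such as $\nabla_x\Pi^{x,\xi}_{r,r-t_i}-\nabla_{x'}\Pi^{x',\xi'}_{r,r-t_i}$ and $\partial_\xi\Pi^{x,\xi}_{r,r-t_i}-\partial_{\xi'}\Pi^{x',\xi'}_{r,r-t_i}$; after paying the $\ve^{-1}$ cost of the mollifier derivative, the bounds \eqref{tu_25}--\eqref{tu_27} give a factor $\omega(|t_{i+1}-t_i|;T)$ rather than an extra power of $\ve$. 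These errors do \emph{not} vanish as the convolution scales alone go to zero — one only obtains $\omega(|t_{i+1}-t_i|;T)$ times the mass of $(p^j+q^j)$ and its singular moments (cf.\ \eqref{tu_31}). Sending the mesh $|\mathcal P|\to 0$ is therefore indispensable, and it has to occur \emph{after} the $\ve\to 0$ limit but before removing the boundary cutoff. Your proposed limit sequence $\gamma\to 0$, $\beta\to 0$, $\alpha\to 0$, $\delta\to 0$ contains no such step, so the transport-commutator terms survive as $O(\omega(T;T))$ errors that are not controlled by the $L^1$ distance of $u^1$ and $u^2$, and the claimed contraction does not follow.

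A second, smaller issue concerns the endgame. You propose to dominate the positive part of $\Delta(\varphi v_{0,t})$ by $C(T,z)\,\varphi\, v_{0,t}$ and run Gronwall. Writing $\Delta(\varphi v_{0,t})=-v_{0,t}+2\nabla\varphi\cdot\nabla v_{0,t}+\varphi\Delta v_{0,t}$, the cross term $2\nabla\varphi\cdot\nabla v_{0,t}$ is comparable to $|z_{0,t}|\,v_{0,t}$ (since $\varphi$ is Lipschitz up to $\partial U$) and does not vanish as $x\to\partial U$, whereas $\varphi v_{0,t}\to 0$; so the claimed pointwise domination fails near the boundary once $t$ is not small. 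The paper's resolution is to prove the contraction on a short interval $[0,t_*]$ on which $\Delta(\varphi v_{0,t})\le 0$ (using $\Delta\varphi=-1$ to absorb the cross term for small $t$), and then iterate, restarting the weight from $v_{k\tilde t,\cdot}$ on successive subintervals. Your argument should be amended accordingly: replace the Gronwall step by the short-time sign condition \eqref{tu_44}/\eqref{tu_46} plus induction.
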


\begin{proof}  Let $u_0^1,u_0^2\in L^2_+(U)$, and suppose that $u^1$ and $u^2$ are pathwise kinetic solutions in the sense of Definition~\ref{sol_def} with initial data $u_0^1$ and $u_0^2$ respectively.  For each $i\in\{1,2\}$, we will write $\chi^i$ for the kinetic function and $(p^i,q^i)$ for the corresponding entropy and parabolic defect measures.

For each $\ve\in(0,1)$, let $\rho^\ve_d$ and $\rho^\ve_1$ be standard $d$-dimensional and $1$-dimensional convolution kernels on scale $\ve$.  For each $s\geq 0$, for the backward characteristics \eqref{c_char} and the weight defined in \eqref{c_w}, we define the smoothed transported kinetic function, for $(y,\eta,t)\in U_\ve\times\mathbb{R}\times[s,\infty)$,
\begin{equation}\begin{aligned}\label{tu_0} \tilde{\chi}_{s,t}^{i,\ve}(y,\eta):= & \int_\mathbb{R}\int_U\chi^i_t(x,\Xi^{x,\xi}_{s,t})\rho^\ve_d(x-y)\rho^\ve_1(\xi-\eta)\dx\dxi \\ = & \int_\mathbb{R}\int_{U}\chi^i_t(x,\xi)v_{s,t}(x)\rho^\ve_d(x-y)\rho^\ve_1\left(\Pi^{x,\xi}_{t,t-s}-\eta\right)\dx\dxi,\end{aligned}\end{equation}
where
\begin{equation}\label{tu_01}U_\ve:=\{\;x\in U\;|\;\dd(x,\partial U)>\ve\;\}.\end{equation}
In particular, for each $(y,\eta,t)\in U_\ve\times\mathbb{R}\times[s,\infty)$,
\begin{equation}\label{tu_0101}\lim_{\ve\rightarrow 0} \tilde{\chi}_{s,t}^{i,\ve}(y,\eta)=\tilde{\chi}^i_{s,t}(y,\eta)=\chi^i(y,\Xi^{y,\eta}_{s,t},t).\end{equation}
We observe that for each fixed $(y,\eta)\in U_\ve\times\mathbb{R}$ and $s\leq t\in(0,\infty)\setminus\mathcal{N}^i$, equation \eqref{transport_equation} can be applied to \eqref{tu_0}.  In what follows, to simplify the notation, we define, for each $\ve\in(0,1)$ and $s\geq 0$,
\begin{equation}\label{tu_00} \rho^\ve_{s,t}(x,y,\xi,\eta):=\rho^\ve_d(x-y)\rho^\ve_1\left(\Pi^{x,\xi}_{t,t-s}-\eta\right),\end{equation}
so that
$$\tilde{\chi}_{s,t}^{i,\ve}(y,\eta)= \int_\mathbb{R}\int_{U}\chi^i_t(x,\xi)v_{s,t}(x)\rho^\ve_{s,t}(x,y,\xi,\eta)\dx\dxi.$$

It is furthermore necessary to introduce a cutoff along the boundary of $U$, since the application of equation \eqref{transport_equation} to \eqref{tu_0} is only defined for points $y\in U$ of distance greater than $\ve\in(0,1)$ from the boundary.  For each $\beta\in(0,1)$, let $U_\beta$ denote the set from \eqref{tu_01}, and let $\textbf{1}_\beta$ be a smooth cutoff function of $U_\beta$ in $U$.  That is, using the smoothness of the domain, for each $\beta\in(0,1)$, fix a smooth function $\textbf{1}_\beta:\mathbb{R}^d\rightarrow[0,1]$ satisfying
\begin{equation}\label{tu_1}\textbf{1}_\beta(x)=\left\{\begin{array}{ll} 1 & \textrm{if}\;\;x\in U_\beta, \\ 0 & \textrm{if}\;\;x\in U\setminus U_\frac{\beta}{2},\end{array}\right.\end{equation}
and, for $C>0$ independent of $\beta\in(0,1)$, for each $x\in\mathbb{R}^d$,
\begin{equation}\label{tu_2}\abs{\nabla\textbf{1}_\beta(x)}+\beta\abs{\nabla^2\textbf{1}_\beta(x)}\leq \frac{C}{\beta}.\end{equation}
For each $\beta\in(0,1)$, for $\varphi$ satisfying \eqref{u_aux}, we define
\begin{equation}\label{tu_3}\varphi_\beta:=\textbf{1}_\beta\varphi.\end{equation}
It will be necessary to choose $\ve\in(0,1)$ sufficiently smaller than $\beta\in(0,1)$, so as to guarantee that the regularization \eqref{tu_0} is well-defined.

The proof will proceed in eight steps.  The first step introduces an approximation scheme that relies on a regularization in the spatial and velocity variables, as well as a time-splitting.  Step two analyzes the terms of \eqref{u_formal_0} involving the $\sgn$ function, and step three considers the mixed term.  We observe the cancellation due to the parabolic defect measures in step four.  In step five, we analyze error terms arising from the transport by the characteristics using commutator estimates and the time-splitting.  In step six, we pass to the limit with respect to the regularization in the spatial and velocity variables.  In step seven, we pass to the limit with respect to the time-splitting, in the sense that we consider a sequence of partitions whose mesh approaches zero.  Finally, in step eight, we pass to the limit $\beta\rightarrow 0$ and remove the boundary layer.

\textbf{Step 1:  The time-splitting and mollification.}  Henceforth, let $\beta\in(0,1)$ and $\ve\in(0,\frac{\beta}{2})$ be fixed but arbitrary.  For each $i\in\{1,2\}$, let $\mathcal{N}^i$ denote the zero set appearing in Definition~\ref{sol_def} and define $\mathcal{N}=\mathcal{N}^1\cup\mathcal{N}^2$.  Let $T\in[0,\infty)\setminus\mathcal{N}$ be arbitrary and let $\mathcal{P}:=\left\{\;0=t_0<t_1<\ldots<t_N=T\;\right\}$ be a fixed but arbitrary partition of $[0,T]$ with $\mathcal{P}\subset [0,T]\setminus\mathcal{N}$.  Properties of the kinetic function, the fact that the characteristics preserve the sign of the velocity variable, the change of variables factor \eqref{c_w}, and definition \eqref{tu_0} imply that
\begin{equation}\label{tu_4}\begin{aligned} & \left.\int_\mathbb{R}\int_U\abs{\chi^1_r(y,\eta)-\chi^2_r(y,\eta)}^2v_{0,r}\varphi_\beta\right|_{r=0}^T \\  & = \left.\int_\mathbb{R}\int_U\left(\chi^1_r\sgn(\eta)+\chi^2_r\sgn(\eta)-2\chi^1_r\chi^2_r\right)v_{0,r}\varphi_\beta\right|_{r=0}^T \\ & = \sum_{i=0}^{N-1}\left.\int_\mathbb{R}\int_U\left(\tilde{\chi}^1_{t_i,r}\sgn(\eta)+\tilde{\chi}^2_{t_i,r}\sgn(\eta)-2\tilde{\chi}^1_{t_i,r}\tilde{\chi}^2_{t_i,r}\right)v_{0,t_i}\varphi_\beta\right|_{r=t_i}^{t_{i+1}}\end{aligned}\end{equation}
and, for each $i\in\{0,\ldots,N-1\}$, it follows from the convergence \eqref{tu_0101} that
$$\begin{aligned} & \left.\int_\mathbb{R}\int_U\left(\tilde{\chi}^1_{t_i,r}\sgn(\eta)+\tilde{\chi}^2_{t_i,r}\sgn(\eta)-2\tilde{\chi}^1_{t_i,r}\tilde{\chi}^2_{t_i,r}\right)v_{0,t_i}\varphi_\beta\right|_{r=t_i}^{t_{i+1}} \\ &  =\lim_{\ve\rightarrow 0}\left.\int_\mathbb{R}\int_U(\tilde{\chi}^{1,\ve}_{t_i,r}\tilde{\sgn}^\ve_{t_i,r}(y,\eta)+\tilde{\chi}^{2,\ve}_{t_i,r}\tilde{\sgn}^\ve_{t_i,r}(y,\eta)-2\tilde{\chi}^{1,\ve}_{t_i,r}\tilde{\chi}^{2,\ve}_{t_i,r})v_{0,t_i}\varphi_\beta\right|_{r=t_i}^{t_{i+1}}\end{aligned}$$
where, for each $i\in\{0,\ldots,N-1\}$,
\begin{equation}\label{tu_5}\tilde{\sgn}^\ve_{t_i,r}(y,\eta):= \int_{\mathbb{R}}\int_U\sgn(\xi) v_{t_i,r}(x)\rho^\ve_d(x-y)\rho^\ve_1(\Pi^{x,\xi}_{r,r-t_i}-\eta)\dx\dxi.\end{equation}
The characteristic \eqref{c_char} and the weight \eqref{c_w} imply, for each $i\in\{1,\ldots,N-1\}$,
\begin{equation}\label{tu_6}\tilde{\sgn}^\ve_{t_i,r}(y,\eta)=\int_{\mathbb{R}}\int_U\sgn(\Xi^{x,\xi}_{r,r-t_i})\rho^\ve_d(x-y)\rho^\ve_1(\xi-\eta)=\int_{\mathbb{R}}\sgn(\xi)\rho^\ve_1(\xi-\eta).\end{equation}
In particular, the regularization \eqref{tu_5} is independent of $r\in[t_i,\infty)$ and $y\in U$ and satisfies
\begin{equation}\label{tu_010101}\lim_{\ve\rightarrow 0} \tilde{\sgn}^\ve_{t_i,r}(y,\eta)=\sgn(\eta).\end{equation}
It is nonetheless convenient to consider the regularization \eqref{tu_5}, since it clarifies an important cancellation property of the equation.

\textbf{Step 2:  The $\sgn$ terms.}  We will begin by analyzing the terms of \eqref{tu_4} involving the $\sgn$ function.  Henceforth, let $i\in\{0,\ldots,N-1\}$ be fixed but arbitrary.  We will write $(x,\xi)\in U\times\mathbb{R}$ for the integration variables defining $\tilde{\chi}^{1,\ve}_{t_i,r}$ and $(x',\xi')\in U\times\mathbb{R}$ for the integration variables defining $\tilde{\sgn}^\ve_{t_i,r}$.  We then define the corresponding convolution kernels and change of variables factors $\rho^{1,\ve}_{t_i,r}:=\rho^\ve_{t_i,r}(x,y,\xi,\eta)$ and $v^1_{t_i,r}:=v^1_{t_i,r}(x)$, and $\rho^{2,\ve}_{t_i,r}:=\rho^\ve_{t_i,r}(x',y,\xi',\eta)$ and $v^2_{t_i,r}:=v^1_{t_i,r}(x')$.

The definition of $\varphi_\beta$ in \eqref{tu_3}, the choice of $\ve\in(0,\frac{\beta}{2})$, and \eqref{tu_6} imply that, using the test function $\rho(x,\xi)=\rho^\ve_d(x-y)\rho^\ve_1(\xi-\eta)$ in \eqref{transport_equation},
\begin{equation}\begin{aligned}\label{tu_7}  \left.\int_\mathbb{R}\int_U \tilde{\chi}^{1,\ve}_{t_i,r}\tilde{\sgn}^\ve_{t_i,r}v_{0,t_i} \varphi_\beta\right|_{r=t_i}^{t_{i+1}} & = \int_{t_i}^{t_{i+1}}\int_\mathbb{R}\int_U\left(\int_{\mathbb{R}}\int_{U}m\abs{\xi}^{m-1}\chi^1_r \Delta_x \left(\rho^{1,\ve}_{t_i,r}v^1_{t_i,r}\right)\right) \tilde{\sgn}^\ve_{t_i,r}v_{0,t_i}\varphi_\beta  \\ & \quad -\int_{t_i}^{t_{i+1}}\int_\mathbb{R}\int_U\left(\int_\mathbb{R}\int_U(p^1_r+q^1_r)v^1_{t_i,r}\partial_\xi\left(\rho^{1,\ve}_{t_i,r}\right)\right)\tilde{\sgn}^{\ve}_{t_i,r}v_{0,t_i}\varphi_\beta.\end{aligned}\end{equation}
For the first term of \eqref{tu_7}, the identity
$$\begin{aligned}  \nabla_x\rho^\ve_{t_i,r}(x,y,\xi,\eta) & = -\nabla_y\rho^\ve_d(x-y)\rho^\ve_1(\Pi^{x,\xi}_{r,r-t_i}-\eta)-\rho^\ve_d(x-y)\partial_\eta\rho^\ve_1(\Pi^{x,\xi}_{r,r-t_i}-\eta)\nabla_x\Pi^{x,\xi}_{r,r-t_i} \\ & =-\nabla_y\rho^\ve_{t_i,r}(x,y,\xi,\eta)-\partial_\eta\rho^\ve_{t_i,r}(x,y,\xi,\eta)\nabla_x\Pi^{x,\xi}_{r,r-t_i},\end{aligned}$$
implies that
$$\begin{aligned}& \int_{t_i}^{t_{i+1}}\int_\mathbb{R}\int_U\left(\int_{\mathbb{R}}\int_{U}m\abs{\xi}^{m-1}\chi^1_r \Delta_x \left(\rho^{1,\ve}_{t_i,r}v^1_{t_i,r}\right)\right) \tilde{\sgn}^\ve_{t_i,r}v_{0,t_i}\varphi_\beta \\ &   = \int_{t_i}^{t_{i+1}}\int_{\mathbb{R}^2}\int_{U^2}m\abs{\xi}^{m-1}\chi^1_r \nabla_x\left(\rho^{1,\ve}_{t_i,r}v^1_{t_i,r}\right)\cdot \nabla_y\left(\tilde{\sgn}^\ve_{t_i,r}v_{0,t_i}\varphi_\beta\right) \\ &  \quad + \int_{t_i}^{t_{i+1}}\int_{\mathbb{R}^2}\int_{U^2}m\abs{\xi}^{m-1}\chi^1_r \nabla_x\cdot\left(\rho^{1,\ve}_{t_i,r}v^1_{t_i,r}\nabla_x\Pi^{x,\xi}_{r,r-t_i}\right) \partial_\eta\left(\tilde{\sgn}^\ve_{t_i,r}v_{0,t_i}\varphi_\beta\right) \\ & \quad + \int_{t_i}^{t_{i+1}}\int_{\mathbb{R}^2}\int_{U^2}m\abs{\xi}^{m-1}\chi^1_r \nabla_x \cdot \left(\rho^{1,\ve}_{t_i,r}\nabla_xv^1_{t_i,r}\right) \tilde{\sgn}^\ve_{t_i,r}v_{0,t_i}\varphi_\beta. \end{aligned}$$

After adding and subtracting the gradients $\nabla_{x'}\Pi^{x',\xi'}_{r,r-t_i}$ and $\nabla_{x'}v^2_{t_i,r}(x')$,
\begin{equation}\label{tu_90}\begin{aligned} & \int_{t_i}^{t_{i+1}}\int_\mathbb{R}\int_U\left(\int_{\mathbb{R}}\int_{U}m\abs{\xi}^{m-1}\chi^1_r \Delta_x \left(\rho^{1,\ve}_{t_i,r}v^1_{t_i,r}\right)\right) \tilde{\sgn}^\ve_{t_i,r}v_{0,t_i}\varphi_\beta \\ &   =\int_{t_i}^{t_{i+1}}\int_\mathbb{R}\int_U\left(\int_{\mathbb{R}}\int_{U}m\abs{\xi}^{m-1}\chi^1_r \nabla_x\left(\rho^{1,\ve}_{t_i,r}v^1_{t_i,r}\right)\right)\cdot \tilde{\sgn}^\ve_{t_i,r}\nabla_y\left(v_{0,t_i}\varphi_\beta\right) \\  &  \quad - \int_{t_i}^{t_{i+1}}\int_{\mathbb{R}^3}\int_{U^3} m\abs{\xi}^{m-1}\chi^1_r\nabla_x\left(\rho^{1,\ve}_{t_i,r}v^1_{t_i,r}\right)\cdot\sgn(\xi')\nabla_{x'}\left(\rho^{2,\ve}_{t_i,r}v^2_{t_i,r}\right)v_{0,t_i}\varphi_\beta + \textrm{Err}^{1,1}_{i,\ve},\end{aligned}\end{equation}
for the error term
\begin{equation}\label{tu_10} \textrm{Err}^{1,1}_{i,\ve}: = \int_{t_i}^{t_{i+1}}\int_{\mathbb{R}^3}\int_{U^3}\left(\textrm{err}^{1,1}_{i,\ve}\right)v_{0,t_i}\varphi_\beta,\end{equation}
where
$$\begin{aligned}  \textrm{err}^{1,1}_{i,\ve}& :=m\abs{\xi}^{m-1}\chi^1_r \nabla_x\cdot\left(\rho^{1,\ve}_{t_i,r}\left(\nabla_xv^1_{t_i,r}-\nabla_{x'}v^2_{t_i,r}\right)\right)\sgn(\xi')\rho^{2,\ve}_{t_i,r}v^2_{t_i,r} \\ & \quad +m\abs{\xi}^{m-1}\chi^1_r \nabla_x\cdot \left(\rho^{1,\ve}_{t_i,r}v^1_{t_i,r}\left(\nabla_x\Pi^{x,\xi}_{r,r-t_i}-\nabla_{x'}\Pi^{x',\xi'}_{r,r-t_i}\right)\right)\sgn(\xi')\partial_\eta\rho^{2,\ve}_{t_i,r}v^2_{t_i,r}.\end{aligned}$$
Furthermore, in view of \eqref{tu_6}, the second term on the righthand side of \eqref{tu_90} vanishes after integrating by parts in $x'\in U$.  Therefore, from \eqref{tu_90} and \eqref{tu_10},
\begin{equation}\begin{aligned}\label{tu_11} & \int_{t_i}^{t_{i+1}}\int_\mathbb{R}\int_U\left(\int_{\mathbb{R}}\int_{U}m\abs{\xi}^{m-1}\chi^1_r \Delta_x \left(\rho^{1,\ve}_{t_i,r}v^1_{t_i,r}\right)\right) \tilde{\sgn}^\ve_{t_i,r}v_{0,t_i}\varphi_\beta \\ &   = \int_{t_i}^{t_{i+1}}\int_\mathbb{R}\int_U\left(\int_{\mathbb{R}}\int_{U}m\abs{\xi}^{m-1}\chi^1_r \nabla_x\left(\rho^{1,\ve}_{t_i,r}v^1_{t_i,r}\right)\right)\cdot \tilde{\sgn}^\ve_{t_i,r}\nabla_y\left(v_{0,t_i}\varphi_\beta\right) + \textrm{Err}^{1,1}_{i,\ve}.\end{aligned}\end{equation}

For the second term of \eqref{tu_7}, we will use the identity
$$\partial_\xi\rho^\ve_{t_i,r}(x,y,\xi,\eta)=-\partial_\eta\rho^\ve_{t_i,r}(x,y,\xi,\eta)\partial_\xi\Pi^{x,\xi}_{r,r-t_i},$$
which implies that, after adding and subtracting the derivative $\partial_{\xi'}\Pi^{x',\xi'}_{r,r-t_i}$,
\begin{equation}\begin{aligned}\label{tu_13} & \int_{t_i}^{t_{i+1}}\int_\mathbb{R}\int_U\left(\int_\mathbb{R}\int_U(p^1_r+q^1_r)v^1_{t_i,r}\partial_\xi\left(\rho^{1,\ve}_{t_i,r}\right)\right)\tilde{\sgn}^\ve_{t_i,r}v_{0,t_i}\varphi_\beta  \\ & = -\int_{t_i}^{t_{i+1}}\int_\mathbb{R}\int_U\left(\int_{\mathbb{R}}\int_{U^2}(p^1_r+q^1_r)\rho^{1,\ve}_{t_i,r}v^1_{t_i,r}\sgn(\xi')\partial_{\xi'}\left(\rho^{2,\ve}_{t_i,r}\right)v^2_{t_i,r}\right)v_{0,t_i}\varphi_\beta + \textrm{Err}^{1,2}_{i,\ve},\end{aligned}\end{equation}
for the error term
\begin{equation}\label{tu_14}\textrm{Err}^{1,2}_{i,\ve}:=\int_{t_i}^{t_{i+1}}\int_{\mathbb{R}^3}\int_{U^3}\left(\textrm{err}^{1,2}_{i,\ve}\right)v_{0,t_i}\varphi_\beta,\end{equation}
where
$$ \textrm{err}^{1,2}_{i,\ve} :=(p^1_r+q^1_r)\left(\rho^{1,\ve}_{t_i,r}v^1_{t_i,r}\left(\partial_\xi\Pi^{x,\xi}_{r,r-t_i}-\partial_{\xi'}\Pi^{x',\xi'}_{r,r-t_i}\right)\right)\sgn(\xi')\partial_\eta\rho^{2,\ve}_{t_i,r}v^2_{t_i,r}. $$
Using the distributional inequality $\partial_{\xi'}\sgn(\xi')=2\delta_0(\xi')$, the first term of \eqref{tu_13} satisfies, after integrating by parts,
\begin{equation}\label{tu_133} 2\int_{t_i}^{t_{i+1}}\int_{\mathbb{R}^2}\int_{U^3}(p^1_r+q^1_r)\rho^{1,\ve}_{t_i,r}(x,y,\xi,\eta)v^1_{t_i,r}\rho^{2,\ve}_{t_i,r}(x',y,0,\eta)v^2_{t_i,r}v_{0,t_i}\varphi_\beta.\end{equation}
Therefore, in view of \eqref{tu_7}, \eqref{tu_11}, \eqref{tu_13}, and \eqref{tu_133},
\begin{equation}\begin{aligned}\label{tu_15} & \left.\int_\mathbb{R}\int_U \tilde{\chi}^{1,\ve}_{t_i,r}\tilde{\sgn}^\ve_{t_i,r}(y,\eta)v_{0,t_i}\varphi_\beta\right|_{r=t_i}^{t_{i+1}} \\ & = \int_{t_i}^{t_{i+1}}\int_\mathbb{R}\int_U\left(\int_{\mathbb{R}}\int_{U}m\abs{\xi}^{m-1}\chi^1_r \nabla_x\left(\rho^{1,\ve}_{t_i,r}v^1_{t_i,r}\right)\right)\cdot \tilde{\sgn}^\ve_{t_i,r}\nabla_y\left(v_{0,t_i}\varphi_\beta\right) \\ & \quad - 2\int_{t_i}^{t_{i+1}}\int_\mathbb{R}\int_U\left(\int_{\mathbb{R}}\int_{U^2}(p^1_r+q^1_r)\rho^{1,\ve}_{t_i,r}v^1_{t_i,r}\rho^{2,\ve}_{t_i,r}(x',y,0,\eta)v^2_{t_i,r}\right)v_{0,t_i}\varphi_\beta +\textrm{Err}^{1,1}_{i,\ve}-\textrm{Err}^{1,2}_{i,\ve}.\end{aligned}\end{equation}
Similarly, now writing $(x',\xi')\in U\times\mathbb{R}$ for the integration variables defining $\tilde{\chi}^{2,\ve}_{t_i,r}$, and defining $\textrm{Err}^{2,1}_{i,\ve}$ and $\textrm{Err}^{2,2}_{i,\ve}$ in exact analogy with \eqref{tu_10} and \eqref{tu_14},
\begin{equation}\begin{aligned}\label{tu_16} & \left.\int_\mathbb{R}\int_U \tilde{\chi}^{2,\ve}_{t_i,r}\tilde{\sgn}^\ve_{t_i,r}(y,\eta)v_{0,t_i}\varphi_\beta\right|_{r=t_i}^{t_{i+1}} \\ & =\int_{t_i}^{t_{i+1}}\int_\mathbb{R}\int_U\left(\int_{\mathbb{R}}\int_{U}m\abs{\xi'}^{m-1}\chi^2_r \nabla_{x'}\left(\rho^{2,\ve}_{t_i,r}v^2_{t_i,r}\right)\right)\cdot \tilde{\sgn}^\ve_{t_i,r}\nabla_y\left(v_{0,t_i}\varphi_\beta\right) \\ & \quad - 2\int_{t_i}^{t_{i+1}}\int_\mathbb{R}\int_U\left(\int_{\mathbb{R}}\int_{U^2}(p^2_r+q^2_r)\rho^{2,\ve}_{t_i,r}v^2_{t_i,r}\rho^{1,\ve}_{t_i,r}(x,y,0,\eta)v^1_{t_i,r}\right)v_{0,t_i}\varphi_\beta +\textrm{Err}^{2,1}_{i,\ve}-\textrm{Err}^{2,2}_{i,\ve}.\end{aligned}\end{equation}
This completes our initial analysis of the $\sgn$ terms.

\textbf{Step 3:  The mixed term.}  We will now analyze the mixed term of \eqref{tu_4}.  We will write $(x,\xi)\in U\times\mathbb{R}$ for the integration variables defining $\tilde{\chi}^{1,\ve}_{t_i,r}$, and we will write $(x',\xi')\in U\times\mathbb{R}$ for the integration variables defining $\tilde{\chi}^{2,\ve}_{t_i,r}$.  We make the same conventions concerning the convolution kernels $\{\rho^{j,\ve}_{t_i,r}\}_{j\in\{1,2\}}$ and weights $\{v^j_{t_i,r}\}_{j\in\{1,2\}}$.

The definition of $\varphi_\beta$ in \eqref{tu_3}, the choice of $\ve\in(0,\frac{\beta}{2})$, and \eqref{tu_6} imply that $\rho(x,\xi)=\rho^\ve_d(x-y)\rho^\ve_1(\xi-\eta)$ is an admissible test function for \eqref{transport_equation} for each $(y,\eta)\in U\times\mathbb{R}$ in the support of $\varphi_\beta$.  Therefore, a virtually identical analysis as that leading from \eqref{tu_7} to \eqref{tu_15} and \eqref{tu_16} proves that
\begin{equation}\label{tu_18}\begin{aligned}   \left.\int_\mathbb{R}\int_U \tilde{\chi}^{1,\ve}_{t_i,r}\tilde{\chi}^{2,\ve}_{t_i,r}v_{0,t_i}\varphi_\beta\right|_{r=t_i}^{t_{i+1}} & = \int_{t_i}^{t_{i+1}}\int_\mathbb{R}\int_U\left(\int_{\mathbb{R}}\int_{U}m\abs{\xi}^{m-1}\chi^1_r \nabla_x\left(\rho^{1,\ve}_{t_i,r}v^1_{t_i,r}\right)\right)\cdot \tilde{\chi}^{2,\ve}_{t_i,r}\nabla_y\left(v_{0,t_i}\varphi_\beta\right) \\ &\quad + \int_{t_i}^{t_{i+1}}\int_\mathbb{R}\int_U\left(\int_{\mathbb{R}}\int_{U}m\abs{\xi'}^{m-1}\chi^2_r \nabla_{x'}\left(\rho^{2,\ve}_{t_i,r}v^2_{t_i,r}\right)\right)\cdot \tilde{\chi}^{1,\ve}_{t_i,r}\nabla_y\left(v_{0,t_i}\varphi_\beta\right) \\ &  \quad -\int_{t_i}^{t_{i+1}}\int_{\mathbb{R}^3}\int_{U^3} m\abs{\xi}^{m-1}\chi^1_r\nabla_x\left(\rho^{1,\ve}_{t_i,r}v^1_{t_i,r}\right)\cdot \chi^2_r \nabla_{x'}\left(\rho^{2,\ve}_{t_i,r}v^2_{t_i,r}\right)v_{0,t_i}\varphi_\beta \\ & \quad -\int_{t_i}^{t_{i+1}}\int_{\mathbb{R}^3}\int_{U^3} m\abs{\xi'}^{m-1}\chi^1_r\nabla_x\left(\rho^{1,\ve}_{t_i,r}v^1_{t_i,r}\right)\cdot \chi^2_r \nabla_{x'}\left(\rho^{2,\ve}_{t_i,r}v^2_{t_i,r}\right)v_{0,t_i}\varphi_\beta \\ &  \quad -\int_{t_i}^{t_{i+1}}\int_{\mathbb{R}^3}\int_{U^3}\left(\left(p^1_r+q^1_r\right)\rho^{1,\ve}_{t_i,r}v^1_{t_i,r}\cdot \left(\partial_{\xi'}\chi^2_r\right)\rho^{2,\ve}_{t_i,r}v^2_{t_i,r}\right)v_{0,t_i}\varphi_\beta   \\ &  \quad -\int_{t_i}^{t_{i+1}}\int_{\mathbb{R}^3}\int_{U^3}\left(\left(p^2_r+q^2_r\right)\rho^{2,\ve}_{t_i,r}v^2_{t_i,r}\cdot \left(\partial_\xi\chi^1_r\right)\rho^{1,\ve}_{t_i,r}v^1_{t_i,r}\right)v_{0,t_i}\varphi_\beta \\ & \quad + \textrm{Err}^{1,3}_{i,\ve}-\textrm{Err}^{1,4}_{i,\ve}+\textrm{Err}^{2,3}_{i,\ve}-\textrm{Err}^{2,4}_{i,\ve},\end{aligned}\end{equation}
for the error terms
\begin{equation}\label{tu_19} \textrm{Err}^{1,3}_{i,\ve} := \int_{t_i}^{t_{i+1}}\int_{\mathbb{R}^3}\int_{U^3}\left(\textrm{err}^{1,3}_{i,\ve}\right)v_{0,t_i}\varphi_\beta,\end{equation}
where
$$\begin{aligned} & \textrm{err}^{1,3}_{i,\ve} :=m\abs{\xi}^{m-1}\chi^1_r \nabla_x\cdot\left(\rho^{1,\ve}_{t_i,r}\left(\nabla_xv^1_{t_i,r}-\nabla_{x'}v^2_{t_i,r}\right)\right)\chi^2_r\rho^{2,\ve}_{t_i,r}v^2_{t_i,r} \\ &  \quad +m\abs{\xi}^{m-1}\chi^1_r \nabla_x\cdot\left(\rho^{1,\ve}_{t_i,r}v^1_{t_i,r}\left(\nabla_x\Pi^{x,\xi}_{r,r-t_i}-\nabla_{x'}\Pi^{x',\xi'}_{r,r-t_i}\right)\right)\chi^2_r\partial_\eta\rho^{2,\ve}_{t_i,r}v^2_{t_i,r},  \end{aligned}$$
and for
\begin{equation}\label{tu_20} \textrm{Err}^{1,4}_{i,\ve} := \int_{t_i}^{t_{i+1}}\int_{\mathbb{R}^3}\int_{U^3}\left(\textrm{err}^{1,4}_{i,\ve}\right)v_{0,t_i}\varphi_\beta,\end{equation}
where
$$\textrm{err}^{1,4}_{i,\ve}:=\left(p^1_r+q^1_r\right)\rho^{1,\ve}_{t_i,r}v^1_{t_i,r}\left(\partial_\xi\Pi^{x,\xi}_{r,r-t_i}-\partial_{\xi'}\Pi^{x',\xi'}_{r,r-t_i}\right)\chi^2_r\partial_\eta\rho^{2,\ve}_{t_i,r}v^2_{t_i,r},$$
and with $\textrm{Err}^{2,3}_{i,\ve}$ and $\textrm{Err}^{2,4}_{i,\ve}$ defined analogously to \eqref{tu_19} and \eqref{tu_20}.  This concludes the initial analysis of the mixed term.

\textbf{Step 4:  The cancellation of the parabolic defect measures.}  We will now observe the cancellation from the parabolic defect measures.  It follows from \eqref{tu_15}, \eqref{tu_16}, \eqref{tu_18}, and the distributional equality $\partial_\xi \chi^j_t(x,\xi)=\delta_0(\xi)-\delta_0(\xi-u^j(x,t))$ that
\begin{equation}\label{tu_21}\begin{aligned}
& \left.\int_\mathbb{R}\int_U\left(\tilde{\chi}^{1,\ve}_{t_i,r}\tilde{\sgn}^\ve_{t_i,r}(y,\eta)+\tilde{\chi}^{2,\ve}_{t_i,r}\tilde{\sgn}^\ve_{t_i,r}(y,\eta)-2\tilde{\chi}^{1,\ve}_{t_i,r}\tilde{\chi}^{2,\ve}_{t_i,r}\right)v_{0,t_i}\varphi_\beta\right|_{r=t_i}^{t_{i+1}}
 \\ & =\int_{t_i}^{t_{i+1}}\int_{\mathbb{R}^2}\int_{U^2}m\abs{\xi}^{m-1}\chi^1_r \nabla_x\left(\rho^{1,\ve}_{t_i,r}v^1_{t_i,r}\right)\cdot \left(\tilde{\sgn}^\ve_{t_i,r}-2\tilde{\chi}^{2,\ve}_{t_i,r}\right)\nabla_y\left(v_{0,t_i}\varphi_\beta\right)
 \\ &  \quad + \int_{t_i}^{t_{i+1}}\int_{\mathbb{R}^2}\int_{U^2}m\abs{\xi'}^{m-1}\chi^2_r \nabla_{x'}\left(\rho^{2,\ve}_{t_i,r}v^2_{t_i,r}\right)\cdot \left(\tilde{\sgn}^\ve_{t_i,r}-2\tilde{\chi}^{1,\ve}_{t_i,r}\right)\nabla_y\left(v_{0,t_i}\varphi_\beta\right)
 \\ &\quad + 2\int_{t_i}^{t_{i+1}}\int_{\mathbb{R}^3}\int_{U^3} m\abs{\xi}^{m-1}\chi^1_r\nabla_x\left(\rho^{1,\ve}_{t_i,r}v^1_{t_i,r}\right)\cdot \chi^2_r \nabla_{x'}\left(\rho^{2,\ve}_{t_i,r}v^2_{t_i,r}\right)v_{0,t_i}\varphi_\beta
 \\ &\quad + 2\int_{t_i}^{t_{i+1}}\int_{\mathbb{R}^3}\int_{U^3} m\abs{\xi'}^{m-1}\chi^1_r\nabla_x\left(\rho^{1,\ve}_{t_i,r}v^1_{t_i,r}\right)\cdot \chi^2_r \nabla_{x'}\left(\rho^{2,\ve}_{t_i,r}v^2_{t_i,r}\right)v_{0,t_i}\varphi_\beta
 \\ & \quad - 2\int_{t_i}^{t_{i+1}}\int_{\mathbb{R}^2}\int_{U^3}\left(\left(p^1_r+q^1_r\right)\rho^{1,\ve}_{t_i,r}v^1_{t_i,r} \rho^{2,\ve}_{t_i,r}(x',y,u^2(x',t),\eta)v^2_{t_i,r}\right)v_{0,t_i}\varphi_\beta
 \\ & \quad - 2 \int_{t_i}^{t_{i+1}}\int_{\mathbb{R}^2}\int_{U^3}\left(\left(p^2_r+q^2_r\right)\rho^{2,\ve}_{t_i,r}v^2_{t_i,r}\cdot \rho^{1,\ve}_{t_i,r}(x,y,u^1(x,t),\eta)v^1_{t_i,r}\right)v_{0,t_i}\varphi_\beta
 \\ & \quad + \sum_{j=1}^2 \textrm{Err}^{j,1}_{i,\ve}-\textrm{Err}^{j,2}_{i,\ve}-2\textrm{Err}^{j,3}_{i,\ve}+2\textrm{Err}^{j,4}_{i,\ve}.\end{aligned}\end{equation}
The integration by parts formula \eqref{equation_ibp}, Proposition~\ref{aux_p} below, and an approximation argument imply that, after applying H\"older's inequality and Young's inequality,
$$\begin{aligned} & 4\int_{t_i}^{t_{i+1}}\int_{\mathbb{R}^3}\int_{U^3} m\abs{\xi}^{\frac{m-1}{2}}\abs{\xi'}^{\frac{m-1}{2}}\chi^1_r\nabla_x\left(\rho^{1,\ve}_{t_i,r}v^1_{t_i,r}\right)\cdot \chi^2_r \nabla_{x'}\left(\rho^{2,\ve}_{t_i,r}v^2_{t_i,r}\right)v_{0,t_i}\varphi_\beta \\ & = \frac{4m}{(m+1)^2}\int_{t_i}^{t_{i+1}}\int_\mathbb{R}\int_{U^3}\nabla\left(u^1\right)^{\left[\frac{m+1}{2}\right]}\cdot\nabla\left(u^2\right)^{\left[\frac{m+1}{2}\right]}\overline{\rho}^{1,\ve}_{t_i,r}v^1_{t_i,r}\overline{\rho}^{2,\ve}_{t_i,r}v^2_{t_i,r},\end{aligned}$$
where, for each $j\in\{1,2\}$, for $\rho^\ve_{t_i,r}$ from \eqref{tu_00},
$$\overline{\rho}^{j,\ve}_{t_i,r}(x,\xi,\eta):=\rho^\ve_{t_i,r}(x,u^j(x,r),y,\eta).$$
Therefore, the definition and nonnegativity of the entropy and parabolic defect measures imply with H\"older's inequality and Young's inequality that
\begin{equation}\label{tu_022}\begin{aligned} & 4\int_{t_i}^{t_{i+1}}\int_{\mathbb{R}^3}\int_{U^3}m\abs{\xi}^{\frac{m-1}{2}}\abs{\xi'}^{\frac{m-1}{2}}\chi^1_r\nabla_x\left(\rho^{1,\ve}_{t_i,r}v^1_{t_i,r}\right)\cdot \chi^2_r \nabla_{x'}\left(\rho^{2,\ve}_{t_i,r}v^2_{t_i,r}\right)v_{0,t_i}\varphi_\beta \\ & \leq 2\int_{t_i}^{t_{i+1}}\int_{\mathbb{R}^2}\int_{U^3}\left(p^1_r+q^1_r\right)\rho^{1,\ve}_{t_i,r}v^1_{t_i,r}\cdot \overline{\rho}^{2,\ve}_{t_i,r}v^2_{t_i,r}v_{0,t_i}\varphi_\beta \\ & \quad + 2 \int_{t_i}^{t_{i+1}}\int_{\mathbb{R}^2}\int_{U^3}\left(p^2_r+q^2_r\right)\rho^{2,\ve}_{t_i,r}v^2_{t_i,r}\cdot \overline{\rho}^{1,\ve}_{t_i,r}v^1_{t_i,r}v_{0,t_i}\varphi_\beta.\end{aligned}\end{equation}
It follows from the identity
$$m\left(\abs{\xi}^{m-1}+\abs{\xi'}^{m-1}-2\abs{\xi}^{\frac{m-1}{2}}\abs{\xi'}^{\frac{m-1}{2}}\right)=m\left(\abs{\xi}^{\frac{m-1}{2}}-\abs{\xi'}^{\frac{m-1}{2}}\right)^2,$$
that, after adding and subtracting $\abs{\xi}^{\frac{m-1}{2}}\abs{\xi'}^{\frac{m-1}{2}}$ in the third and fourth terms of \eqref{tu_21} and applying \eqref{tu_022},
\begin{equation}\label{tu_24}\begin{aligned}& \left.\int_\mathbb{R}\int_U\left(\tilde{\chi}^{1,\ve}_{t_i,r}\tilde{\sgn}^\ve_{t_i,r}(y,\eta)+\tilde{\chi}^{2,\ve}_{t_i,r}\tilde{\sgn}^\ve_{t_i,r}(y,\eta)-2\tilde{\chi}^{1,\ve}_{t_i,r}\tilde{\chi}^{2,\ve}_{t_i,r}\right)v_{0,t_i}\varphi_\beta\right|_{r=t_i}^{t_{i+1}}
\\ & \leq \int_{t_i}^{t_{i+1}}\int_{\mathbb{R}^2}\int_{U^2}m\abs{\xi}^{m-1}\chi^1_r \nabla_x\left(\rho^{1,\ve}_{t_i,r}v^1_{t_i,r}\right)\cdot \left(\tilde{\sgn}^\ve_{t_i,r}-2\tilde{\chi}^{2,\ve}_{t_i,r}\right)\nabla_y\left(v_{0,t_i}\varphi_\beta\right)
\\ & \quad + \int_{t_i}^{t_{i+1}}\int_{\mathbb{R}^2}\int_{U^2}m\abs{\xi'}^{m-1}\chi^2_r \nabla_{x'}\left(\rho^{2,\ve}_{t_i,r}v^2_{t_i,r}\right)\cdot \left(\tilde{\sgn}^\ve_{t_i,r}-2\tilde{\chi}^{1,\ve}_{t_i,r}\right)\nabla_y\left(v_{0,t_i}\varphi_\beta\right)
\\ & \quad + \sum_{j=1}^2 \left(\textrm{Err}^{j,1}_{i,\ve}-\textrm{Err}^{j,2}_{i,\ve}-2\textrm{Err}^{j,3}_{i,\ve}+2\textrm{Err}^{j,4}_{i,\ve}\right)+\textrm{Err}^5_{i,\ve}, \end{aligned}\end{equation}
for the error term
\begin{equation}\label{tu_23} \textrm{Err}^5_{i,\ve}:=2\int_{t_i}^{t_{i+1}}\int_{\mathbb{R}^3}\int_{U^3}\left(\textrm{err}^5_{i,\ve}\right)v_{0,t_i}\varphi_\beta,\end{equation}
where
\begin{equation}\label{error_5}\textrm{err}^5_{i,\ve}:=m\left(\abs{\xi}^{\frac{m-1}{2}}-\abs{\xi'}^{\frac{m-1}{2}}\right)^2\chi^1_r\nabla_x\left(\rho^{1,\ve}_{t_i,r}v^1_{t_i,r}\right)\cdot \chi^2_r \nabla_{x'}\left(\rho^{2,\ve}_{t_i,r}v^2_{t_i,r}\right).\end{equation}
We will now analyze the error terms.

\textbf{Step 5:  The error terms.}  The first four error terms will be controlled using the continuity of the noise.  Precisely, for each $\delta\in(0,1)$, define
$$\omega(\delta;T):=\sup_{\stackrel{0\leq s\leq t\leq T}{\abs{s-t}\leq \delta}}\abs{z_t-z_s},$$
where the continuity of the noise implies that $\lim_{\delta\rightarrow 0}\omega(\delta;T)=0$.
Observe from the definition the weight \eqref{c_w} that, using the regularity of the coefficents, for $C=C(T)>0$, for each $x,x'\in U$,
\begin{equation}\label{tu_25} \max_{r\in[t_i,t_{i+1}]}\abs{\nabla_x v_{t_i,r}(x)-\nabla_{x'} v_{t_i,r}(x')}\leq  C\omega(\abs{t_{i+1}-t_i};T)\abs{x-x'},\end{equation}
and, using the definition of the characteristic \eqref{c_char}, for each $(x,x',\xi,\xi')\in U^2\times\mathbb{R}^2$,
\begin{equation}\label{tu_26} \max_{r\in[t_i,t_{i+1}]}\abs{\partial_\xi\Pi^{x,\xi}_{r,r-t_i}-\partial_{\xi'}\Pi^{x',\xi'}_{r,r-t_i}}\leq C\omega(\abs{t_{i+1}-t_i};T)\abs{x-x'},\end{equation}
and
\begin{equation} \label{tu_27} \max_{r\in[t_i,t_{i+1}]}\abs{\nabla_x\Pi^{x,\xi}_{r,r-t_i}-\nabla_{x'}\Pi^{x',\xi'}_{r,r-t_i}} \leq C\omega(\abs{t_{i+1}-t_i};T)\left(\abs{\xi-\xi'}+\min\{\abs{\xi},\abs{\xi'}\}\abs{x-x'}\right).\end{equation}
Observe that the definition of the convolution kernel \eqref{tu_00} implies that, whenever
$$\rho^\ve_{s,t}(x,y,\xi,\eta)\rho^\ve_{s,t}(x',y,\xi',\eta)\neq 0,$$
we have, for $C=C(T)>0$, 
\begin{equation}\label{tu_000} \abs{x-x'}\leq C\ve\;\;\textrm{and}\;\; \abs{\xi-\xi'}\leq C\left(1+\min\{\abs{\xi},\abs{\xi'}\}\right)\ve.\end{equation}
Finally, it follows from the definition of the convolution kernel \eqref{tu_00} and the change of variables factor \eqref{c_w} that there exists $C=C(T)>0$ such that, for each $(x,\xi,r)\in U\times\mathbb{R}\times[t_i,t_{i+1}]$,
\begin{equation}\label{u_01000}\int_\mathbb{R}\int_{\mathbb{R}^d}\abs{\partial_\eta\rho^\ve_{t_i,r}(x,y,\xi\,\eta)}\dy\deta\leq \frac{C}{\ve}.\end{equation}

\emph{$\{\textrm{Err}^{j,1}_{i,\ve}\}_{j\in\{1,2\}}$ and $\{\textrm{Err}^{j,3}_{i,\ve}\}_{j\in\{1,2\}}$}:  We first treat the errors $\{\textrm{Err}^{j,1}_{i,\ve}\}_{j\in\{1,2\}}$ from \eqref{tu_10} and $\{\textrm{Err}^{j,3}_{i,\ve}\}_{j\in\{1,2\}}$ from \eqref{tu_19}.  For the case of  $\textrm{Err}^{1,1}_{i,\ve}$, first applying the integration by parts formula \eqref{equation_ibp}, which is justified using Lemma~\ref{lem_bdry} below, taking the absolute value, and using the boundedness of $\sgn$, $v_{t_i,r}$, $v_{0,t_i}$, and $\varphi_\beta$, for $C=C(m,U,T)>0$,
$$\begin{aligned} \abs{\textrm{Err}^{1,1}_{i,\ve}}& \leq C\int_{t_i}^{t_{i_1}}\int_{\mathbb{R}^2}\int_{U^3}\abs{u^1}^{\frac{m-1}{2}}\abs{\nabla\left(u^1\right)^{\left[\frac{m+1}{2}\right]}}\overline{\rho}^{1,\ve}_{t_i,r}\abs{\nabla_x\Pi^{x,\xi}_{r,r-t_i}-\nabla_{x'}\Pi^{x',\xi'}_{r,r-t_i}}\abs{\partial_\eta\rho^{2,\ve}_{t_i,r}} \\ & \quad + C\int_{t_i}^{t_{i_1}}\int_{\mathbb{R}^2}\int_{U^3}\abs{u^1}^{\frac{m-1}{2}}\abs{\nabla\left(u^1\right)^{\left[\frac{m+1}{2}\right]}}\overline{\rho}^{1,\ve}_{t_i,r}\abs{\nabla_xv^1_{t_i,r}(x)-\nabla_{x'}v^2_{t_i,r}(x')}\rho^{2,\ve}_{t_i,r}.\end{aligned}$$
Estimates \eqref{tu_25}, \eqref{tu_27}, \eqref{tu_000}, and \eqref{u_01000}, and the definition of the parabolic defect measures imply that, after applying H\"older's inequality and Young's inequality, if $m\geq 1$, for $C=C(m,U,T)>0$,
$$\begin{aligned} \abs{\textrm{Err}^{1,1}_{i,\ve}}  \leq &  C\omega(\abs{t_{i+1}-t_i};T)\int_{t_i}^{t_{i+1}}\int_\mathbb{R}\int_U q^1_r\dx\dxi\dr \\ & +C\omega(\abs{t_{i+1}-t_i};T)\int_{t_i}^{t_{i+1}}\int_U \abs{u^1}^{m-1}+\abs{u^1}^{m+1}\dx\dr,\end{aligned}$$
and, if $m\in(0,1)$, for $C=C(m,U,T)>0$,
$$\begin{aligned}\abs{\textrm{Err}^{1,1}_{i,\ve}}\leq & C\omega(\abs{t_{i+1}-t_i};T)\left(\abs{t_{i+1}-t_i}+\int_{t_i}^{t_{i+1}}\int_\mathbb{R}\int_U \abs{\xi}^{m-1}q^1_r\dx\dxi\dr\right) \\ & +\omega(\abs{t_{i+1}-t_i};T)\int_{t_i}^{t_{i+1}}\int_U\abs{u^1}^{m+1}\dx\dr.\end{aligned}$$
The difference in the cases $m\geq 1$ and $m\in(0,1)$ is that, for $m\in(0,1)$, the power $\abs{u^1}^{m-1}$ is not, in general, integrable.  Therefore, in the fast diffusion case, we absorb this term into a singular moment of the parabolic defect measure which is shown to be bounded in Proposition~\ref{aux_p} below.

The estimate for $\textrm{Err}^{2,1}_{i,\ve}$ and the errors $\{\textrm{Err}^{j,3}_{i,\ve}\}_{j\in\{1,2\}}$ are virtually identical.  We therefore conclude that, for each $j\in\{1,2\}$, for $C=C(m,U,T)>0$,
\begin{equation}\begin{aligned}\label{tu_28} \abs{\textrm{Err}^{j,1}_{i,\ve}}+\abs{\textrm{Err}^{j,3}_{i,\ve}}\leq & C\omega(\abs{t_{i+1}-t_i};T)\int_{t_i}^{t_{i+1}}\int_\mathbb{R}\int_U(1+\abs{\xi}^{(m-1)\wedge 0})q^j_r \\ & + C\omega(\abs{t_{i+1}-t_i};T)\int_{t_i}^{t_{i+1}}\int_U\abs{u^j}^{m+1}+\abs{u^j}^{(m-1)\vee 0}. \end{aligned}\end{equation}
The first term on the righthand side of \eqref{tu_28} is finite owing to Proposition~\ref{aux_p} below, and the second term is controlled using Lemma~\ref{interpolation} below, the boundedness of the domain, and H\"older's inequality.

\emph{$\{\textrm{Err}^{j,2}_{i,\ve}\}_{j\in\{1,2\}}$ and $\{\textrm{Err}^{j,4}_{i,\ve}\}_{j\in\{1,2\}}$}: We now treat the errors $\{\textrm{Err}^{j,2}_{i,\ve}\}_{j\in\{1,2\}}$ from \eqref{tu_14} and $\{\textrm{Err}^{j,4}_{i,\ve}\}_{j\in\{1,2\}}$ from \eqref{tu_20}.   For the case of $\textrm{Err}^{1,2}_{i,\ve}$, taking the absolute value and bounding $v_{t_i,r}$, $\sgn$, $\varphi_\beta$, and $v_{0,t_i}$ in $L^\infty(U\times[t_i,t_{i+1}])$, for $C=C(U,T)>0$,
$$\abs{\textrm{Err}^{1,2}_{i,\ve}}\leq C\int_{t_i}^{t_{i+1}}\int_{\mathbb{R}^3}\int_{U^3}\left(p^1_r+q^1_r\right)\rho^{1,\ve}_{t_i,r}\abs{\partial_\xi\Pi^{x,\xi}_{r,r-t_i}-\partial_{\xi'}\Pi^{x',\xi'}_{r,r-t_i}}\abs{\partial_\eta\rho^{2,\ve}_{t_i,r}}.$$
It then follows from estimates \eqref{tu_26}, \eqref{tu_000}, and \eqref{u_01000} that, for $C=C(U,T)>0$,
$$\abs{\textrm{Err}^{1,2}_{i,\ve}}\leq C\omega(\abs{t_{i+1}-t_i};T)\int_{t_i}^{t_{i+1}}\int_\mathbb{R}\int_U\left(p^1_r+q^1_r\right).$$
Since the estimates for $\textrm{Err}^{2,2}_{i,\ve}$ and $\{\textrm{Err}^{j,4}_{i,\ve}\}_{j\in\{1,2\}}$ are virtually identical, we conclude that, for each $j\in\{1,2\}$, for $C=C(U,T)>0$,
\begin{equation}\label{tu_29} \abs{\textrm{Err}^{j,2}}+\abs{\textrm{Err}^{j,4}}\leq  C \omega(\abs{t_{i+1}-t_i};T)\int_{t_i}^{t_{i+1}}\int_\mathbb{R}\int_U\left(p^j_r+q^j_r\right).\end{equation}

Define the mesh of the partition
$$\abs{\mathcal{P}}:=\max_{i\in\{0,\ldots,N-1\}}\abs{t_{i+1}-t_i}.$$
It follows from \eqref{tu_28} and \eqref{tu_29} that, for $C=C(m,U,T)>0$,
\begin{equation}\label{tu_31}\begin{aligned} & \sum_{j=1}^2\sum_{i=0}^{N-1}\abs{\textrm{Err}^{j,1}_{i,\ve}}+\abs{\textrm{Err}^{j,2}_{i,\ve}}+\abs{2\textrm{Err}^{j,3}_{i,\ve}}+\abs{2\textrm{Err}^{j,4}_{i,\ve}} \\ &   \leq C\sum_{j=1}^2 \omega(\abs{\mathcal{P}};T)\int_0^T\int_\mathbb{R}\int_U(1+\abs{\xi}^{(m-1)\wedge 0})\left(p^j_r+q^j_r\right) \\ & \quad + C\sum_{j=1}^2\omega(\abs{\mathcal{P}};T)\int_0^T\int_U\left( \abs{u^j}^{m+1}+\abs{u^j}^{(m-1)\vee 0}\right). \end{aligned}\end{equation}
Lemma~\ref{interpolation} below, Proposition~\ref{aux_p} below, the boundedness of the domain, and H\"older's inequality imply that the righthand side of \eqref{tu_31} vanishes in the limit $\abs{\mathcal{P}}\rightarrow 0$.  It remains to analyze the error $\textrm{Err}^5_{i,\ve}$.

\emph{$\textrm{Err}^5_{i,\ve}$}:  The analysis of the final error term $\textrm{Err}^5_{i,\ve}$ defined in \eqref{tu_23} will be divided into three cases:  $m=1$, $m\in(2,\infty)$, or $m\in(0,1)\cup(1,2]$.  The simplest of these is the case $m=1$.  Indeed, if $m=1$, then it is immediate from \eqref{tu_23} that $\textrm{Err}^5_{i,\ve}=0$.

\textit{Case $m\in(2,\infty)$}:  The argument will proceed by a decomposition of the integral.  For each $\delta\in(0,1)$, fix a smooth function $K_\delta:\mathbb{R}\rightarrow[0,1]$ satisfying
$$K_\delta(\xi):=\left\{\begin{array}{ll} 1 & \textrm{if}\;\; 2\delta\leq \abs{\xi}\leq \frac{1}{\delta}, \\ 0 & \textrm{if}\;\;\abs{\xi}\leq \delta\;\;\textrm{or}\;\;\abs{\xi}\geq \frac{2}{\delta}.\end{array}\right.$$
Then, for each $\delta\in(0,1)$, for $\textrm{err}^5_{i,\ve}$ defined in \eqref{error_5}, we consider the decomposition of the integrand, for each $r\in[t_i,t_{i+1}]$,
\begin{equation}\label{tu_new}\begin{aligned}  \int_{\mathbb{R}^3}\int_{U^3}\left(\textrm{err}^5_{i,\ve}\right)v_{0,t_i}\varphi_\beta  = & 2\int_{\mathbb{R}^3}\int_{U^3}K_\delta(\xi)K_\delta(\xi')\left(\textrm{err}^5_{i,\ve}\right)v_{0,t_i}\varphi_\beta \\ &+ 2\int_{\mathbb{R}^3}\int_{U^3}(1-K_\delta(\xi)K_\delta(\xi'))\left(\textrm{err}^5_{i,\ve}\right)v_{0,t_i}\varphi_\beta.\end{aligned}\end{equation}
For the first term of \eqref{tu_new}, after multiplying the integrand by $\abs{\xi}^\frac{m-1}{2}\abs{\xi'}^\frac{m-1}{2}$ and its inverse, the integration by parts formula \eqref{equation_ibp}, the boundedness of $v_{0,t_i}$ and $\varphi_\beta$, and the definition of $K_\delta$ imply that, for
$$\zeta(\xi,\xi'):=K_\delta(\xi)K_\delta(\xi')\frac{\left(\abs{\xi}^\frac{m-1}{2}-\abs{\xi'}^\frac{m-1}{2}\right)^2}{\abs{\xi}^\frac{m-1}{2}\abs{\xi'}^\frac{m-1}{2}},$$
there exists $C=C(m,U,T)>0$ such that
\begin{equation}\begin{aligned}\label{tnw_1} & \abs{\int_{t_i}^{t_{i+1}}\int_{\mathbb{R}^3}\int_{U^3}K_\delta(\xi)K_\delta(\xi')\left(\textrm{err}^5_{i,\ve}\right)v_{0,t_i}\varphi_\beta} \\ & \leq C\int_{t_i}^{t_{i+1}}\int_\mathbb{R}\int_{U^3}\zeta(u^1,u^2)\abs{\nabla_x \left(u^1\right)^\frac{m+1}{2}}\overline{\rho}^{1,\ve}_{t_i,r}\abs{\nabla_{x'}\left(u^2\right)^\frac{m+1}{2}}\overline{\rho}^{2,\ve}_{t_i,r}.\end{aligned}\end{equation}
The local Lipschitz continuity of the map $\xi\mapsto\abs{\xi}^{\frac{m-1}{2}}$ away from the origin and \eqref{tnw_1} prove that, for a constant $C=C(m,U,T,\delta)>0$ that blows up as $\delta\rightarrow 0$,
\begin{equation}\begin{aligned}\label{tnw_2} &\abs{2\int_{t_i}^{t_{i+1}}\int_{\mathbb{R}^3}\int_{U^3}K_\delta(\xi)K_\delta(\xi')\left(\textrm{err}^5_{i,\ve}\right)v_{0,t_i}\varphi_\beta} \\ & \leq C\ve^2\int_{t_i}^{t_{i+1}}\int_\mathbb{R}\int_{U^3}\abs{\nabla_x \left(u^1\right)^\frac{m+1}{2}}\overline{\rho}^{1,\ve}_{t_i,r}\abs{\nabla_{x'}\left(u^2\right)^\frac{m+1}{2}}\overline{\rho}^{2,\ve}_{t_i,r}.\end{aligned}\end{equation}
Therefore, after applying H\"older's inequality and Young's inequality, the definition of the convolution kernel, the definition of the weight \eqref{c_w}, the boundedness of the domain, the definition of the parabolic defect measures, and \eqref{tnw_2} prove that, for $C=C(m,U,T,\delta)>0$,
\begin{equation}\label{tu_new_0} \abs{2\int_{t_i}^{t_{i+1}}\int_{\mathbb{R}^3}\int_{U^3}K_\delta(\xi)K_\delta(\xi')\left(\textrm{err}^5_{i,\ve}\right)v_{0,t_i}\varphi_\beta} \leq C\ve\int_{t_i}^{t_{i+1}}\int_\mathbb{R}\int_U\left(q^1_r+q^2_r\right).\end{equation}

For the second term on the righthand side of \eqref{tu_new}, we use the following inequality, which is a consequence of the mean value theorem,
$$\left(\abs{\xi}^{\frac{m-1}{2}}-\abs{\xi'}^{\frac{m-1}{2}}\right)^2\leq \abs{\frac{m-1}{2}}^2\left(\abs{\xi}^{m-3}+\abs{\xi'}^{m-3}\right)\abs{\xi-\xi'}^2.$$
Together with \eqref{tu_000}, after bounding the weight \eqref{c_w} and $\varphi_\beta$ in $L^\infty(U\times[0,T])$, this inequality implies that, for the kernel
$$\Psi^\ve:=\abs{\ve\nabla_x\left(\rho_{t_i,r}^{1,\ve} v^1_{t_i,r}\right)}\abs{\ve\nabla_{x'}\left(\rho_{t_i,r}^{2,\ve} v^2_{t_i,r}\right)},$$
for each $r\in [t_i,t_{i+1}]$, for $C=C(m,U,T)>0$,
\begin{equation}\label{tu_new_1}\begin{aligned} & \abs{\int_{\mathbb{R}^3}\int_{U^3}(1-K_\delta(\xi)K_\delta(\xi'))\left(\textrm{err}^5_{i,\ve}\right)v_{0,t_i}\varphi_\beta} \\ & \leq C\int_{\mathbb{R}^3}\int_{U^3}(1-K_\delta(\xi)K_\delta(\xi'))\left(\abs{\xi}^{m-3}+\abs{\xi}^{m-1}\right)\abs{\chi^1_r}\abs{\chi^2_r}\Psi^\ve \\ & \quad + C\int_{\mathbb{R}^3}\int_{U^3}(1-K_\delta(\xi)K_\delta(\xi'))\left(\abs{\xi'}^{m-3}+\abs{\xi'}^{m-1}\right)\abs{\chi^1_r}\abs{\chi^2_r}\Psi^\ve.\end{aligned}\end{equation}
For the first term of \eqref{tu_new_1}, after bounding $\abs{\chi^2_r}$ in $L^\infty(U\times\mathbb{R}\times[t_i,t_{i+1}])$, a computation similar to \eqref{tu_0101} proves that, for each $r\in[t_i,t_{i+1}]$, for $C=C(m,U,T)>0$,
\begin{equation}\begin{aligned}\label{tnw_3} & \limsup_{\ve\rightarrow 0}\int_{\mathbb{R}^3}\int_{U^3}(1-K_\delta(\xi)K_\delta(\xi'))\left(\abs{\xi}^{m-3}+\abs{\xi}^{m-1}\right)\abs{\chi^1_r}\abs{\chi^2_r}\Psi^\ve \\ & \leq C\int_\mathbb{R}\int_U \left(1-K_\delta\left(\Xi^{x,\xi}_{t_i,r}\right)^2\right)\left(\abs{\Xi^{x,\xi}_{r,t_i}}^{m-3}+\abs{\Xi^{x,\xi}_{t_i,r}}^{m-1}\right)\abs{\tilde{\chi}^1_r} \\ & \leq C  \int_\mathbb{R}\int_U \left(1-K_\delta\left(\xi\right)^2\right)\left(\abs{\xi}^{m-3}+\abs{\xi}^{m-1}\right)\abs{\chi^1_r}.\end{aligned}\end{equation}
The definition of $K_\delta$, $m\in(2,\infty)$, properties of the kinetic function, and the boundedness of the domain imply that, for each $r\in[t_i,t_{i+1}]$ and $\delta\in(0,\frac{1}{2})$, for $C=C(m,U,T)>0$,
\begin{equation}\begin{aligned}\label{tnw_33} & \int_\mathbb{R}\int_U \left(1-K_\delta\left(\xi\right)^2\right)\left(\abs{\xi}^{m-3}+\abs{\xi}^{m-1}\right)\abs{\chi^1_r} \\ & \leq  C\left(\int_{-2\delta}^{2\delta}\abs{\xi}^{m-3}+\int_{\{\abs{\xi}\geq \frac{1}{\delta}\}}\int_U\abs{\xi}^{m-1}\abs{\chi^1}\right) \\  & \leq  C\left(\delta^{m-2}+\int_U\left(\abs{u^1}^m-\delta^{-m}\right)_+\right).\end{aligned}\end{equation}
The second term of \eqref{tu_new_1} is treated identically to the first, by obtaining \eqref{tnw_3} and \eqref{tnw_33} with $\chi^1$ replaced by $\chi^2$.  Therefore, returning to \eqref{tu_new_1}, after integrating over $r\in[t_i,t_{i+1}]$, for $C=C(m,U,T)>0$,
\begin{equation}\begin{aligned}\label{tnw_4} & \limsup_{\ve\rightarrow 0}\abs{\int_{t_i}^{t_{i+1}}\int_{\mathbb{R}^3}\int_{U^3}(1-K_\delta(\xi)K_\delta(\xi'))\left(\textrm{err}^5_{i,\ve}\right)v_{0,t_i}\varphi_\beta} \\ & \leq C\left(\delta^{m-2}\abs{t_{i+1}-t_i}+\int_{t_i}^{t_{i+1}}\int_U\left(\abs{u^1}^m-\delta^{-m}\right)_++\left(\abs{u^2}^m-\delta^{-m}\right)_+\right).\end{aligned}\end{equation}

Returning to $\textrm{Err}^5_{i,\ve}$ from \eqref{tu_23}, it follows from \eqref{tu_new_0} and \eqref{tnw_4} that, for $C_1=C_1(m,U,T,\delta)>0$ and $C_2=C_2(m,U,T)>0$,
$$\begin{aligned}  \limsup_{\ve\rightarrow 0}\sum_{i=0}^{N-1}\abs{\textrm{Err}^5_{i,\ve}}  & \leq \limsup_{\ve\rightarrow 0}C_1\ve\int_0^T\int_\mathbb{R}\int_U\left(q^1_r+q^2_r\right) \\ & \quad + \liminf_{\delta\rightarrow 0}C_2\left(\delta^{m-2}T+\int_0^T\int_U\left(\abs{u^1}^m-\delta^{-m}\right)_++\left(\abs{u^2}^m-\delta^{-m}\right)_+\right).\end{aligned}$$
Therefore, Lemma~\ref{lem_bdry}, $m\in(2,\infty)$, property (i) of Definition~\ref{sol_def}, and the dominated convergence theorem prove that
\begin{equation}\label{tu_3300}\limsup_{\ve\rightarrow 0}\sum_{i=0}^{N-1}\abs{\textrm{Err}^5_{i,\ve}}=0.\end{equation}
This completes the proof in the case $m\in(2,\infty)$.

\textit{Case $m\in(0,1)\cup(1,2]$}:  We will first form a decomposition to exclude large velocities.  For each $M>2$, let $\zeta_M:\mathbb{R}\rightarrow[0,1]$ denote a smooth cutoff function satisfying
\begin{equation}\label{fast_zeta} \zeta_M(\eta):=\left\{\begin{array}{ll} 1 & \textrm{if}\;\;\abs{\eta}\leq M, \\ 0 & \textrm{if}\;\;\abs{\eta}\geq M+1,\end{array}\right.\end{equation}
and, for $M>2$, consider the decomposition
\begin{equation}\begin{aligned}\label{fast_zeta_1}   \textrm{Err}^5_{i,\ve} = &  2\int_{t_i}^{t_{i+1}}\int_{\mathbb{R}^3}\int_{U^3} \left(\textrm{err}^5_{i,\ve}\right)v_{0,t_i}\varphi_\beta\zeta_M \\ & +2 \int_{t_i}^{t_{i+1}}\int_{\mathbb{R}^3}\int_{U^3} \left(\textrm{err}^5_{i,\ve}\right)v_{0,t_i}\varphi_\beta(1-\zeta_M).    \end{aligned} \end{equation}
For the second term of \eqref{fast_zeta_1}, the definition of the convolution kernel and the characteristics imply that, whenever
$$\rho^\ve_{t_i,r}(x,y,\xi,\eta)\rho^\ve_{t_i,r}(x',y,\xi',\eta)(1-\zeta_M(\eta))\neq 0,$$
we have, for $c=c(T)>0$,
\begin{equation}\label{fast_zeta_2} \min\{\abs{\xi},\abs{\xi'}\}\geq c(M-\ve).\end{equation}
Since the map $\xi\mapsto\abs{\xi}^\frac{m-1}{2}$ is globally Lipschitz away from the origin it follows from \eqref{tu_000},  the boundedness of $\varphi_\beta$, $\abs{\chi^2}$, and $\abs{\chi^1}$, the definition of the weight \eqref{c_w}, and \eqref{fast_zeta_2} that, for $C=C(m,U,T)>0$,
$$\begin{aligned} & \abs{\int_{t_i}^{t_{i+1}}\int_{\mathbb{R}^3}\int_{U^3} \left(\textrm{err}^5_{i,\ve}\right)v_{0,t_i}\varphi_\beta(1-\zeta_M)} \\ & \leq C\int_{t_i}^{t_{i+1}}\int_{\mathbb{R}^3}\int_{U^3} \abs{\xi}\abs{\chi^1_r}\abs{\ve\nabla_x\left(\rho^{1,\ve}_{t_i,r}v^1_{t_i,r}\right)}\abs{\ve \nabla_{x'}\left(\rho^{2,\ve}_{t_i,r}v^2_{t_i,r}\right)}(1-\zeta_M) \\ & \quad +C\int_{t_i}^{t_{i+1}}\int_{\mathbb{R}^3}\int_{U^3} \abs{\xi'}\abs{\ve\nabla_x\left(\rho^{1,\ve}_{t_i,r}v^1_{t_i,r}\right)}\abs{\chi^2_r}\abs{\ve \nabla_{x'}\left(\rho^{2,\ve}_{t_i,r}v^2_{t_i,r}\right)}(1-\zeta_M). \end{aligned} $$
A computation similar to that leading from \eqref{tnw_3} to \eqref{tnw_4}, the definition of $\zeta_M$, and \eqref{fast_zeta_2} prove that, for each $M>2$, for $C=C(m,U,T)>0$ and $c=c(T)>0$,
\begin{equation}\begin{aligned} \label{fast_zeta_3} & \limsup_{\ve\rightarrow 0}\abs{\int_{t_i}^{t_{i+1}}\int_{\mathbb{R}^3}\int_{U^3} \left(\textrm{err}^5_{i,\ve}\right)v_{0,t_i}\varphi_\beta(1-\zeta_M)} \\ & \leq C\int_{t_i}^{t_{i+1}}\int_{U} \left(\abs{u^1}^2-\left(c(M-\ve)\right)^2\right)_++\left(\abs{u^2}^2-\left(c(M-\ve)\right)^2\right)_+. \end{aligned} \end{equation}
Proposition~\ref{aux_p} below and the dominated convergence theorem imply that the righthand side of \eqref{fast_zeta_3} vanishes in the limit $M\rightarrow\infty$.

For the first term of \eqref{fast_zeta_1}, we first observe that an approximation argument, Proposition~\ref{aux_log} below, and the integration by parts formula \eqref{equation_ibp} imply that, for the kernel
$$\tilde{\Psi}^\ve_M:=\overline{\rho}^{1,\ve}_{t_i,r}v^1_{t_i,r} \overline{\rho}^{2,\ve}_{t_i,r}v^2_{t_i,r}v_{0,t_i}\varphi_\beta\zeta_M,$$
for $C=C(m)>0$,
\begin{equation}\label{fast_1}\begin{aligned}& \int_{t_i}^{t_{i+1}}\int_{\mathbb{R}^3}\int_{U^3} \left(\textrm{err}^5_{i,\ve}\right)v_{0,t_i}\varphi_\beta\zeta_M \\ &= C\int_{t_i}^{t_{i+1}}\int_\mathbb{R}\int_{U^3}\psi(u_1,u_2)\abs{u^1}^{-\frac{1}{2}}\nabla \left(u^1\right)^{\left[\frac{m+1}{2}\right]}\cdot\abs{u^2}^{-\frac{1}{2}}\nabla\left(u^2\right)^{\left[\frac{m+1}{2}\right]}\tilde{\Psi}^\ve_M,\end{aligned}\end{equation}
where
$$\psi(\xi,\xi'):=\abs{\xi}^{\frac{2-m}{2}}\abs{\xi'}^{\frac{2-m}{2}}\left(\abs{\xi}^\frac{m-1}{2}-\abs{\xi'}^\frac{m-1}{2}\right)^2.$$

We will now construct a decomposition of \eqref{fast_1}, where we will need to exclude a neighborhood of zero to handle the singularity of the diffusion coefficient.  For each $\delta\in(0,1)$, let $\tilde{K}_\delta:\mathbb{R}\rightarrow\mathbb[0,1]$ denote a smooth cutoff function satisfying
$$\left\{\begin{array}{ll} \tilde{K}_\delta(\xi)=0 & \textrm{if}\;\;\abs{\xi}\leq \delta, \\ \tilde{K}_\delta(\xi)=1 & \textrm{if}\;\;\abs{\xi}\geq 2\delta,\end{array}\right.$$
and returning to \eqref{fast_1} form the decomposition, for each $(y,\eta)\in U\times\mathbb{R}$, for $C=C(m)>0$,
\begin{equation}\label{ffast_2}\begin{aligned}  &\int_{t_i}^{t_{i+1}}\int_{\mathbb{R}^3}\int_{U^3} \left(\textrm{err}^5_{i,\ve}\right)v_{0,t_i}\varphi_\beta\zeta_M \\ & = C\int_{t_i}^{t_{i+1}}\int_\mathbb{R}\int_{U^3} \psi^\delta(u_1,u_2)\abs{u^1}^{-\frac{1}{2}}\nabla \left(u^1\right)^{\left[\frac{m+1}{2}\right]}\cdot\abs{u^2}^{-\frac{1}{2}}\nabla\left(u^2\right)^{\left[\frac{m+1}{2}\right]}\tilde{\Psi}^\ve_M \\ &\quad +  C\int_{t_i}^{t_{i+1}}\int_\mathbb{R}\int_{U^3} \tilde{\psi}^\delta(u_1,u_2)\abs{u^1}^{-\frac{1}{2}}\nabla \left(u^1\right)^{\left[\frac{m+1}{2}\right]}\cdot\abs{u^2}^{-\frac{1}{2}}\nabla\left(u^2\right)^{\left[\frac{m+1}{2}\right]}\tilde{\Psi}^\ve_M,\end{aligned}\end{equation}
where, for each $\delta\in(0,1)$, $\psi^\delta,\tilde{\psi}^\delta:\mathbb{R}^2\rightarrow\mathbb{R}$ are defined by
\begin{equation}\label{ffast_7}\psi^\delta(\xi,\xi'):=\tilde{K}_\delta(\xi)\tilde{K}_\delta(\xi')\psi(\xi,\xi')\;\textrm{and}\;\tilde{\psi}^\delta(\xi,\xi'):=(1-\tilde{K}_\delta(\xi)\tilde{K}_\delta(\xi'))\psi(\xi,\xi').\end{equation}

For the first term of \eqref{ffast_2}, the definition of the characteristics, the definition of $\zeta_M$ in \eqref{fast_zeta}, and \eqref{ffast_7} imply that, whenever
$$\psi^\delta(\xi,\xi')\zeta_M(\eta)\neq 0,$$
we have, for $c=c(T)>0$,
$$\delta\leq \abs{\xi},\abs{\xi'}\leq c(M+\ve).$$
The local Lipschitz continuity of the map $\xi\in\mathbb{R}\mapsto\abs{\xi}^{\frac{m-1}{2}}$ away from zero therefore implies that, for $C=C(m,M,\delta)>0$,
$$\abs{\psi^\delta(\xi,\xi')}\leq C\abs{\xi-\xi'}^2.$$
It then follows from Proposition~\ref{aux_log} below, H\"older's inequality, Young's inequality, and the definition of the parabolic defect measures that, since $\varphi\geq \varphi_\beta$, for $C=C(m,U,T,M,\delta)>0$,
\begin{equation}\label{ffast_3}\begin{aligned} &\abs{\int_{t_i}^{t_{i+1}}\int_\mathbb{R}\int_{U^3} \psi^\delta(u_1,u_2)\abs{u^1}^{-\frac{1}{2}}\nabla \left(u^1\right)^{\left[\frac{m+1}{2}\right]}\cdot\abs{u^2}^{-\frac{1}{2}}\nabla\left(u^2\right)^{\left[\frac{m+1}{2}\right]}\tilde{\Psi}^\ve_M}
 \\ & \leq C\ve^2\abs{\int_{t_i}^{t_{i+1}}\int_\mathbb{R}\int_{U^3} \abs{u^1}^{-\frac{1}{2}}\nabla \left(u^1\right)^{\left[\frac{m+1}{2}\right]}\cdot\abs{u^2}^{-\frac{1}{2}}\nabla\left(u^2\right)^{\left[\frac{m+1}{2}\right]}\tilde{\Psi}^\ve_M} \\ & \leq C\ve(\int_{t_i}^{t_{i+1}}\int_\mathbb{R}\int_U\abs{\xi}^{-1}q^1_r\varphi\dx\dxi\dr)^\frac{1}{2}(\int_{t_i}^{t_{i+1}}\int_\mathbb{R}\int_U\abs{\xi'}^{-1}q^2_r\varphi\dxp\dxip\dr)^\frac{1}{2} \\ & \leq C\ve\left(1+\norm{u_0^1}^2_{L^2(U)}+\norm{u^2_0}^2_{L^2(U)}\right).\end{aligned}\end{equation}

For the second term of \eqref{ffast_2}, it follows from \eqref{tu_000} and \eqref{fast_zeta} that, for $C_1=C_1(T,M)>0$,
\begin{equation}\label{fast_2}\zeta_M(\eta)\overline{\rho}^{1,\ve}_{t_i,r}\cdot \overline{\rho}^{2,\ve}_{t_i,r}\neq 0\;\;\textrm{implies that}\;\;\abs{u^1-u^2}\leq C_1\ve.\end{equation}
Observe that if $\max\{{\abs{\xi},\abs{\xi'}\}}\leq 2C_1\ve$, then a direct computation yields, for $C=C(T,M)>0$ depending on $C_1$,
\begin{equation}\label{fast_3}\psi(\xi,\xi')\leq\abs{\xi}^\frac{m}{2}\abs{\xi'}^\frac{2-m}{2}+2\abs{\xi}^\frac{1}{2}\abs{\xi'}^\frac{1}{2}+\abs{\xi}^\frac{2-m}{2}\abs{\xi'}^\frac{m}{2}\leq C\ve.\end{equation}
If $\abs{\xi-\xi'}\leq C_1\ve$ with $\max\{{\abs{\xi},\abs{\xi'}\}}\geq 2C_1\ve$, assume without loss of generality that $\abs{\xi}\leq\abs{\xi'}$.  A Lipschitz estimate proves that, for $C=C(m,T,M)>0$ depending on $C_1$,
$$\left(\abs{\xi}^\frac{m-1}{2}-\abs{\xi'}^\frac{m-1}{2}\right)^2\leq C\min\{\abs{\xi},\abs{\xi'}\}^{m-3}\abs{\xi-\xi'}^2 \leq C\abs{\xi}^{m-3}\abs{\xi-\xi'}^2\leq C\abs{\xi}^{m-3}\ve^2.$$
Therefore, if $\abs{\xi-\xi'}\leq C_1\ve$ with $\max\{{\abs{\xi},\abs{\xi'}\}}\geq 2C_1\ve$,  which implies that $\abs{\xi}\geq \frac{1}{2}\abs{\xi'}$, for $C=C(m,T,M)>0$ depending on $C_1$,
\begin{equation}\label{fast_4}\psi(\xi,\xi')\leq C\abs{\xi}^\frac{2-m}{2}\abs{\xi'}^\frac{2-m}{2}\abs{\xi}^{m-3}\ve^2\leq C\abs{\xi}^{-1}\ve^2\leq C\ve.\end{equation}

The definition of $\tilde{\psi}^\delta$ in \eqref{ffast_7}, the fact that $\tilde{\psi}^\delta(\xi,\xi')=0$ on the set $\{\xi=\xi'\}$, and the fact that the set
$$\{u^1\neq u^2\}\subset\left(\{u^1\neq 0\}\cup\{u^2\neq 0\}\right),$$
imply with \eqref{fast_2}, \eqref{fast_3}, and \eqref{fast_4} that, for each $\delta\in(0,1)$, there exists $C=C(m,U,T,M)>0$ for which
\begin{equation}\label{ffast_8}\begin{aligned} &\abs{\int_{t_i}^{t_{i+1}}\int_\mathbb{R}\int_{U^3} \tilde{\psi}^\delta(u_1,u_2)\abs{u^1}^{-\frac{1}{2}}\nabla \left(u^1\right)^{\left[\frac{m+1}{2}\right]}\cdot\abs{u^2}^{-\frac{1}{2}}\nabla\left(u^2\right)^{\left[\frac{m+1}{2}\right]}\tilde{\Psi}^\ve_M} \\ & \leq C\ve\abs{\int_{t_i}^{t_{i+1}}\int_\mathbb{R}\int_{U}\int_{U^\delta_1}\abs{u^1}^{-\frac{1}{2}}\nabla \left(u^1\right)^{\left[\frac{m+1}{2}\right]}\cdot\abs{u^2}^{-\frac{1}{2}}\nabla\left(u^2\right)^{\left[\frac{m+1}{2}\right]}\tilde{\Psi}^\ve_M} \\ & \quad +C\ve\abs{\int_{t_i}^{t_{i+1}}\int_\mathbb{R}\int_{U^\delta_2}\int_U \abs{u^1}^{-\frac{1}{2}}\nabla \left(u^1\right)^{\left[\frac{m+1}{2}\right]}\cdot\abs{u^2}^{-\frac{1}{2}}\nabla\left(u^2\right)^{\left[\frac{m+1}{2}\right]}\tilde{\Psi}^\ve_M},\end{aligned}\end{equation}
where, for each $j\in\{1,2\}$,
\begin{equation}\label{ffast_9}U^\delta_j:=\left\{\;x\in U\;|\;0<\abs{u^j(x)}<2\delta\;\right\}.\end{equation}
Estimate \eqref{ffast_8} relies on the fact that $\tilde{\psi}^\delta(u^1,u^2)$ vanishes on the compliment of the set $(U^\delta_1\cup U^\delta_2)$.

For the first term of \eqref{ffast_8}, the definition of the parabolic defect measures, definition of the convolution kernel, H\"older's inequality, and Young's inequality prove that, since $\varphi\geq\varphi_\beta$, for $C=C(m,U,T,M)>0$,
$$\begin{aligned} & \limsup_{\ve\rightarrow 0}\abs{\ve\int_{t_i}^{t_{i+1}}\int_\mathbb{R}\int_{U}\int_{U^\delta_1} \abs{u^1}^{-\frac{1}{2}}\nabla \left(u^1\right)^{\left[\frac{m+1}{2}\right]}\cdot\abs{u^2}^{-\frac{1}{2}}\nabla\left(u^2\right)^{\left[\frac{m+1}{2}\right]}\tilde{\Psi}^\ve_M}\\ & \leq C(\int_{t_i}^{t_{i+1}}\int_\mathbb{R}\int_{U^\delta_1}\abs{\xi}^{-1}q^1_r\varphi\dx\dxi\dr)^\frac{1}{2}(\int_{t_i}^{t_{i+1}}\int_\mathbb{R}\int_{U}\abs{\xi'}^{-1}q^2_r\varphi \dx\dxi\dr)^\frac{1}{2}.\end{aligned}$$
Proposition~\ref{aux_log} below, \eqref{ffast_9}, and the dominated convergence theorem imply that the righthand side of \eqref{ffast_8} vanishes in the limit $\delta\rightarrow 0$.  The second term of \eqref{ffast_8} is handled identically, after swapping the roles of $\chi^1$ and $\chi^2$.

Returning to \eqref{ffast_2}, it follows from estimates \eqref{ffast_3} and \eqref{ffast_8} that, uniformly for $M>2$, after passing first to the limit $\ve\rightarrow 0$ and then to the limit $\delta\rightarrow 0$,
$$\limsup_{\ve\rightarrow 0}\abs{\int_{t_i}^{t_{i+1}}\int_{\mathbb{R}^3}\int_{U^3} \left(\textrm{err}^5_{i,\ve}\right)v_{0,t_i}\varphi_\beta\zeta_M}=0.$$
Returning to \eqref{fast_zeta_1}, after passing to the limit $M\rightarrow\infty$ in \eqref{fast_zeta_3}, we conclude that
\begin{equation}\label{tu_3400}\limsup_{\ve\rightarrow 0}\sum_{i=0}^{N-1}\abs{\textrm{Err}^5_{i,\ve}}= 0,\end{equation}
which completes the analysis of the error terms.

\textbf{Step 6:  The limit $\ve\rightarrow 0$.}  For $p_m=\left(\frac{m+1}{m}\wedge 2\right)$, from Lemma~\ref{lem_bdry} we have, for each $j\in\{1,2\}$, for $c_m=\frac{2m}{m+1}$,
\begin{equation}\label{tu_lem}u^{[m]}\in L^{p_m}([0,T];W^{1,p_m}_0(U))\;\;\textrm{with}\;\;\nabla \left(u^j\right)^{[m]}=c_m\abs{u^j}^{\frac{m-1}{2}}\nabla \left(u^j\right)^{\left[\frac{m+1}{2}\right]}.\end{equation}
Returning to \eqref{tu_24}, the integration by parts formula \eqref{equation_ibp} and \eqref{tu_lem} imply that
\begin{equation}\label{tu_36}\begin{aligned} & \int_{t_i}^{t_{i+1}}\int_{\mathbb{R}^2}\int_{U^2}m\abs{\xi}^{m-1}\chi^1_r \nabla_x\left(\rho^{1,\ve}_{t_i,r}v^1_{t_i,r}\right)\cdot \left(\tilde{\sgn}^\ve_{t_i,r}-2\tilde{\chi}^{2,\ve}_{t_i,r}\right)\nabla_y\left(v_{0,t_i}\varphi_\beta\right) \\ & = -\int_{t_i}^{t_{i+1}}\int_\mathbb{R}\int_U\left(\int_U \nabla(u^1)^{[m]}\overline{\rho}^{1,\ve}_{t_i,r}v^1_{t_i,r}\right)\cdot\left(\tilde{\sgn}^\ve_{t_i,r}-2\tilde{\chi}^{2,\ve}_{t_i,r}\right)\nabla_y\left(v_{0,t_i}\varphi_\beta\right).\end{aligned}\end{equation}
Using Fubini's theorem,
\begin{equation}\begin{aligned}\label{tuc_0} & \int_{t_i}^{t_{i+1}}\int_\mathbb{R}\int_U\left(\int_U \nabla(u^1)^{[m]}\overline{\rho}^{1,\ve}_{t_i,r}v^1_{t_i,r}\right)\cdot\left(\tilde{\sgn}^\ve_{t_i,r}-2\tilde{\chi}^{2,\ve}_{t_i,r}\right)\nabla_y\left(v_{0,t_i}\varphi_\beta\right) \\ & =\int_{t_i}^{t_{i+1}}\int_U \nabla\left(u^1\right)^{[m]}v_{t_i,r}\cdot\left(\int_\mathbb{R}\int_U\left(\tilde{\sgn}^\ve_{t_i,r}-2\tilde{\chi}^{2,\ve}_{t_i,r}\right)\overline{\rho}^{1,\ve}_{t_i,r}\nabla_y\left(v_{0,t_i}\varphi_\beta\right) \right).\end{aligned}\end{equation}
It follows from the definition of $\overline{\rho}^{1,\ve}_{t_i,r}$ that the inner integral has a convolution structure.  Precisely, for the convolution kernels $\rho^\ve_d$ and $\rho^\ve_1$ defining \eqref{tu_00}, and for
$$f^\ve:=\left(\tilde{\sgn}^\ve_{t_i,r}-2\tilde{\chi}^{2,\ve}_{t_i,r}\right)\nabla_y\left(v_{0,t_i}\varphi_\beta\right),$$
for each $(x,\xi,r)\in U\times\mathbb{R}\times[t_i,t_{i+1}]$, the inner integral of \eqref{tuc_0} has the form
\begin{equation}\label{tuc_1}  \left(f^\ve*\rho^\ve_1\rho^\ve_d\right)\left(x,\Pi^{x,u^1(x,r)}_{r,r-t_i}\right) =\int_\mathbb{R}\int_Uf^\ve(y,\eta)\rho^\ve_d(y-x)\rho^\ve_1(\eta-\Pi^{x,u^1(x,r)}_{r,r-t_i})\dy\deta.\end{equation}
Since the characteristics preserve the sign of the velocity variable, it follows from the boundedness of the $\sgn$ and kinetic functions, \eqref{tu_0101}, and \eqref{tu_010101} that, for each $p\in[1,\infty)$, strongly in $L^p_{\textrm{loc}}(\overline{U}\times\mathbb{R}\times[t_i,t_{i+1}])$,
\begin{equation}\label{tuc_2}\lim_{\ve\rightarrow 0}\left(\tilde{\sgn}^\ve_{t_i,r}(y,\eta)-2\tilde{\chi}^{2,\ve}_{t_i,r}(y,\eta)\right)=\sgn(\Xi^{y,\eta}_{t_i,r})-2\chi^2(y,\Xi^{y,\eta}_{t_i,r},r).\end{equation}
It follows from the triangle inequality, the fact that convolution does not increase $L^p$-norms, the inverse property of the characteristics \eqref{c_inverse}, and \eqref{tuc_2} that, for each $p\in[1,\infty)$, strongly in $L^p_{\textrm{loc}}(\overline{U}\times\mathbb{R}\times[t_i,t_{i+1}])$,
\begin{equation}\begin{aligned}\label{tuc_3}  & \lim_{\ve\rightarrow 0}\left(\left(\tilde{\sgn}^\ve_{t_i,r}-2\tilde{\chi}^{2,\ve}_{t_i,r}\right)\nabla_y\left(v_{0,t_i}\varphi_\beta\right)*\rho^\ve_1\rho^\ve_d\right)\left(x,\Pi^{x,u^1(x,r)}_{r,r-t_i}\right) \\ & =\left.\left(\sgn(\Xi^{x,\xi}_{t_i,r})-2\chi^2(x,\Xi^{x,\xi}_{t_i,r},r)\right)\nabla_x\left(v_{0,t_i}(x)\varphi_\beta(x)\right)\right|_{\xi=\Pi^{x,u^1(x,r)}_{r,r-t_i}} \\ & = \left(\sgn(u^1(x,r))-2\chi^2(x,u^1(x,r),r)\right)\nabla_x\left(v_{0,t_i}(x)\varphi_\beta(x)\right).\end{aligned}\end{equation}
Returning to \eqref{tuc_0}, it follows from \eqref{tuc_1}, \eqref{tuc_3}, and Lemma~\ref{lem_bdry} that
$$\begin{aligned} &\lim_{\ve\rightarrow 0}\int_{t_i}^{t_{i+1}}\int_\mathbb{R}\int_U\left(\int_U \nabla(u^1)^{[m]}\overline{\rho}^{1,\ve}_{t_i,r}v^1_{t_i,r}\right)\cdot\left(\tilde{\sgn}^\ve_{t_i,r}-2\tilde{\chi}^{2,\ve}_{t_i,r}\right)\nabla_y\left(v_{0,t_i}\varphi_\beta\right) \\ &  =\int_{t_i}^{t_{i+1}}\int_U\left(\nabla\left(u^1\right)^{[m]}v_{t_i,r}\right)\cdot\left(\sgn\left(u^1\right)-2\chi^2(x,u^1,r)\right) \nabla\left(v_{0,t_i}\varphi_\beta\right).\end{aligned}$$
Finally, properties of the kinetic function prove that
\begin{equation}\begin{aligned}\label{tu_3600} & \int_{t_i}^{t_{i+1}}\int_U\left(\nabla\left(u^1\right)^{[m]}v_{t_i,r}\right)\cdot\left(\sgn\left(u^1\right)-2\chi^2(x,u^1,r)\right) \nabla\left(v_{0,t_i}\varphi_\beta\right)\\ & =\int_{t_i}^{t_{i+1}}\int_U\left(\nabla\left(u^1\right)^{[m]}v_{t_i,r}\right)\cdot \sgn\left(u^1\right)\left(1-2\textbf{1}_{\left\{\abs{u^1}<\abs{u^2}\right\}}\right)\nabla\left(v_{0,t_i}\varphi_\beta\right). \end{aligned}\end{equation}
Therefore, after swapping the roles of $j\in\{1,2\}$ to obtain the analogue of \eqref{tu_36} to \eqref{tu_3600} for the second term of \eqref{tu_24}, estimates \eqref{tu_31}, \eqref{tu_3300}, and \eqref{tu_3400} show that, for $C=C(m,U,T)>0$,
\begin{equation}\label{tu_39}\begin{aligned} & \left.\int_{\mathbb{R}}\int_U\abs{\chi^1(y,\eta,r)-\chi^2(y,\eta,r)}v_{0,r}\varphi_\beta\right|_{r=0}^T \\ & \leq -\sum_{i=0}^{N-1}\int_{t_i}^{t_{i+1}}\int_U\left(\nabla\left(u^1\right)^{[m]}v_{t_i,r}\right)\cdot \sgn\left(u^1\right)\left(1-2\textbf{1}_{\left\{\abs{u^1}<\abs{u^2}\right\}}\right)\nabla\left(v_{0,t_i}\varphi_\beta\right) \\ & \quad -\sum_{i=0}^{N-1}\int_{t_i}^{t_{i+1}}\int_U\left(\nabla\left(u^2\right)^{[m]}v_{t_i,r}\right)\cdot \sgn\left(u^2\right)\left(1-2\textbf{1}_{\left\{\abs{u^2}<\abs{u^1}\right\}}\right)\nabla\left(v_{0,t_i}\varphi_\beta\right) \\ &\quad +\sum_{j=1}^2 C\omega(\abs{\mathcal{P}};T)\left(\int_0^T\int_\mathbb{R}\int_U(1+\abs{\xi}^{(m-1)\wedge 0})\left(p^j_r+q^j_r\right)+\int_0^T\int_U\left( \abs{u^j}^{m+1}+\abs{u^j}^{(m-1)\vee 0}\right)\right).\end{aligned} \end{equation}
This completes the analysis of the limit $\ve\rightarrow 0$.

\textbf{Step 7:  The limit $\abs{\mathcal{P}}\rightarrow 0$.}  We fix a nested sequence of partitions $\left\{\mathcal{P}_k\subset[0,T]\setminus\mathcal{N}\right\}_{k=1}^\infty$ satisfying $\abs{\mathcal{P}_k}\rightarrow 0$ as $k\rightarrow\infty$.  For each $k\in\mathbb{N}$, we will write $\{t_i^k\}_{i=0}^{N_k}$ for the elements of the partition $\mathcal{P}_k$, where $N_k$ denotes the total number of elements.

For the final two terms of \eqref{tu_39}, since the constants are independent of the partition, it follows from Lemma~\ref{interpolation} below, Proposition~\ref{aux_p} below, H\"older's inequality, and the boundedness of the domain that
\begin{equation}\begin{aligned}\label{tu_0043}  \lim_{k\rightarrow\infty} \left(\sum_{j=1}^2\right. &C\omega(\abs{\mathcal{P}_k};T)\int_0^T\int_\mathbb{R}\int_U(1+\abs{\xi}^{(m-1)\wedge 0})\left(p^j_r+q^j_r\right)\dx\dxi\dr \\  & \left.+ \sum_{j=1}^2C\omega(\abs{\mathcal{P}_k};T)\int_0^T\int_U \abs{u^j}^{m+1}+\abs{u^j}^{(m-1)\vee 0}\dx\dr\right)=0.\end{aligned}\end{equation}
For the first term of \eqref{tu_39}, for each $k\in\mathbb{N}$ and $i\in\{0,\ldots,N_k-1\}$,
\begin{equation}\label{nu_0041}\begin{aligned} & \int_{t_i^k}^{t_{i+1}^k}\int_U\left(\nabla\left(u^1\right)^{[m]}v_{t_i^k,r}\right)\cdot \sgn\left(u^1\right)\left(1-2\textbf{1}_{\left\{\abs{u^1}<\abs{u^2}\right\}}\right)\nabla\left(v_{0,t_i^k}\varphi_\beta\right) \\ & = \int_{t_i^k}^{t_{i+1}^k}\int_U\nabla\left(u^1\right)^{[m]}\left(v_{t_i^k,r}-1\right)\cdot \sgn\left(u^1\right)\left(1-2\textbf{1}_{\left\{\abs{u^1}<\abs{u^2}\right\}}\right)\nabla\left(v_{0,t_i^k}\varphi_\beta\right) \\ & \quad +\int_{t_i^k}^{t_{i+1}^k}\int_U\nabla\left(u^1\right)^{[m]}\cdot \sgn\left(u^1\right)\left(1-2\textbf{1}_{\left\{\abs{u^1}<\abs{u^2}\right\}}\right)\nabla\left(\left(v_{0,t_i^k}-v_{0,r}\right)\varphi_\beta\right) \\ & \quad +\int_0^T\int_U\left(\nabla\left(u^1\right)^{[m]}\right)\cdot \sgn\left(u^1\right)\left(1-2\textbf{1}_{\left\{\abs{u^1}<\abs{u^2}\right\}}\right)\nabla\left(v_{0,r}\varphi_\beta\right).\end{aligned}\end{equation}
For the first two terms on the righthand side of \eqref{nu_0041}, it follows from the definition of the weight \eqref{c_w} and the regularity of the coefficients that, after bounding the $\sgn$ and indicator functions in $L^\infty(U\times[0,T])$, for each $k\in\mathbb{N}$ and $i\in\{0,\ldots,N_k-1\}$, for $C=C(U,T)>0$,
\begin{equation}\begin{aligned}\label{tu_0042}& \abs{\int_{t_i^k}^{t_{i+1}^k}\int_U\nabla\left(u^1\right)^{[m]}\left(v_{t_i^k,r}-1\right)\cdot \sgn\left(u^1\right)\left(1-2\textbf{1}_{\left\{\abs{u^1}<\abs{u^2}\right\}}\right)\nabla\left(v_{0,t_i^k}\varphi_\beta\right)} \\ & + \abs{\int_{t_i^k}^{t_{i+1}^k}\int_U\nabla\left(u^1\right)^{[m]}\cdot \sgn\left(u^1\right)\left(1-2\textbf{1}_{\left\{\abs{u^1}<\abs{u^2}\right\}}\right)\nabla\left(\left(v_{0,t_i^k}-v_{0,r}\right)\varphi_\beta\right)} \\ &  \quad \leq  C \omega\left(\abs{t^k_{i+1}-t^k_i};T\right)\int_{t^k_i}^{t^k_{i+1}}\int_U \abs{\nabla\left(u^1\right)^{[m]}}\left(\varphi_\beta+\abs{\nabla\varphi_\beta}\right).\end{aligned}\end{equation}
Therefore, after summing over $i\in\{0,\ldots,N_k-1\}$ and passing to the limit $k\rightarrow\infty$, the choice of the partitions, Lemma~\ref{lem_bdry} below, \eqref{tu_39}, \eqref{tu_0043}, \eqref{nu_0041}, and \eqref{tu_0042} prove that
\begin{equation}\label{nu_41}\begin{aligned} & \left.\int_{\mathbb{R}}\int_U\abs{\chi^1(x,\xi,r)-\chi^2(x,\xi,r)}v_{0,r}\varphi_\beta\dy\deta\right|_{r=0}^T \\ & \leq -\int_0^T\int_U\left(\nabla\left(u^1\right)^{[m]}\right)\cdot \sgn\left(u^1\right)\left(1-2\textbf{1}_{\left\{\abs{u^1}<\abs{u^2}\right\}}\right)\nabla_y\left(v_{0,r}\varphi_\beta\right) \\ & \quad -\int_0^T\int_U\left(\nabla\left(u^2\right)^{[m]}\right)\cdot \sgn\left(u^2\right)\left(1-2\textbf{1}_{\left\{\abs{u^2}<\abs{u^1}\right\}}\right)\nabla_y\left(v_{0,r}\varphi_\beta\right).\end{aligned}\end{equation}
The righthand side of \eqref{nu_41} can be simplified.  For $\psi^1,\psi^2\in\C^\infty_c(U)$, observe the distributional equality
$$\begin{aligned} & \nabla\abs{\abs{\psi^1}-\abs{\psi^2}} \\ & =  \sgn(\psi^1)\left(1-2\textbf{1}_{\{\abs{\psi^1}<\abs{\psi^2}\}}\right)\nabla\psi^1 +\sgn(\psi^2)\left(1-2\textbf{1}_{\{\abs{\psi^2}<\abs{\psi^1}\}}\right)\nabla\psi^2.\end{aligned}$$
Therefore, it follows from \eqref{nu_41}, Lemma~\ref{lem_bdry} below, the density of smooth functions, and the monotonicity of the map $\xi\mapsto\xi^{[m]}$ that, after integrating by parts,
\begin{equation}\label{tu_40} \left.\int_{\mathbb{R}}\int_U\abs{\chi^1(y,\eta,r)-\chi^2(y,\eta,r)}v_{0,r}\varphi_\beta\dy\deta\right|_{r=0}^T \leq \int_0^T\int_U\abs{\abs{\left(u^1\right)^{[m]}}-\abs{\left(u^2\right)^{[m]}}}\Delta\left(v_{0,r}\varphi_\beta\right).\end{equation}
This completes the time-splitting argument.

\textbf{Step 8:  The limit $\beta\rightarrow 0$ and the conclusion.}  For each $\beta\in(0,1)$, recall the definition $\varphi_\beta=\textbf{1}_\beta\varphi$ from \eqref{tu_3} for $\varphi$ satisfying \eqref{u_aux} and $\textbf{1}_\beta$ satisfying \eqref{tu_1} and \eqref{tu_2}.  The Lipschitz continuity of $\varphi$ up to the boundary, Gilbarg and Trudinger \cite[Theorem~6.14]{GilbargTrudinger}, and the definition of $U_\beta$ from \eqref{tu_01} prove that, for $C=C(U)>0$ independent of $\beta\in(0,1)$,
$$\sup_{U\setminus U_\beta}\varphi \leq C\beta\;\;\textrm{and}\;\;\sup_U\abs{\nabla\varphi}\leq C.$$
Therefore, for $\textbf{1}_\beta$ defined in \eqref{tu_1} satisfying estimates \eqref{tu_2},  there exists $C=C(U,T)>0$ such that, for each $\beta\in(0,1)$ and $r\in[0,T]$,
$$\abs{\textbf{1}_\beta\Delta\left(\varphi v_{0,r}\right) -\Delta\left(\varphi_\beta v_{0,r}\right)}\leq \frac{C}{\beta}\textbf{1}_{U\setminus U_\beta}\;\;\textrm{on}\;\;U.$$
Returning to \eqref{tu_40}, it follows that, for $C=C(U,T)>0$,
\begin{equation}\label{tu_42}\begin{aligned} & \int_0^T\int_U\abs{\abs{\left(u^1\right)^{[m]}}-\abs{\left(u^2\right)^{[m]}}}\Delta\left(v_{0,r}\varphi_\beta\right) \\ & \leq  \int_0^T\int_U \abs{\abs{\left(u^1\right)^{[m]}}-\abs{\left(u^2\right)^{[m]}}}\textbf{1}_\beta\Delta\left(v_{0,r}\varphi\right)+ \frac{C}{\beta}\int_0^T\int_{U\setminus U_\beta}\abs{\abs{\left(u^1\right)^{[m]}}-\abs{\left(u^2\right)^{[m]}}}.\end{aligned}\end{equation}
For the second term of \eqref{tu_42}, Lemma~\ref{lem_bdry} below and the density of smooth functions imply following an explicit calculation that, for $C=C(U,T)>0$,
\begin{equation}\label{tu_00042}\frac{C}{\beta}\int_0^T\int_{U\setminus U_\beta}\abs{\abs{\left(u^1\right)^{[m]}}-\abs{\left(u^2\right)^{[m]}}}\leq C\int_0^T\int_{U\setminus U_\beta}\abs{\nabla\left(u^1\right)^{[m]}-\nabla\left(u^2\right)^{[m]}}.\end{equation}
The dominated convergence theorem and Lemma~\ref{lem_bdry} below prove that the righthand side of \eqref{tu_00042} vanishes in the limit $\beta\rightarrow 0$.  Therefore, returning to \eqref{tu_40},
\begin{equation}\label{tu_43} \left.\int_{\mathbb{R}}\int_U\abs{\chi^1(x,\xi,r)-\chi^2(x,\xi,r)}v_{0,r}\varphi\right|_{r=0}^T \leq \int_0^T\int_U\abs{\abs{\left(u^1\right)^{[m]}}-\abs{\left(u^2\right)^{[m]}}}\Delta\left(v_{0,r}\varphi\right).\end{equation}

In order to conclude, observe that definition \eqref{c_w} and \eqref{u_aux} imply that there exists $t_*\in(0,\infty)\setminus\mathcal{N}$ such that, for each $r\in[0,t_*]$,
\begin{equation}\label{tu_44}\Delta\left(v_{0,r}\varphi\right)\leq 0\;\;\textrm{in}\;\;U.\end{equation}
Therefore, since $T\in[0,\infty)\setminus\mathcal{N}$ was arbitrary, we conclude from \eqref{tu_43} that, for each $T\in[0,t_*]\setminus\mathcal{N}$,
\begin{equation}\label{nu_45}\left.\int_{\mathbb{R}}\int_U\abs{\chi^1(x,\xi,r)-\chi^2(x,\xi,r)}v_{0,r}\varphi\dy\deta\right|_{r=0}^T\leq 0.\end{equation}
The general statement now follows by induction.

Let $T\in(0,\infty)\setminus\mathcal{N}$ be arbitrary.  Then using \eqref{c_w}, there exists $\tilde{t}=\tilde{t}(T)\leq t_*\in(0,\infty)$ such that, whenever $0\leq s\leq t\leq T$ satisfy $\abs{s-t}\leq \tilde{t}$,
\begin{equation}\label{tu_46}\Delta\left(v_{s,t}\varphi\right)\leq 0\;\;\textrm{in}\;\;U.\end{equation}
Since $\tilde{t}\leq t_*$, it follows from the definition of the weight \eqref{c_w}, \eqref{nu_45}, and \eqref{tu_46} that, for $C=C(T)>0$,
\begin{equation}\label{tu_47}\norm{\left(u^1-u^2\right)\varphi}_{L^\infty([0,\tilde{t}];L^1(U))}\leq C\norm{\left(u^1_0-u^2_0\right)\varphi}_{L^1(U)}.\end{equation}
For the inductive hypothesis, for simplicity alone assume that $\{k\tilde{t}\}_{k\in\mathbb{N}}\subset[0,\infty)\setminus\mathcal{N}$ and suppose that, for some $k\in\mathbb{N}$ with $k\tilde{t}<T$, there exists $C=C(T)>0$ such that
\begin{equation}\label{tu_48}\norm{\left(u^1-u^2\right)v_{0,t}\varphi}_{L^\infty([0,k\tilde{t}];L^1(U))}\leq C\norm{\left(u^1_0-u^2_0\right)\varphi}_{L^1(U)}.\end{equation}
A repetition of the arguments leading to \eqref{tu_47} on the interval $[k\tilde{t},(k+1)\tilde{t}\wedge T]$, replacing $v_{0,r}$ with $v_{k\tilde{t},r}$, yields the inequality
$$\norm{\left(u^1-u^2\right)v_{k\tilde{t},t}\varphi}_{L^\infty([k\tilde{t},(k+1)\tilde{t}\wedge T];L^1(U))}\leq \norm{\left(u^1(\cdot,k\tilde{t})-u^2(\cdot,k\tilde{t})\right)\varphi}_{L^1(U)}.$$
The regularity of the coefficients, the definition of the weight \eqref{c_w}, and the inductive hypothesis \eqref{tu_48} therefore imply that, for $C=C(T)>0$,
$$\norm{\left(u^1-u^2\right)\varphi}_{L^\infty([0,(k+1)\tilde{t}\wedge T];L^1(U))}\leq C\norm{\left(u^1_0-u^2_0)\right)\varphi}_{L^1(U)}.$$
Since \eqref{tu_47} is the base case, this completes the inductive argument and therefore the proof.\end{proof}

The following corollary establishes uniform estimates for pathwise kinetic solutions in $L^1$, and proves that the nonnegativity of the initial data is preserved by the solution.  The proof is a consequence of the proof of Theorem~\ref{thm_unique}.

\begin{cor}\label{aux_cor} Let $u_0\in L^2(U)$.  Suppose that $u$ is a pathwise kinetic solution of \eqref{intro_eq} in the sense of Definition~\ref{sol_def} with initial data $u_0$.  Then, for $\varphi$ satisfying \eqref{u_aux}, for each $T>0$ there exists $C=C(T)>0$ such that
$$\norm{u\varphi}_{L^\infty([0,T]);L^1(U))}\leq C\norm{u_0\varphi}_{L^1(U)}.$$
Furthermore, if $u_0\in L^2_+(U)$, then $u\in L^\infty_{\textrm{loc}}([0,\infty);L^2_+(U))$.
\end{cor}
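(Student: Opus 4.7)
The plan is to deduce both statements by specializing the proof of Theorem~\ref{thm_unique} to a comparison between $u$ and the trivial pathwise kinetic solution $\tilde u\equiv 0$, whose kinetic function and defect measures vanish identically.

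For the $L^1$ estimate, I would rerun the eight-step argument of Theorem~\ref{thm_unique} with $u^1:=u$ and $u^2:=\tilde u$. Taking $\chi^2\equiv 0$ and $p^2\equiv q^2\equiv 0$ substantially simplifies the calculation: the mixed term of Step~3 and the parabolic cancellation of Step~4 become trivial, all error terms involving $\chi^2$, $p^2$, or $q^2$ vanish (in particular the delicate commutator error $\textrm{Err}^5_{i,\ve}$), and only the contribution of Step~2 for $\chi^1$ paired with $\tilde{\sgn}^\ve_{t_i,r}$ survives. The residual errors $\textrm{Err}^{1,1}_{i,\ve}$ and $\textrm{Err}^{1,2}_{i,\ve}$ vanish in the limit $\abs{\mathcal P}\to 0$ exactly as in \eqref{tu_28}--\eqref{tu_29}. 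Passing to the limits $\ve\to 0$ and $\abs{\mathcal P}\to 0$ in Steps~6--7, the analogue of \eqref{tu_43} reduces to
\[
\left.\int_\mathbb{R}\int_U\abs{\chi(x,\xi,r)}v_{0,r}\varphi\right|_{r=0}^T \leq \int_0^T\int_U \abs{u^{[m]}}\,\Delta(v_{0,r}\varphi).
\]
Since $\int_\mathbb{R}\abs{\chi}\,d\xi=\abs{u}$, the choice of $t_*\in(0,\infty)\setminus\mathcal N$ from Step~8 yields $\Delta(v_{0,r}\varphi)\leq 0$ on $[0,t_*]$ and hence $\norm{u\,v_{0,T}\varphi}_{L^1(U)}\leq\norm{u_0\varphi}_{L^1(U)}$ for small $T$. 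The induction of Step~8, combined with the uniform bounds on $v_{0,T}$ in $L^\infty(U)$, extends this to $\norm{u\varphi}_{L^\infty([0,T];L^1(U))}\leq C(T)\norm{u_0\varphi}_{L^1(U)}$ for arbitrary $T>0$.

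For the preservation of nonnegativity, I would combine the approximation scheme of Section~2 with uniqueness. A standard maximum principle argument applied to the regularized equation \eqref{reg_eq}, whose classical solvability is provided by Proposition~\ref{reg_exists}, yields $u^{\eta,\ve}\geq 0$ whenever $u_0\geq 0$ (the linear noise term preserves the sign of $u^{\eta,\ve}$ and the diffusion $\Delta(u^{\eta,\ve})^{[m]}+\eta\Delta u^{\eta,\ve}$ satisfies a comparison principle). Since the existence argument in Theorem~\ref{thm_exist} constructs a pathwise kinetic solution $\tilde u$ of \eqref{intro_eq} with initial data $u_0$ as a limit of the $u^{\eta,\ve}$, the limit satisfies $\tilde u\geq 0$ a.e.\ on $U\times[0,\infty)$; combined with the $L^\infty_tL^2_x$ estimate of Theorem~\ref{thm_exist}, this produces $\tilde u\in L^\infty_{\mathrm{loc}}([0,\infty);L^2_+(U))$. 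Uniqueness from Theorem~\ref{thm_unique}, which applies since $u_0\in L^2_+(U)$, then identifies $u$ with $\tilde u$.

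The main obstacle is the technical bookkeeping required to verify that every step of the uniqueness proof continues to apply when $u^2\equiv 0$ replaces a bona fide pathwise kinetic solution, and in particular that the regularity ingredients (Lemma~\ref{lem_bdry} and Propositions~\ref{aux_p}, \ref{aux_log}) invoked at multiple points of Theorem~\ref{thm_unique} remain available for the single solution $u$. Since these statements are formulated purely in terms of $u$, the parabolic defect measure $q$, and the boundary trace of $u^{[m]}$, they hold automatically from Definition~\ref{sol_def} without any hypothesis of nonnegativity on the initial data.
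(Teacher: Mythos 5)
Your treatment of the $L^1$ estimate is essentially the paper's: specialize the uniqueness machinery to $u^2\equiv 0$, note that all terms involving $\chi^2$, $p^2$, $q^2$ vanish (so in particular $\textrm{Err}^5_{i,\ve}$ and with it the only place Proposition~\ref{aux_log} is invoked), and run the eight steps. The paper phrases this more briefly by simply citing Theorem~\ref{thm_unique}, but your observation that the argument must be re-verified for $u^2\equiv 0$ (so that the nonnegativity hypothesis of Theorem~\ref{thm_unique} can be dispensed with) is the right one, and your remark that the relevant regularity inputs for a single solution follow from Definition~\ref{sol_def} alone is correct in this context.

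Your argument for nonnegativity preservation, however, is circular. You invoke Theorem~\ref{thm_unique} to identify $u$ with the nonnegative solution $\tilde u$ built from the approximation scheme. But in the fast diffusion and low porous-medium regime $m\in(0,1)\cup(1,2]$, the proof of Theorem~\ref{thm_unique} (Step~5, treatment of $\textrm{Err}^5_{i,\ve}$) relies on Proposition~\ref{aux_log}, and Proposition~\ref{aux_log}'s proof reads: ``An approximation argument, which relies crucially on the nonnegativity of the initial data and therefore the nonnegativity of the solution by Corollary~\ref{aux_cor}, implies that $\rho(x,\xi)=\log(\xi)\psi(x)$ is an admissible test function.'' In other words, Theorem~\ref{thm_unique} $\Rightarrow$ Proposition~\ref{aux_log} $\Rightarrow$ Corollary~\ref{aux_cor}(nonnegativity), which is exactly the statement you are trying to prove. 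The chain closes, and your argument does not establish the claim for $m\in(0,1)\cup(1,2]$.

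The paper avoids this by proving nonnegativity preservation \emph{intrinsically}, without appealing to the full uniqueness theorem: one reruns only the $\sgn$-term analysis (Steps~1, 2, 5, 6, 7, never touching the mixed term or $\textrm{Err}^5_{i,\ve}$), with $\sgn$ replaced by $\sgn_-:=\sgn\wedge 0$, to obtain
\[
\left.\int_\mathbb{R}\int_U\chi(x,\xi,r)\,\sgn_-(\xi)\,v_{0,r}\,\varphi\right|_{r=0}^T\leq \int_0^T\int_\mathbb{R}\int_U\abs{u^{[m]}\wedge 0}\,\Delta(v_{0,r}\varphi),
\]
which is nonpositive for small $T$ by \eqref{tu_44} and equals $\int_U(u)_-\,v_{0,T}\varphi$ at time $T$ and $\int_U(u_0)_-\,\varphi=0$ at time $0$. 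This requires only Lemma~\ref{lem_bdry}, Proposition~\ref{aux_p}, and Lemma~\ref{interpolation}, none of which presuppose nonnegativity of the solution, so there is no circularity. If you want to use your maximum-principle plus compactness route instead, you would need to additionally supply a uniqueness argument that does not pass through Proposition~\ref{aux_log}; the $\sgn_-$ trick is a far shorter fix.
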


\begin{proof}  Let $u_0\in L^2(U)$ be arbitrary, and let $u$ be a pathwise kinetic solution of \eqref{intro_eq} with initial data $u_0$.  For the first claim, since the zero function, with vanishing entropy and parabolic defect measures, is a pathwise kinetic solution of \eqref{intro_eq} with vanishing initial data, Theorem~\ref{thm_unique} implies that, for each $T>0$, for $C=C(T)>0$,
$$\begin{aligned} & \norm{u\varphi}_{L^\infty([0,T];L^1(U))}=\norm{\left(u-0\right)\varphi}_{L^\infty([0,T];L^1(U))}\\ & \leq C\norm{\left(u_0-0\right)\varphi}_{L^1(U)}=C\norm{u_0\varphi}_{L^1(U)},\end{aligned}$$
which completes the proof.

For the second claim, let $u_0\in L^2_+(U)$ be arbitrary and suppose that $u$ is a pathwise kinetic solution of \eqref{intro_eq} with initial data $u_0$, kinetic function $\chi$, and exceptional set $\mathcal{N}$.  Following the reasoning leading from \eqref{tu_7} to \eqref{tu_15} with the $\sgn$ function replaced by its negative part
$$\sgn_-:=(\sgn\wedge 0),$$
repeating the estimates of the error terms \eqref{tu_31}, and passing to the limit first with respect to $\ve\in(0,1)$ and second with respect to $\abs{\mathcal{P}}$ yields, after applying the integration by parts formula \eqref{equation_ibp} and using Lemma~\ref{lem_bdry} below, for each $T\in[0,\infty)\setminus\mathcal{N}$, 
$$\left.\int_\mathbb{R}\int_U\chi(x,\xi,r)\sgn_-(\xi)v_{0,r}\right|_{r=0}^T\leq -\int_0^T\int_\mathbb{R}\int_U \nabla u^{[m]}\sgn_-(u)\nabla(v_{0,r}\varphi).$$
Therefore, using the monotonicity of the map $\xi\mapsto\xi^{[m]}$, after integrating by parts, for each $T\in[0,\infty)\setminus\mathcal{N}$,
\begin{equation}\label{path_L1_1}\left.\int_\mathbb{R}\int_U\chi(x,\xi,r)\sgn_-(\xi)v_{0,r}\right|_{r=0}^T\leq \int_0^T\int_\mathbb{R}\int_U \abs{u^{[m]}\wedge 0}\Delta(v_{0,r}\varphi).\end{equation}
It follows from \eqref{tu_44} that, for all $T>0$ sufficiently small, the righthand side of \eqref{path_L1_1} is negative.  Therefore, for all $T>0$ sufficiently small, the definition of the kinetic function and the nonnegativity of the initial data imply that
$$0\leq \int_\mathbb{R}\int_U\chi(x,\xi,T)\sgn_-(\xi)v_{0,T}\varphi \leq \int_\mathbb{R}\int_U\overline{\chi}(u_0(x),\xi)\varphi=0.$$
The statement for general $T\in[0,\infty)\setminus\mathcal{N}$ follows by induction, as in the proof of Theorem~\ref{thm_unique}.  The $L^2$-estimate is true for general initial data $u_0\in L^2(U)$ and is the case $\delta=1$ in Proposition~\ref{aux_p} below.  \end{proof}

We conclude this section with a few auxiliary estimates that are necessary for the proof of uniqueness in the case $m\in(0,1)\cup(1,2]$.  In the arguments, we will repeatedly use the following estimate, which is immediate from the Poincar\'e inequality and the Dirichlet boundary conditions.  This is a simplified version of the interpolation estimate \cite[Lemma~4.2]{FehrmanGess}, which would be needed in the case of non-vanishing boundary data.

\begin{lem}\label{interpolation}  Suppose that $z\in\C^\infty_c(U)$ with $z^{\left[\nicefrac{m+1}{2}\right]}\in H^1_0(U)$. Then, for $C=C(U)>0$,
$$\norm{z}^{m+1}_{L^{m+1}(U)}=\norm{z^{\left[\frac{m+1}{2}\right]}}^2_{L^2(U)} \leq C\norm{\nabla z^{\left[\frac{m+1}{2}\right]}}_{L^2(U;\mathbb{R}^d)}^2.$$
\end{lem}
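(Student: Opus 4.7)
The plan is to handle the two pieces of the display separately: first establish the equality by a pointwise algebraic identity, then derive the inequality from the Poincar\'e inequality applied to $w := z^{[(m+1)/2]}$.

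For the equality, I would observe that by the definition $z^{[(m+1)/2]} = |z|^{(m-1)/2}z$, one has pointwise on $U$
\[
\bigl|z^{[(m+1)/2]}(x)\bigr|^{2} = |z(x)|^{m-1}\,|z(x)|^{2} = |z(x)|^{m+1},
\]
where the identity is trivial at points with $z(x)=0$ and is a direct computation otherwise. Integrating over $U$ gives $\|z\|_{L^{m+1}(U)}^{m+1} = \|z^{[(m+1)/2]}\|_{L^{2}(U)}^{2}$, which is the first assertion. Because $z\in\C^{\infty}_{c}(U)$, integrability on both sides is automatic.

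For the inequality, I would invoke the hypothesis $w = z^{[(m+1)/2]}\in H^{1}_{0}(U)$ together with the fact that $U\subset\mathbb{R}^{d}$ is bounded. The Poincar\'e inequality on $H^{1}_{0}(U)$ yields a constant $C = C(U)>0$ such that
\[
\|w\|_{L^{2}(U)}^{2} \leq C\,\|\nabla w\|_{L^{2}(U;\mathbb{R}^{d})}^{2}.
\]
Combining this with the equality established in the previous step gives exactly
\[
\|z\|_{L^{m+1}(U)}^{m+1} = \|z^{[(m+1)/2]}\|_{L^{2}(U)}^{2} \leq C\,\|\nabla z^{[(m+1)/2]}\|_{L^{2}(U;\mathbb{R}^{d})}^{2},
\]
as claimed.

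There is no substantive obstacle here: the smoothness of $z$ ensures that the pointwise algebraic manipulation is justified classically, and the hypothesis $z^{[(m+1)/2]}\in H^{1}_{0}(U)$ is precisely what is required to invoke Poincar\'e on the bounded domain. The lemma should therefore be viewed as a convenient packaging of these two elementary facts for repeated use in the uniqueness argument, where it controls $L^{m+1}$-moments of solutions in terms of gradient norms that are bounded by the parabolic defect measure.
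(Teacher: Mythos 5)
Your proof is correct and follows exactly the same route as the paper: the equality is the pointwise identity $|z^{[(m+1)/2)]}|^2 = |z|^{m+1}$ integrated over $U$, and the inequality is the Poincar\'e inequality for $z^{[(m+1)/2]}\in H^1_0(U)$ on the bounded domain $U$.
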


\begin{proof}  Suppose that $z\in\C^\infty_c(U)$ with
$$z^{\left[\frac{m+1}{2}\right]}\in H^1_0(U).$$
The first equality is immediate from the definitions.  The second inequality is the Poincar\'e inequality applied to the function $z^{[\frac{m+1}{2}]}$ on the domain $U$.  This completes the proof.  \end{proof}

In the following two estimates, we will obtain bounds for singular moments of the entropy and kinetic defect measures in a neighborhood of the origin.  The first estimate is particularly relevant in the fast diffusion case $m\in(0,1)$, and it is used in the proof of Lemma~\ref{lem_bdry} below to establish the regularity of the signed power $u^{[m]}$.

\begin{prop}\label{aux_p}  Let $\delta\in(0,1]$ and $u_0\in L^2(U)$ be arbitrary.  Suppose that $u$ is a pathwise kinetic solution of \eqref{intro_eq} in the sense of Definition~\ref{sol_def} with initial data $u_0$.  Then, for each $T>0$, there exists $C=C(m,U,T,\delta)>0$ such that
$$\norm{u}^{1+\delta}_{L^\infty([0,T];L^{1+\delta}(U))}+\int_0^T\int_\mathbb{R}\int_U \abs{\xi}^{\delta-1}\left(p+q\right)\leq C\left(\norm{u_0}^{1+\delta}_{L^{1+\delta}(U)}+\norm{u_0}_{L^2(U)}^{2\frac{m+\delta}{m+1}}\right).$$
\end{prop}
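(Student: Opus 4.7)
I plan to obtain the estimate by testing the kinetic equation \eqref{transport_equation} with a smooth, compactly supported approximation of the initial test function $\rho_0(x,\xi)=\sgn(\xi)|\xi|^\delta$. The transport construction \eqref{c_f} is essential here: because the characteristics \eqref{c_char} act multiplicatively on the velocity variable and preserve its sign, on the region where the velocity and spatial cutoffs are trivial the transported test function takes the clean factorised form
\begin{equation*}
\rho_{0,r}(x,\xi)=\sgn(\xi)\,|\xi|^\delta\,v_{0,r}(x)^{1+\delta},
\end{equation*}
which provides exactly the weight needed both to cancel the linear multiplicative noise and to generate the quantities $|u|^{1+\delta}v^{1+\delta}$ and $(p+q)|\xi|^{\delta-1}v^{1+\delta}$ in the resulting identity.

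\textbf{The main identity.} Inserting this $\rho_{0,r}$ into \eqref{transport_equation} and using the three elementary identities
\begin{equation*}
\int_\mathbb{R}\overline{\chi}(u,\xi)\sgn(\xi)|\xi|^\delta\,\dxi=\frac{|u|^{1+\delta}}{1+\delta},\quad \int_\mathbb{R} m|\xi|^{m-1}\overline{\chi}(u,\xi)\sgn(\xi)|\xi|^\delta\,\dxi=\frac{m\,|u|^{m+\delta}}{m+\delta},\quad \partial_\xi\!\left(\sgn(\xi)|\xi|^\delta\right)=\delta|\xi|^{\delta-1},
\end{equation*}
together with the fact that $v_{0,r}$ is bounded above and below on $[0,T]$ with $|\Delta v_{0,r}^{1+\delta}|\leq C(T,z)$, one is led, for each $T\in[0,\infty)\setminus\mathcal{N}$, to the inequality
\begin{equation*}
\frac{1}{1+\delta}\int_U|u(T)|^{1+\delta}v_{0,T}^{1+\delta}+\delta\int_0^T\!\!\int_\mathbb{R}\!\!\int_U(p+q)|\xi|^{\delta-1}v_{0,r}^{1+\delta}\leq \frac{1}{1+\delta}\int_U|u_0|^{1+\delta}+C(T,z)\int_0^T\!\!\int_U|u|^{m+\delta}.
\end{equation*}

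\textbf{Closing the estimate and bootstrapping.} To bound the last term, H\"older's inequality in space-time together with Lemma~\ref{interpolation} (Poincar\'e for $u^{[(m+1)/2]}\in H_0^1(U)$) give
\begin{equation*}
\int_0^T\!\!\int_U|u|^{m+\delta}\leq |U|^{(1-\delta)/(m+1)}\,T^{(1-\delta)/(m+1)}\left(\int_0^T\!\!\int_U|u|^{m+1}\right)^{(m+\delta)/(m+1)}.
\end{equation*}
For $\delta=1$ the right-hand side equals $\int\int|u|^{m+1}$, which by Lemma~\ref{interpolation} is controlled by the $\int\int\int q$ appearing on the left of the main identity; on a short initial interval $[0,T_*]$ on which $|\Delta v_{0,r}^{1+\delta}|$ (and hence $C(T,z)$) is sufficiently small, this term is absorbed into the left-hand side, yielding $\|u\|_{L^\infty_{[0,T_*]}L^2}^2+\int_0^{T_*}\!\int_\mathbb{R}\!\int_U(p+q)\leq C\|u_0\|_{L^2}^2$, and an induction over intervals of length $T_*$ mirroring the one at the end of the proof of Theorem~\ref{thm_unique} extends this $L^2$ bound to arbitrary $T>0$. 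For $\delta\in(0,1)$ one then substitutes this $L^2$ bound into the H\"older estimate above to obtain $\int_0^T\!\int_U|u|^{m+\delta}\leq C\|u_0\|_{L^2}^{2(m+\delta)/(m+1)}$, which combined with the main identity yields the claim.

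\textbf{Main obstacle.} The delicate step is making the formal test-function computation rigorous. Since $\sgn(\xi)|\xi|^\delta$ is neither smooth at the origin nor compactly supported in $\xi$, and $\mathbf{1}_U$ is inadmissible as a spatial test function, one approximates: let $\psi_{\tau,R}$ be a smooth odd function equal to $\sgn(\xi)|\xi|^\delta$ on $\tau\leq|\xi|\leq R$ and supported in $\tau/2\leq|\xi|\leq 2R$, paired with a smooth cutoff $h_n\in\C_c^\infty(U)$ approaching $\mathbf{1}_U$. The limit $R\to\infty$ is harmless thanks to the a priori $L^2$ control. The limit $\tau\to 0$ is the subtle one, because it captures precisely the singular moment being estimated; it is handled by a Fatou/monotone-convergence argument, using that the right-hand side in the $\psi_{\tau,R}$-version of the identity remains bounded uniformly in $\tau$ and $R$ by the bootstrap above. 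Finally, the limit $h_n\to\mathbf{1}_U$ produces boundary-layer terms involving $\nabla h_n$ and $\Delta h_n$ concentrated in a shrinking strip $U\setminus U_\beta$, which are shown to vanish using the boundary regularity $u^{[m]}\in L^{p_m}([0,T];W_0^{1,p_m}(U))$ from Lemma~\ref{lem_bdry} together with dominated convergence.
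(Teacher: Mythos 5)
Your proof is correct and follows essentially the same approach as the paper: test \eqref{transport_equation} with (an approximation of) $\rho_0(\xi)=\xi^{[\delta]}$, observe that $\rho_{0,r}(x,\xi)=\xi^{[\delta]}v_{0,r}(x)^{1+\delta}$, and derive the same pre-estimate before absorbing on a short time interval and iterating. The one place where you diverge is in how the right-hand side $\int m\xi^{[m+\delta-1]}\chi\,\Delta v_{0,r}^{1+\delta}$ is handled. The paper first applies the integration-by-parts formula \eqref{equation_ibp} to rewrite it as $-\tfrac{2m}{m+1}\int|u|^{(m+\delta)/2}|u|^{(\delta-1)/2}\nabla u^{[(m+1)/2]}\cdot\nabla v_{0,r}^{1+\delta}$ and then uses Young's inequality, producing the bound $C_1\norm{\nabla v_{0,\cdot}^{1+\delta}}_{L^\infty}\bigl(\int|u|^{m+\delta}+\int|\xi|^{\delta-1}q\bigr)$; the extra singular moment $\int|\xi|^{\delta-1}q$ is then re-absorbed into the left-hand side, which requires the time-splitting argument \emph{also} for $\delta\in(0,1)$. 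You instead bound the right-hand side directly by $\norm{\Delta v_{0,\cdot}^{1+\delta}}_{L^\infty}\,\tfrac{m}{m+\delta}\int|u|^{m+\delta}$, avoiding the appearance of the singular moment on the right entirely. This is legitimate (since $f_k\in\C^2_b$, $\Delta v_{0,r}^{1+\delta}$ is bounded, and it vanishes at $r=0$ so the small-time smallness is available for the $\delta=1$ base case), and it has the minor advantage that for $\delta\in(0,1)$ no absorption or iteration is needed once the $\delta=1$ bound is in hand. Both routes rely on the same ingredients (Lemma~\ref{interpolation}, the $\delta=1$ bootstrap, the approximation of the test function in $\xi$ near $0$ and $\infty$ and near $\partial U$) and arrive at the stated estimate; the paper's choice of integrating by parts first is the one that also adapts to the sharpened bound \eqref{aux_p_7} mentioned in the remark following the proposition.
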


\begin{proof}  Fix $\delta\in(0,1]$ and $u_0\in L^2(U)$ be arbitrary.  Suppose that $u$ is a pathwise kinetic solution of \eqref{intro_eq} with initial data $u_0$, kinetic function $\chi$, entropy and parabolic defect measures $p$ and $q$, and exceptional set $\mathcal{N}$.

An approximation argument implies that $\xi\mapsto\xi^{[\delta]}$ is an admissible test function.  Therefore, for each $T\in[0,\infty)\setminus\mathcal{N}$, after applying equation \eqref{transport_equation} on the interval $[0,T]$ to the test function $\rho(\xi)=\xi^{[\delta]}$, and using the identity
$$\Pi^{x,\xi}_{t,t}=\xi \exp\left(\sum_{k=1}^nf_k(x)z^k_{t,0}\right)=\xi v_{0,t}(x),$$
which is immediate from \eqref{c_char} and \eqref{c_w}, we have
\begin{equation}\label{aux_p_0} \left.\int_{\mathbb{R}}\int_U \chi_r\xi^{[\delta]}v_{0,r}^{1+\delta}\right|_{r=0}^T+\delta\int_0^T\int_\mathbb{R}\int_U\abs{\xi}^{\delta-1}\left(p+q\right)v_{0,r}^{1+\delta} =\int_0^T\int_\mathbb{R}\int_U m\xi^{[m+\delta-1]}\chi\Delta v_{0,r}^{1+\delta}.\end{equation}
The integration by parts formula \eqref{equation_ibp} implies that
$$ \int_0^T\int_\mathbb{R}\int_U m\xi^{[m+\delta-1]}\chi\Delta v_{0,r}^{1+\delta} =-\frac{2m}{m+1}\int_0^T\int_U \abs{u}^{\frac{m+\delta}{2}}\abs{u}^{\frac{\delta-1}{2}}\nabla u^{\left[\frac{m+1}{2}\right]}\cdot\nabla v_{0,r}^{1+\delta}.$$
Therefore, H\"older's inequality, Young's equality, and the definition of the parabolic defect measure imply that, for $C_1=C_1(m)>0$,
\begin{equation}\label{aux_p_1} \abs{\int_0^T\int_\mathbb{R}\int_U m\xi^{[m+\delta-1]}\chi\Delta v_{0,r}^{1+\delta}}\leq C_1\norm{\nabla v_0^{1+\delta}}_{L^\infty(U\times[0,T];\mathbb{R}^d)}\left(\int_0^T\int_U\abs{u}^{m+\delta}+\int_0^T\int_\mathbb{R}\int_U\abs{\xi}^{\delta-1}q \right).\end{equation}

We will first specialize to the case $\delta=1$.  It follows from part (i) of Definition~\ref{sol_def}, Lemma~\ref{interpolation}, and \eqref{aux_p_1} that, for $C_2=C_2(m,U)>0$,
$$\abs{\int_0^T\int_\mathbb{R}\int_U m\xi^{[m]}\chi\Delta v_{0,r}^2}\leq C_2\norm{\nabla v_0^2}_{L^\infty(U\times[0,T];\mathbb{R}^d)}\int_0^T\int_\mathbb{R}\int_U q.$$
The definition of the weight \eqref{c_w}, the continuity of the noise, and the regularity of the coefficients imply that there exists $t_*=t_*(m,U)\in(0,\infty)\setminus\mathcal{N}$ such that
$$\inf_{(x,r)\in U\times[0,t_*]} v_{0,r}^{2}-C_2\norm{\nabla v_0^2}_{L^\infty(U\times[0,T];\mathbb{R}^d)}\geq \frac{1}{2}.$$
Returning to \eqref{aux_p_0} with $\delta=1$, it follows from the definition of the kinetic function and the definition of the weight \eqref{c_w} that, for each $T\in[0,t_*]\setminus\mathcal{N}$, for $C=C(m,U,t_*)>0$,
$$\norm{u(\cdot,T)}^2_{L^2(U)}+\int_0^T\int_\mathbb{R}\int_U \left(p+q\right) \leq C\norm{u_0}^2_{L^2(U)}.$$
The statement for general $T\in[0,\infty)\setminus\mathcal{N}$ follows by induction, as in the proof of Theorem~\ref{thm_unique}.  Precisely, for each $T>0$, there exists $C=C(m,U,T)>0$ such that
\begin{equation}\label{aux_p_3}\norm{u}^2_{L^\infty([0,T];L^2(U))}+\int_0^T\int_\mathbb{R}\int_U (p+q)\leq C\norm{u_0}^2_{L^2(U)}.\end{equation}

We now consider the case of $\delta\in(0,1)$.  Returning to \eqref{aux_p_1}, H\"older's inequality, part (i) of Definition~\ref{sol_def}, and Lemma~\ref{interpolation} imply that, for $C=C(m,U,T)>0$,
\begin{equation}\label{aux_p_4}\int_0^T\int_U\abs{u}^{m+\delta}\leq C\left(\int_0^T\int_U\abs{u}^{m+1}\right)^{\frac{m+\delta}{m+1}}\leq C\left(\int_0^T\int_\mathbb{R}\int_Uq\right)^{\frac{m+\delta}{m+1}}.\end{equation}
The definition of the weight \eqref{c_w}, the continuity of the noise, and the regularity of the coefficients imply that there exists $t_*=t_*(m,\delta)\in(0,\infty)\setminus\mathcal{N}$ such that
\begin{equation}\label{aux_p_5}\inf_{(x,r)\in U\times[0,t_*]} \delta v_{0,r}^{1+\delta}-C_2\norm{\nabla v_0^{1+\delta}}_{L^\infty(U\times[0,T];\mathbb{R}^d)}\geq \frac{\delta}{2}.\end{equation}
Therefore, returning to \eqref{aux_p_0}, it follows from \eqref{aux_p_1}, \eqref{aux_p_3}, \eqref{aux_p_4}, and \eqref{aux_p_5} together with the definition of the kinetic function and the definition of the weight \eqref{c_w} that, for each $T\in[0,t_*]\setminus\mathcal{N}$, for $C=C(m,U,t_*,\delta)>0$,
$$ \norm{u(\cdot,T)}^{1+\delta}_{L^{1+\delta}(U)}+\int_0^T\int_\mathbb{R}\int_U\abs{\xi}^{\delta-1}\left(p+q\right) \leq C\left(\norm{u_0}^{1+\delta}_{L^{1+\delta}(U)}+\norm{u_0}_{L^2(U)}^{2\frac{m+\delta}{m+1}}\right).$$
The statement for general $T\in[0,\infty)\setminus\mathcal{N}$ follows by induction, as in the proof of Theorem~\ref{thm_unique} and \eqref{aux_p_3} above.  That is, for each $T>0$, there exists $C=C(m,U,T,\delta)>0$ such that
$$ \norm{u}^{1+\delta}_{L^\infty([0,T];L^{1+\delta}(U))}+\int_0^T\int_\mathbb{R}\int_U \abs{\xi}^{\delta-1}(p+q)\dx\dxi\dr \leq C\left(\norm{u_0}^{1+\delta}_{L^{1+\delta}(U)}+\norm{u_0}_{L^2(U)}^{2\frac{m+\delta}{m+1}}\right).$$
This completes the proof.  \end{proof}

\begin{remark}  We observe that by repeating the arguments of Lemma~\ref{lem_bdry} the conclusion of Proposition~\ref{aux_p} can be improved to show that, for each $T>0$, for $C=C(m,U,T,\delta)>0$,
\begin{equation}\label{aux_p_7}\norm{u}^{1+\delta}_{L^\infty([0,T];L^{1+\delta}(U))}+\int_0^T\int_\mathbb{R}\int_U \abs{\xi}^{\delta-1}\left(p+q\right)\dx\dxi\dr\leq C\norm{u_0}^{1+\delta}_{L^{1+\delta}(U)}.\end{equation}
For this, for each $\delta\in(0,1)$, it is only necessary to prove that, for each $T>0$,
$$u^{\left[\frac{m+\delta}{2}\right]}\in L^2([0,T];H^1_0(U))\;\;\textrm{with}\;\;\nabla u^{\left[\frac{m+\delta}{2}\right]}= \frac{m+\delta}{m+1}\abs{u}^{\frac{\delta-1}{2}}\nabla u^{\left[\frac{m+1}{2}\right]}.$$
It then follows from the Poincar\'e inequality that \eqref{aux_p_4} can be estimated by the singular moment of the parabolic defect measure appearing on the lefthand side of \eqref{aux_p_0}.  The time-splitting argument proves that this term can be absorbed, which yields \eqref{aux_p_7} and completes the proof.  Lemma~\ref{lem_bdry} proves this in the special case $\delta=m$, for diffusion exponents $m\in(0,1]$.  \end{remark}

The final proposition of this section establishes a bound for a singular moment.  This estimate effectively applies regularity in the case $\delta=0$ from Proposition~\ref{aux_p}.  Informally, this implies the local $L^2$-integrability of $\nabla u^{\frac{m}{2}}$, which was used in the proof of uniqueness for diffusion exponents $m\in(0,1)\cup (1,2]$.  We require here the nonnegativity of the initial data.  The estimate is false, in general, for signed initial data \cite[Remark~4.8]{FehrmanGess}.

\begin{prop}\label{aux_log}  Let $u_0\in L^2_+(U)$ be arbitrary.  Suppose that $u$ is a pathwise kinetic solution of \eqref{intro_eq} in the sense of Definition~\ref{sol_def} with initial data $u_0$.  Then, for every $\psi\in\C^\infty_c(U)$, for each $T>0$, there exists $C=C(m,U,T,\psi)>0$ such that
$$\int_0^T\int_\mathbb{R}\int_U \abs{\xi}^{-1}\left(p+q\right)\psi\leq C\left(1+\norm{u_0}_{L^2(U)}^2\right).$$
\end{prop}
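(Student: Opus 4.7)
The strategy is to apply the kinetic equation \eqref{transport_equation} to a test function whose $\xi$-derivative approximates $\xi^{-1}\psi(x)$, thereby producing the claimed singular moment of $p+q$ on the right-hand side. Nonnegativity of $u_0$, via Corollary~\ref{aux_cor}, forces $u\geq 0$ almost everywhere, so $\chi=\overline{\chi}(u,\xi)=\mathbf{1}_{\{0<\xi<u\}}$ and the defect measures $p,q$ are all supported on $\{\xi\geq 0\}$; we restrict every integral to $\xi>0$.

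For $\delta\in(0,1)$ and $M>1$, let $\phi_{\delta,M}\in\C^\infty_c((0,\infty);[0,\infty))$ be a smooth approximation of $\log(\xi/\delta)\mathbf{1}_{[\delta,M]}(\xi)$, supported in $[\delta/2,2M]$, with $\phi_{\delta,M}'(\xi)=\xi^{-1}$ on $[\delta,M]$ and $|\phi_{\delta,M}^{(k)}(\xi)|\leq C\xi^{-k}$ for $k\in\{0,1,2\}$. Taking $\rho_0(x,\xi):=\psi(x)\phi_{\delta,M}(\xi)$ in \eqref{transport_equation} and recalling that
$$\rho_{0,r}(x,\xi)=\psi(x)\,v_{0,r}(x)\,\phi_{\delta,M}\bigl(\xi v_{0,r}(x)\bigr),$$
the defect-measure contribution is
$$\int_0^{T'}\int_\mathbb{R}\int_U (p+q)\,\phi_{\delta,M}'(\xi v_{0,r})\,v_{0,r}^2\,\psi,$$
which by monotone convergence, as $\delta\to 0$ and $M\to\infty$, tends to $\int_0^{T'}\int_\mathbb{R}\int_U (p+q)\xi^{-1}v_{0,r}\psi$. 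Since $v_{0,r}$ is uniformly bounded above and below on $U\times[0,T]$, this dominates $c\int_0^{T'}\int_\mathbb{R}\int_U (p+q)\xi^{-1}\psi$ for some $c=c(T)>0$, which is the quantity we want to estimate.

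It thus suffices to bound the other two terms in \eqref{transport_equation} by $C(1+\|u_0\|_{L^2}^2)$ uniformly in $\delta,M$. The temporal boundary term $\int\!\int\chi\rho_{0,r}|_0^{T'}$, after $\xi$-integration, equals $\int_U \Phi_{\delta,M}(u v_{0,r})\psi$ with $\Phi_{\delta,M}(\eta):=\int_0^\eta\phi_{\delta,M}\leq C\eta(1+|\log\eta|)\leq C(1+\eta^2)$, so it is controlled by Proposition~\ref{aux_p} at $\delta=1$. For the diffusion term $\int\!\int\!\int m|\xi|^{m-1}\chi\Delta_x\rho_{0,r}$, expanding $\Delta_x\rho_{0,r}$ by the product rule yields a finite sum of terms of the form $\phi_{\delta,M}^{(k)}(\xi v_{0,r})\xi^k\,g_k(x)$ with $k\in\{0,1,2\}$ and $g_k$ a bounded smooth combination of $\nabla^\alpha v_{0,r}$ and $\nabla^\beta\psi$. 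Each $\xi$-integral $\int_0^u m\xi^{m-1}\phi_{\delta,M}^{(k)}(\xi v_{0,r})\xi^k\,d\xi$ is bounded by $v_{0,r}^{-k}u^m$ for $k\in\{1,2\}$ and by $u^m(1+|\log u|)$ for $k=0$; both are integrable on $U\times[0,T]$ with norms controlled by $\|u_0\|_{L^2}^2$ via Proposition~\ref{aux_p} and Lemma~\ref{interpolation}.

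Passing to the limits $\delta\to 0$, $M\to\infty$ (dominated convergence on the boundary and diffusion terms, monotone convergence on the defect term) yields the estimate on a short initial interval $[0,t_\ast]\setminus\mathcal{N}$; the general case follows by the same time-splitting/induction argument used in the proof of Proposition~\ref{aux_p}. The main technical obstacle is the diffusion term: the naive near-zero singularities of $\phi_{\delta,M}'$ and $\phi_{\delta,M}''$ look dangerous when composed with $\nabla v_{0,r}$, but they are exactly compensated by the factors of $\xi$ and $\xi^2$ produced by the chain rule on $\phi_{\delta,M}(\xi v_{0,r})$ together with the vanishing of $m|\xi|^{m-1}\chi$ at $\xi=0$ (requiring $m>0$), leaving only polynomial-in-$u$ expressions with at most a logarithmic enhancement. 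This is precisely why the estimate requires $u\geq 0$: absent nonnegativity, $\chi$ is supported on both signs and the singularity at $\xi=0$ prevents the cancellation from closing, consistent with the failure of the estimate for signed data \cite[Remark~4.8]{FehrmanGess}.
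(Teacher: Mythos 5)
Your overall strategy — test the kinetic equation with a function whose $\xi$-derivative approximates $\xi^{-1}\psi(x)$, move the defect term to the left, and bound the other two terms by quantities already controlled by Proposition~\ref{aux_p} and Lemma~\ref{interpolation} — is exactly the paper's approach, which tests with $\rho_0(x,\xi)=\log(\xi)\psi(x)$ (made admissible by approximation, using the nonnegativity of $u$ via Corollary~\ref{aux_cor}). You also correctly identify the role of $u\geq 0$ and that the $\xi,\xi^2$ factors from the chain rule tame the singularities of $\phi'_{\delta,M},\phi''_{\delta,M}$.

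However, your specific truncation introduces a gap. You require $\phi_{\delta,M}\in\C^\infty_c((0,\infty);[0,\infty))$ with $\phi_{\delta,M}$ approximating $\log(\xi/\delta)\mathbf{1}_{[\delta,M]}$, i.e., $\phi_{\delta,M}(\xi)\approx\log\xi-\log\delta$ on $[\delta,M]$. The insistence on nonnegativity forces a $\delta$-dependent shift of size $\log(1/\delta)$. Consequently $\Phi_{\delta,M}(\eta)=\int_0^\eta\phi_{\delta,M}\approx\eta\log(\eta/\delta)-\eta+\delta\to\infty$ as $\delta\to0$ for any fixed $\eta>0$, so the claimed uniform bound $\Phi_{\delta,M}(\eta)\leq C\eta(1+|\log\eta|)\leq C(1+\eta^2)$ is false: the constant would have to depend on $\delta$, which breaks the passage to the limit. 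The same $\log(1/\delta)$ divergence appears in the $k=0$ piece of the diffusion term, where $\int_0^u m\xi^{m-1}\phi_{\delta,M}(\xi v_{0,r})\,d\xi$ picks up a $-(\log\delta)\,u^m$ contribution that your bound $u^m(1+|\log u|)$ does not account for. As a result the dominated-convergence step you invoke on the boundary and diffusion terms has no dominating function. The divergences do in fact cancel against each other — the coefficient of $-\log\delta$ in the temporal term equals $\int u(T)v_{0,T}\psi-\int u_0\psi$, and the coefficient in the diffusion term equals $\int_0^T\int u^m\Delta(\psi v_{0,r})$, and these agree by the kinetic equation with the $\xi$-independent test function $\rho_0(x,\xi)=\psi(x)$ — but your proof neither notes nor uses this cancellation, and as written the separate bounds fail. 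The simplest repair is to drop the nonnegativity requirement and let $\phi_{\delta,M}$ approximate $\log\xi\cdot\mathbf{1}_{[\delta,M]}(\xi)$ directly (so it is negative for $\xi<1$); then $\Phi_{\delta,M}(\eta)\approx\eta\log\eta-\eta$ and the $k=0$ diffusion piece involve only $\xi^{m-1}\log\xi$, both of which are integrable near $0$ and give $\delta$-uniform bounds, which is what the paper's argument does. One smaller point: the paper does not need a time-splitting/induction argument here — since the defect term is nonnegative and sits on the left, and the right side is bounded on the full interval $[0,T]$ by Proposition~\ref{aux_p} (already proved for all $T$), the estimate closes directly.
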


\begin{proof}  Let $u_0\in L^2_+(U)$ and $\psi\in\C^\infty_c(U)$ be arbitrary.  Suppose that $u$ is a pathwise kinetic solution of \eqref{intro_eq} with initial data $u_0$, kinetic function $\chi$, entropy and parabolic defect measures $p$ and $q$, and exceptional set $\mathcal{N}$.  An approximation argument, which relies crucially on the nonnegativity of the initial data and therefore the nonnegativity of the solution by Corollary~\ref{aux_cor}, implies that $\rho(x,\xi)=\log(\xi)\psi(x)$ is an admissible test function.

Therefore, for each $T\in[0,T]\setminus\mathcal{N}$, we apply equation \eqref{transport_equation} with $\rho_0(x,\xi)=\log(\xi)\psi(x)$ on the interval $[0,T]$, for which $\rho_{0,r}$ defined in \eqref{c_f} satisfies
$$\begin{aligned}\rho_{0,r}(x,\xi) & =\log(\Pi^{x,\xi}_{r,r})\psi(x)v_{0,r}(x) \\ & =\log(\xi)\psi(x)v_{0,r}(x)+\left(\sum_{k=1}^nf_k(x)z^k_{r,0}\right)\psi(x)v_{0,r}(x),\end{aligned}$$
which follows from \eqref{c_char} and \eqref{c_w}, to obtain
\begin{equation}\begin{aligned}\label{alog_0}  & \left.\int_\mathbb{R}\int_U\chi(x,\xi,r)\log(\xi v_{0,r})\psi v_{0,r}\right|_{r=0}^T+\int_0^T\int_\mathbb{R}\int_U \abs{\xi}^{-1}(p+q)\psi v_{0,r} \\ & =  \int_0^T\int_\mathbb{R}\int_U m\abs{\xi}^{m-1}\chi\left(\log(\xi)\Delta\left(\psi v_{0,r}\right)+\sum_{k=1}^n\Delta(f_k \psi v_{0,r})z^k_{r,0}\right).  \end{aligned} \end{equation}

For the first term of \eqref{alog_0}, the integrability of the logarithm in a neighborhood of the origin, definition of the kinetic function, the definition of the weight \eqref{c_w}, and Proposition~\ref{aux_p} imply that, for $C=C(m,U,T,\psi)>0$,
\begin{equation}\begin{aligned} \label{alog_1}\abs{\left.\int_\mathbb{R}\int_U\chi(x,\xi,r)\log(\xi v_{0,r})\psi v_{0,r}\dx\dxi\right|_{r=0}^T} & \leq C\left(1+\norm{u}^2_{L^\infty([0,T];L^2(U))}\right) \\ & \leq C\left(1+\norm{u_0}^2_{L^2(U)}\right).\end{aligned}\end{equation}
For the righthand side of \eqref{alog_0}, the integrability of $\xi\mapsto \abs{\xi}^{m-1}\log(\xi)$ in a neighborhood of the origin, the definition of the kinetic function, the definition of the weight \eqref{c_w}, Lemma~\ref{interpolation}, and Proposition~\ref{aux_p} imply that, for $C=C(m,U,T,\psi)>0$,
\begin{equation}\begin{aligned}\label{alog_2} & \abs{\int_0^T\int_\mathbb{R}\int_U m\abs{\xi}^{m-1}\chi\left(\log(\xi)\Delta\left(\psi v_{0,r}\right)+\sum_{k=1}^n\Delta(f_k \psi v_{0,r})z^k_{r,0}\right)} \\ & \leq  C\left(1+\int_0^T\int_U\abs{u}^{m+1}\right) \leq C\left(1+\norm{u_0}^2_{L^2(U)}\right).\end{aligned}\end{equation}
Returning to \eqref{alog_0}, estimates \eqref{alog_1} and \eqref{alog_2} complete the proof.  \end{proof}

\section{Stable estimates and existence}

In this section, we will prove the existence of pathwise kinetic solutions by establishing stable estimates for the regularized equation, defined for each $\eta\in(0,1)$ and $\ve\in(0,1)$, 
\begin{equation}\label{char_reg} \left\{\begin{array}{ll} \partial_t u^{\eta,\ve}=\Delta \left(u^{\eta,\ve}\right)^{[m]}+\eta\Delta u^{\eta,\ve}+\sum_{k=1}^nf_k(x)u^{\eta,\ve}\dot{z}^{k,\ve}_t & \textrm{in}\;\;U\times(0,\infty), \\ u^{\eta,\ve}=u_0 & \textrm{on}\;\; U\times\{0\}, \\ u^{\eta,\ve}=0 & \textrm{on}\;\;\partial U\times(0,\infty),\end{array}\right.\end{equation}
where the smooth path $z^\ve$ is defined in \eqref{convolve_noise}.  The well-posedness of \eqref{char_reg} was previously established in Proposition~\ref{reg_exists}.

Let $u_0\in L^2(U)$, $\eta\in(0,1)$, and $\ve\in(0,1)$ be fixed but arbitrary.  For the solution $u^{\eta,\ve}$ from Proposition~\ref{char_reg}, we denote the kinetic function
\begin{equation}\label{char_kinetic_new} \chi^{\eta,\ve}(x,\xi,t):=\overline{\chi}(u^{\eta,\ve}(x,t),\xi),\end{equation}
we will write $p^{\eta,\ve}$ and $q^{\eta,\ve}$ for the corresponding entropy and parabolic defect measures.  It was shown in Proposition~\ref{char_keq} that the kinetic function $\chi^{\eta,\ve}$ is a distributional solution of the equation
$$\partial_t\chi^{\eta,\ve}= m\abs{\xi}^{m-1}\Delta \chi^{\eta,\ve}+\eta\Delta\chi^{\eta,\ve}-\partial_\xi \chi^{\eta,\ve}\sum_{k=1}^n \xi f_k(x)\dot{z}^\ve_t+\partial_\xi\left(p^{\eta,\ve}+q^{\eta,\ve}\right),$$
with initial data $\overline{\chi}(u_0,\xi)$.  Finally, in Proposition~\ref{c_eq}, for the smooth forward characteristic defined in \eqref{c_forward}, the transported kinetic function, defined for $s\geq 0$ by $\tilde{\chi}^{\eta,\ve}_{s,t}(x,\xi):=\chi^{\eta,\ve}(x,\Xi^{x,\xi,\ve}_{s,t},t)$, was shown to satisfy equation \eqref{c_eq_1} defined for a class of test function transported by the smooth backward characteristic \eqref{c_backward}.  We will now establish stable estimates for the solutions $\{u^{\eta,\ve}\}_{\eta,\ve\in(0,1)}$ which allow us to pass to the limit $\eta,\ve\rightarrow 0$.

We first obtain a stable estimate for the solution $L^2(U)$ and for the entropy and parabolic defect measures.  The proof is omitted, since it is virtually identical to the proof of Proposition~\ref{aux_p} in the case $\delta=1$.
\begin{prop}\label{char_stable_estimates} Let $\eta\in(0,1)$, $\ve\in(0,1)$, and $u_0\in L^2(U)$ be arbitrary.  For the solution $u^{\eta,\ve}$ of \eqref{char_reg} with initial data $u_0$, let $\chi^{\eta,\ve}$ denote the corresponding kinetic function and let $p^{\eta,\ve}$ and $q^{\eta,\ve}$ denote the corresponding entropy and parabolic defect measures.  For each $T>0$, there exists $C=C(m,U,T)>0$ such that
$$\norm{u^{\eta,\ve}}^2_{L^\infty([0,T];L^2(U))}+\int_0^T\int_\mathbb{R}\int_U \left(p^{\eta,\ve}+q^{\eta,\ve}\right)\dx\dxi\dr\leq C\norm{u_0}^2_{L^2(U)}.$$
\end{prop}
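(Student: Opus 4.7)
The plan is to mimic the proof of Proposition~\ref{aux_p} in the case $\delta=1$, with minor additional bookkeeping for the viscosity term $\eta\Delta u^{\eta,\ve}$. Since $z^\ve$ is smooth, \eqref{char_reg} is a classical uniformly parabolic PDE, all manipulations below are rigorously justified, and the constants can be chosen uniformly in $\eta,\ve\in(0,1)$ because the weight $v^\ve_{0,r}$ satisfies $v^\ve_{0,0}\equiv 1$ with $\sup_{\ve}\|v^\ve_{0,r}\|_{W^{1,\infty}(U\times[0,T])}$ finite and a modulus of continuity uniform in $\ve$, inherited from the uniform convergence $z^\ve\to z$ on $[0,T]$.

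First I would apply the transported kinetic equation \eqref{c_eq_1} with the approximation $\rho_0^M(\xi)=\xi\,\theta_M(\xi)$, where $\theta_M\in\C^\infty_c(\mathbb{R})$ equals $1$ on $[-M,M]$, and pass to the limit $M\to\infty$ using the (non-uniform in $\ve$) $L^2$-bound of Proposition~\ref{reg_exists} to fix $\eta,\ve$. Since $\Pi^{\ve,x,\xi}_{r,r}=\xi v^\ve_{0,r}(x)$ by \eqref{c_backward} and \eqref{c_weight}, we have $\rho^\ve_{0,r}(x,\xi)=\xi (v^\ve_{0,r}(x))^2$ and $\partial_\xi\rho^\ve_{0,r}=(v^\ve_{0,r})^2$. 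Combined with the identity $\int_\mathbb{R}\xi\,\overline{\chi}(u,\xi)\dxi=\tfrac{1}{2}u^2$, this yields
\begin{equation*}
\left.\tfrac{1}{2}\int_U(u^{\eta,\ve})^2(v^\ve_{0,r})^2\dx\right|_{r=0}^T+\int_0^T\!\!\int_\mathbb{R}\!\!\int_U (p^{\eta,\ve}+q^{\eta,\ve})(v^\ve_{0,r})^2 = \int_0^T\!\!\int_\mathbb{R}\!\!\int_U \bigl(m|\xi|^{m-1}+\eta\bigr)\chi^{\eta,\ve}\xi\,\Delta_x(v^\ve_{0,r})^2.
\end{equation*}
The crucial point is that transport by the characteristics has absorbed the noise into $v^\ve_{0,r}$, leaving a deterministic right hand side.

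Next I would bound the right hand side by integrating by parts in the spatial variable. By the analogue of \eqref{equation_ibp}, combined with the identity $\nabla(u^{\eta,\ve})^{[m+1]} = 2|u^{\eta,\ve}|^{\frac{m+1}{2}}\nabla(u^{\eta,\ve})^{[\frac{m+1}{2}]}$, the $m$-term becomes $-\tfrac{2m}{m+1}\int_0^T\!\!\int_U |u^{\eta,\ve}|^{\frac{m+1}{2}}\nabla(u^{\eta,\ve})^{[\frac{m+1}{2}]}\cdot\nabla(v^\ve_{0,r})^2$, and the $\eta$-term becomes $-\eta\int_0^T\!\!\int_U u^{\eta,\ve}\nabla u^{\eta,\ve}\cdot\nabla(v^\ve_{0,r})^2$. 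H\"older's and Young's inequalities, together with Lemma~\ref{interpolation} to absorb the leftover $\int(u^{\eta,\ve})^{m+1}$ into the parabolic defect measure, dominate the right hand side by $C_1\|\nabla(v^\ve_{0,r})^2\|_{L^\infty(U\times[0,T])}\int_0^T\!\!\int_\mathbb{R}\!\!\int_U(p^{\eta,\ve}+q^{\eta,\ve})$ for some $C_1=C_1(m,U)$ independent of $\eta,\ve$.

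To conclude I would choose $t_*=t_*(m,U,z)>0$ sufficiently small that $\inf(v^\ve_{0,r})^2\geq \tfrac{1}{2}$ and $C_1\|\nabla(v^\ve_{0,r})^2\|_{L^\infty}\leq \tfrac{1}{4}$ uniformly for $(\ve,r)\in(0,1)\times[0,t_*]$; the right hand side then absorbs into the left, yielding the desired estimate on $[0,t_*]$. The time-splitting induction from the final step of the proof of Theorem~\ref{thm_unique} (and from the proof of Proposition~\ref{aux_p}) extends the bound to arbitrary $T>0$. The only step requiring genuine care is verifying that all constants and moduli of continuity for $v^\ve$ are uniform in $\ve\in(0,1)$; this is the bookkeeping step easiest to slip on, but follows from the uniform convergence $z^\ve\to z$ on compact intervals and the $\C^2_b$-regularity of the $f_k$.
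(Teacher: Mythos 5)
Your proposal is correct and follows the route the paper itself indicates: the authors omit the proof of Proposition~\ref{char_stable_estimates} precisely because it is ``virtually identical to the proof of Proposition~\ref{aux_p} in the case $\delta=1$,'' and that is exactly what you carry out. You correctly use the transported kinetic equation \eqref{c_eq_1} in place of \eqref{transport_equation}, identify $\rho^\ve_{0,r}(x,\xi)=\xi(v^\ve_{0,r}(x))^2$ via $\Pi^{\ve,x,\xi}_{r,r}=\xi v^\ve_{0,r}(x)$, obtain the energy identity, integrate by parts, absorb with Young and Lemma~\ref{interpolation}, and close via the same smallness-in-time plus induction device. The uniformity in $\ve$ of $\|v^\ve_{0,\cdot}\|_{W^{1,\infty}}$ and of the modulus of continuity, coming from $z^\ve\to z$ uniformly on compacts, is the right justification for why $t_*$ and $C$ can be chosen independently of $\eta,\ve$. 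One small point that you pass over: the $\eta$-term produces, after Young's inequality, a leftover $\tfrac{\eta}{2}\int_U |u^{\eta,\ve}|^2$ as well as $\tfrac{\eta}{2}\int_U|\nabla u^{\eta,\ve}|^2$; the latter is (up to a constant) the mass of $p^{\eta,\ve}$, while the former needs the Poincar\'e inequality (already implicit in Lemma~\ref{interpolation}) applied to $u^{\eta,\ve}\in H^1_0(U)$ to be dominated by $C\,\eta\int_U|\nabla u^{\eta,\ve}|^2$, i.e.\ again by $\int p^{\eta,\ve}$. You only explicitly mention absorbing $\int|u^{\eta,\ve}|^{m+1}$ into $\int q^{\eta,\ve}$; spelling out that the $\eta$-remainder lands in $\int p^{\eta,\ve}$ would close the loop, but this is a one-line addition and the overall argument is sound.
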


We observe that the transport of the kinetic function corresponds to a simple transformation of the original equation \eqref{char_reg}.  For each $\eta\in(0,1)$ and $\ve\in(0,1)$, consider the transformation
\begin{equation}\label{char_sol_transform} \tilde{u}^{\eta,\ve}:=v^\ve_{0,t} u^{\eta,\ve}.\end{equation}
It is immediate by considering test functions $\rho_0\in \C^\infty_c(U)$ that are independent of $\xi\in\mathbb{R}$ in Proposition~\ref{c_eq} that the transformed solutions \eqref{char_sol_transform} are distributional solutions of the equation
\begin{equation}\label{char_sol_transform_eq} \left\{\begin{array}{l} \partial_t\tilde{u}^{\eta,\ve}=v^\ve_{0,t}\Delta\left( \left(\tilde{u}^{\eta,\ve}\right)^{[m]}\left(v^\ve_{0,t}\right)^{-m}\right)+\eta v^\ve_{0,t}\Delta \left(\tilde{u}^{\eta,\ve}\left(v^\ve_{0,t}\right)^{-1}\right), \\ \tilde{u}^{\eta,\ve}=u_0, \\ \tilde{u}^{\eta,\ve}=0, \end{array}\right.\end{equation}
for the domains defined by \eqref{char_reg}, where this equation implicitly uses the fact that the weights \eqref{c_weight} are positive.  The reason for introducing the transformed solutions \eqref{char_sol_transform} is that we cannot expect to obtain a uniform control of the time derivatives $\{\partial_t u^{\eta,\ve}\}_{\eta,\ve\in(0,1)}$ in the singular limit $\ve\rightarrow 0$.  However, as will be seen in Proposition~\ref{exist_time} below, the transformation effectively cancels the irregularity introduced by the noise, and the derivatives $\{\partial_t\tilde{u}^{\eta,\ve}\}_{\eta,\ve\in(0,1)}$ can be estimated uniformly in a negative Sobolev space.

The following corollary of Proposition~\ref{char_stable_estimates} is based on the work of Ebmeyer \cite{Ebmeyer}.  For each $s\in(0,1)$, the space $W^{s,m+1}(U)$ denotes the fractional Sobolev space defined by the norm, for $\psi\in\C^\infty_c(U)$,
$$\norm{\psi}_{W^{s,m+1}(U)}:=\norm{\psi}_{L^{m+1}(U)}+\left(\int_{U\times U}\frac{\abs{\psi(x)-\psi(y)}^{m+1}}{\abs{x-y}^{d+s(m+1)}}\dx\dy\right)^{\frac{1}{m+1}}.$$
In the precise formulation appearing here, with only small modifications, the proof follows from  \cite[Proposition~C.3]{FehrmanGess}, Lemma~\ref{interpolation}, and the definition of the weight \eqref{c_weight}.  We therefore omit the details.

\begin{cor}\label{exist_fractional}  Let $\eta\in(0,1)$, $\ve\in(0,1)$, and $u_0\in L^2(U)$ be arbitrary.  For $u^{\eta,\ve}$ the solution of \eqref{char_reg} from Proposition~\ref{char_reg} with initial data $u_0$, for each $T>0$ and $s\in\left(0,\frac{2}{m+1}\wedge 1\right)$, there exists $C=C(m,U,T,s)>0$ such that
$$\norm{u^{\eta,\ve}}^{m+1}_{L^{m+1}([0,T];W^{s,m+1}(U))}\leq C\norm{u_0}_{L^2(U)}^2.$$
In particular, for $\tilde{u}^{\eta,\ve}$ defined in \eqref{char_sol_transform}, for each $T>0$ and $s\in\left(0,\frac{2}{m+1}\wedge 1\right)$, there exists $C=C(m,U,T,s)>0$ such that
$$\norm{\tilde{u}^{\eta,\ve}}^{m+1}_{L^{m+1}([0,T];W^{s,m+1}(U))}\leq C\norm{u_0}_{L^2(U)}^2.$$
\end{cor}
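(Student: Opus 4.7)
The plan is to leverage the stable estimate already in hand: Proposition~\ref{char_stable_estimates} bounds $\int_0^T\int_\mathbb{R}\int_U q^{\eta,\ve}$ by $C\norm{u_0}_{L^2(U)}^2$, and by the definition of the parabolic defect measure this is precisely a uniform bound
$$\norm{\nabla (u^{\eta,\ve})^{\left[\frac{m+1}{2}\right]}}_{L^2([0,T];L^2(U;\mathbb{R}^d))}^2\leq C\norm{u_0}_{L^2(U)}^2.$$
Since $u^{\eta,\ve}$ vanishes on $\partial U$, the same is true of $(u^{\eta,\ve})^{\left[\frac{m+1}{2}\right]}$, so Lemma~\ref{interpolation} (the Poincar\'e inequality) upgrades this to a uniform bound of $(u^{\eta,\ve})^{\left[\frac{m+1}{2}\right]}$ in $L^2([0,T];H_0^1(U))$, and in particular gives the $L^{m+1}$-in-space-and-time bound that will serve as the lower-order piece of the fractional Sobolev norm.

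The main step is to convert the integer-order regularity of $f:=(u^{\eta,\ve})^{\left[\frac{m+1}{2}\right]}$ into fractional regularity of $u^{\eta,\ve}=f^{\left[2/(m+1)\right]}$. When $m\ge 1$ the exponent $\alpha:=\frac{2}{m+1}$ satisfies $\alpha\le 1$, so $\xi\mapsto \xi^{[\alpha]}$ is globally H\"older continuous with exponent $\alpha$, and
$$|u^{\eta,\ve}(x)-u^{\eta,\ve}(y)|^{m+1}\leq C|f(x)-f(y)|^{\alpha(m+1)}=C|f(x)-f(y)|^{2}.$$
Consequently the Gagliardo seminorm of $u^{\eta,\ve}$ of order $s$ in $L^{m+1}$ is controlled by the Gagliardo seminorm of $f$ of order $\frac{s(m+1)}{2}$ in $L^2$, which is finite whenever $\frac{s(m+1)}{2}<1$, i.e., $s<\frac{2}{m+1}\wedge 1=1$. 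When $m\in(0,1)$ the exponent $\alpha>1$ and $\xi\mapsto \xi^{[\alpha]}$ is instead locally Lipschitz with polynomial growth of the derivative; the pointwise estimate becomes
$$|u^{\eta,\ve}(x)-u^{\eta,\ve}(y)|^{m+1}\leq C\bigl(|f(x)|+|f(y)|\bigr)^{1-m}|f(x)-f(y)|^{m+1},$$
and a H\"older inequality in the $(x,y)$-integral distributes the extra weight onto the $L^{m+1}$-bound on $f$ (equivalently the $L^{(m+1)^2/2}$-bound from Sobolev embedding) while leaving a Gagliardo-type seminorm of $f$ of order just below $1$; this produces exactly the range $s<\frac{2}{m+1}$. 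This is the content of \cite[Proposition~C.3]{FehrmanGess}, which I will apply directly.

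For the bound on $\tilde{u}^{\eta,\ve}=v^\ve_{0,t}u^{\eta,\ve}$, observe that the weight $v^\ve_{0,t}(x)$ is smooth on $U$, bounded above and below uniformly on $U\times[0,T]$ in terms of $\norm{z}_{C^0([0,T];\mathbb{R}^n)}$ and $\norm{f_k}_{C^2_b}$, and in fact $\C^2$ in $x$ with uniform bounds. Since fractional Sobolev spaces $W^{s,m+1}(U)$ with $s\in(0,1)$ are stable under pointwise multiplication by smooth, bounded functions, the estimate for $\tilde{u}^{\eta,\ve}$ follows at once from the corresponding estimate for $u^{\eta,\ve}$ with a constant that absorbs the $x$-derivatives of $v^\ve_{0,t}$.

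The main obstacle is the fast-diffusion range $m\in(0,1)$: there $\alpha=\frac{2}{m+1}>1$ so the one-line H\"older inequality that handles $m\ge 1$ fails, and the proof must trade a power of $|f|$ for a sliver of Gagliardo regularity, which is precisely what forces the sharp threshold $s<\frac{2}{m+1}$ and saturates the statement. All other ingredients (the parabolic bound, the interpolation lemma, and the smoothness of the weight) are already at our disposal and are routine to combine.
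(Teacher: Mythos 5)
Your argument follows exactly the route the paper indicates: extract the uniform $L^2_tH^1_{0,x}$ bound on $(u^{\eta,\ve})^{[(m+1)/2]}$ from Proposition~\ref{char_stable_estimates}, invoke Lemma~\ref{interpolation}, transfer to fractional regularity of $u^{\eta,\ve}$ via the Ebmeyer-type estimate of \cite[Proposition~C.3]{FehrmanGess}, and use boundedness and smoothness of $v^\ve_{0,t}$ to pass to $\tilde u^{\eta,\ve}$. Only a minor slip: in the case $m\ge 1$ the constraint $\frac{s(m+1)}{2}<1$ reads $s<\frac{2}{m+1}=\frac{2}{m+1}\wedge 1$, not ``$=1$''.
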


We now obtain estimates for the family of time derivatives $\{\partial_t\tilde{u}^{\eta,\ve}\}_{\eta,\ve\in(0,1)}$.  The proof of the following proposition is essentially a consequence of equation \eqref{char_sol_transform_eq}, Lemma~\ref{lem_bdry}, and H\"older's inequality.  In what follows, for a Holder exponent $p\in(1,\infty)$, we will write $W^{-1,p}(U)$ for the dual space of $W^{1,p}_0(U)$.

\begin{prop}\label{exist_time}  Let $\eta\in(0,1)$, $\ve\in(0,1)$, and $u_0\in L^2(U)$.  For the solution $u^{\eta,\ve}$ from Proposition~\ref{char_reg} and $\tilde{u}^{\eta,\ve}$ defined in \eqref{char_sol_transform}, for each $T>0$ there exists $C=C(m,U,T)>0$ such that, for
$$p_m:=\left(\frac{m+1}{m}\wedge 2\right)\;\;\textrm{and the H\"older exponent}\;\;q_m=\frac{p_m}{p_m-1},$$
if $m\in[1,\infty)$,
$$\norm{\partial_t\tilde{u}^{\eta,\ve}}_{L^{p_m}([0,T];W^{-1,q_m}(U))}\leq C(\norm{u_0}^{\frac{2}{p_m}}_{L^2(U)}+\eta^\frac{1}{2}\norm{u_0}_{L^2(U)}),$$
and, if $m\in(0,1)$,
$$\norm{\partial_t\tilde{u}^{\eta,\ve}}_{L^{p_m}([0,T];W^{-1,q_m}(U))}\leq C(\norm{u_0}^{\frac{1+m}{2}}_{L^{1+m}(U)}+\norm{u_0}_{L^2(U)}^{\frac{2m}{m+1}}+\eta^\frac{1}{2}\norm{u_0}_{L^2(U)}).$$
\end{prop}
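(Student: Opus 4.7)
The starting observation is that, since $v^\ve_{0,t}>0$ and $\tilde u^{\eta,\ve}=v^\ve_{0,t}\,u^{\eta,\ve}$, the positive homogeneity of $\xi\mapsto\xi^{[m]}$ gives $(\tilde u^{\eta,\ve})^{[m]}(v^\ve_{0,t})^{-m}=(u^{\eta,\ve})^{[m]}$ and $\tilde u^{\eta,\ve}(v^\ve_{0,t})^{-1}=u^{\eta,\ve}$, so that \eqref{char_sol_transform_eq} collapses to
\[
\partial_t\tilde u^{\eta,\ve}=v^\ve_{0,t}\,\Delta(u^{\eta,\ve})^{[m]}+\eta\,v^\ve_{0,t}\,\Delta u^{\eta,\ve}.
\]
By Lemma~\ref{lem_bdry}, $(u^{\eta,\ve})^{[m]}\in L^{p_m}([0,T];W^{1,p_m}_0(U))$ with the chain rule $\nabla(u^{\eta,\ve})^{[m]}=\frac{2m}{m+1}|u^{\eta,\ve}|^{(m-1)/2}\nabla(u^{\eta,\ve})^{[(m+1)/2]}$, and $\|v^\ve_{0,t}\|_{W^{1,\infty}(U)}$ is bounded uniformly in $\ve$ by the continuity of $z$ and the regularity of the $f_k$.

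Testing against an arbitrary $\psi\in W^{1,q_m}_0(U)$, integrating both Laplacians by parts (legitimate by the above regularity and by Proposition~\ref{reg_exists} for the $\eta$-term), and applying H\"older's inequality with the conjugate pair $(p_m,q_m)$ yields the time-pointwise bound
\[
|\langle\partial_t\tilde u^{\eta,\ve}(\cdot,t),\psi\rangle|\leq C\|\psi\|_{W^{1,q_m}(U)}\bigl(\|\nabla(u^{\eta,\ve})^{[m]}(\cdot,t)\|_{L^{p_m}(U)}+\eta\|\nabla u^{\eta,\ve}(\cdot,t)\|_{L^{p_m}(U)}\bigr).
\]
Taking the supremum over $\|\psi\|_{W^{1,q_m}_0}\leq 1$ and then the $L^{p_m}$-norm in time reduces the proposition to uniform estimates on $\|\nabla(u^{\eta,\ve})^{[m]}\|_{L^{p_m}((0,T)\times U)}$ and $\eta\|\nabla u^{\eta,\ve}\|_{L^{p_m}((0,T)\times U)}$. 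The $\eta$-term is immediate: since $p_m\leq 2$ and $U$ is bounded, Proposition~\ref{char_stable_estimates} gives $\eta^{1/2}\|\nabla u^{\eta,\ve}\|_{L^2((0,T)\times U)}\leq C\|u_0\|_{L^2}$, whence $\eta\|\nabla u^{\eta,\ve}\|_{L^{p_m}((0,T)\times U)}\leq C\eta^{1/2}\|u_0\|_{L^2}$.

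For the main term I split into cases using the chain rule. When $m\geq 1$, I apply H\"older's inequality with exponents $2/p_m$ and $2m/(m-1)$ in the factorization $|\nabla(u^{\eta,\ve})^{[m]}|^{p_m}=(\tfrac{2m}{m+1})^{p_m}|u^{\eta,\ve}|^{p_m(m-1)/2}|\nabla(u^{\eta,\ve})^{[(m+1)/2]}|^{p_m}$; the computation reduces to controlling $\int_0^T\!\!\int_U|u^{\eta,\ve}|^{m+1}$ and $\int_0^T\!\!\int_U|\nabla(u^{\eta,\ve})^{[(m+1)/2]}|^2$, the former controlled by the latter via Lemma~\ref{interpolation} and the latter by $C\|u_0\|_{L^2}^2$ via Proposition~\ref{char_stable_estimates}, and arithmetic of the exponents produces $\|\nabla(u^{\eta,\ve})^{[m]}\|_{L^{p_m}((0,T)\times U)}\leq C\|u_0\|_{L^2}^{2/p_m}$, which is the first claim. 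When $m\in(0,1)$, so $p_m=2$ and $|u^{\eta,\ve}|^{m-1}$ is singular, I instead exploit the exact identity
\[
\|\nabla(u^{\eta,\ve})^{[m]}\|_{L^2((0,T)\times U)}^2=m\int_0^T\!\!\!\int_\mathbb{R}\!\!\int_U|\xi|^{m-1}q^{\eta,\ve}\,\dx\dxi\dr,
\]
which follows from the chain rule and the definition of $q^{\eta,\ve}$, and control the right-hand side by the smooth analogue of Proposition~\ref{aux_p} with $\delta=m$ applied to the regularized kinetic identity \eqref{c_eq_1} tested against $\rho_0(\xi)=\xi^{[m]}$; this yields $C(\|u_0\|_{L^{1+m}}^{1+m}+\|u_0\|_{L^2}^{4m/(m+1)})$, whose square root matches the second claim. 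The principal technical point is to establish this singular-moment bound uniformly in $\eta,\ve$ in the fast-diffusion regime, which is essentially the content of Lemma~\ref{lem_bdry} (the $\delta=m$ case for $m\in(0,1]$, as noted in the remark after Proposition~\ref{aux_p}); the remainder is routine H\"older and interpolation.
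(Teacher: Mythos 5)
Your proof is correct and takes essentially the same route as the paper's: rewrite $\partial_t\tilde u^{\eta,\ve}$ in divergence form via \eqref{char_sol_transform_eq} (your homogeneity observation simply makes the cancellation in that equation explicit), test against $\psi$, integrate by parts, bound $\|v^\ve_{0,t}\|_{W^{1,\infty}}$, apply H\"older and Poincar\'e, and then control $\|\nabla(u^{\eta,\ve})^{[m]}\|_{L^{p_m}}$ and $\eta\|\nabla u^{\eta,\ve}\|_{L^{p_m}}$ via the uniform estimates of Proposition~\ref{char_stable_estimates} and the (smooth analogue of the) chain-rule/moment computation of Lemma~\ref{lem_bdry} and Proposition~\ref{aux_p}. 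The paper compresses your case-by-case H\"older computation into the phrase ``analogously to Lemma~\ref{lem_bdry},'' so you have merely expanded the same argument.
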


\begin{proof}  Let $\eta\in(0,1)$, $\ve\in(0,1)$, and $u_0\in L^2(U)$ be arbitrary.  Let $u^{\eta,\ve}$ denote the solution from Proposition~\ref{char_reg}, and let $\tilde{u}^{\eta,\ve}$ be as in \eqref{char_sol_transform}.  Let $T>0$ be fixed but arbitrary.  It follow from the definition of the weight \eqref{c_weight}, equation \eqref{char_sol_transform_eq}, H\"older's inequality, and the Poincar\'e inequality that, for each $t\in[0,T]$ and $\psi\in \C^\infty_c(U)$, for $C=C(U,T)>0$,
$$ \abs{\int_U \partial_t\tilde{u}^{\eta,\ve}(\cdot,t)\psi\dx}\leq C\left(\norm{\nabla \left(u^{\eta,\ve}\right)^{[m]}(\cdot,t)}_{L^{p_m}(U;\mathbb{R}^d)}+\norm{\eta\nabla u^{\eta,\ve}}_{L^{p_m}(U;\mathbb{R}^d)}\right)\norm{\nabla\psi}_{L^{q_m}(U;\mathbb{R}^d)}.$$
Therefore, for each $t\in[0,T]$, for $C=C(U,T)>0$,
$$ \norm{\partial_t\tilde{u}^{\eta,\ve}(\cdot,t)}_{W^{-1,q_m}(U)} \leq C\left(\norm{\nabla \left(u^{\eta,\ve}\right)^{[m]}(\cdot,t)}_{L^{p_m}(U;\mathbb{R}^d)}+\norm{\eta\nabla u^{\eta,\ve}(\cdot,t)}_{L^{p_m}(U;\mathbb{R}^d)}\right).$$
After integrating in time and applying H\"older's inequality, since $p_m\leq 2$, for $C=C(U,T)>0$,
$$\begin{aligned}& \norm{\partial_t\tilde{u}^{\eta,\ve}}_{L^{p_m}\left([0,T];W^{-1,q_m}(U)\right)} \\ & \leq C\left(\norm{\nabla \left(u^{\eta,\ve}\right)^{[m]}(\cdot,t)}_{L^{p_m}(U\times [0,T];\mathbb{R}^d)}+\norm{\eta\nabla u^{\eta,\ve}(\cdot,t)}_{L^{p_m}(U\times[0,T];\mathbb{R}^d)}\right) \\ & \leq C\left(\norm{\nabla \left(u^{\eta,\ve}\right)^{[m]}(\cdot,t)}_{L^{p_m}(U\times [0,T];\mathbb{R}^d)}+\norm{\eta\nabla u^{\eta,\ve}(\cdot,t)}_{L^2(U\times [0,T];\mathbb{R}^d)}\right).\end{aligned}$$
It then follows analogously to Lemma~\ref{lem_bdry} and from Proposition~\ref{char_stable_estimates} and the definition of the entropy defect measure that, for $C=C(m,U,T)>0$, if $m\in(1,\infty)$, for which $p_m\in(1,2)$,
$$\norm{\partial_t\tilde{u}^{\eta,\ve}}_{L^{p_m}\left([0,T];W^{-1,q_m}(U)\right)}\leq C\left(\norm{u_0}^\frac{2}{p_m}_{L^2(U)}+\eta^\frac{1}{2}\norm{u_0}_{L^2(U)}\right),$$
and, if $m\in(0,1]$, for which $p_m=2$,
$$\norm{\partial_t\tilde{u}^{\eta,\ve}}_{L^2\left([0,T];W^{-1,2}(U)\right)}\leq C\left(\norm{u_0}^{\frac{1+m}{2}}_{L^{1+m}(U)}+\norm{u_0}_{L^2(U)}^{\frac{2m}{m+1}}+\eta^\frac{1}{2}\norm{u_0}_{L^2(U)}\right).$$
This completes the proof.  \end{proof}

We now establish the existence of pathwise kinetic solutions for initial data $u_0\in L^2(U)$.  The proof is a consequence of Corollary~\ref{exist_fractional}, Proposition~\ref{exist_time}, and the Aubins-Lions-Simon Lemma \cite{Aubin}, \cite{pLions}, and \cite{Simon}.   We remark that the necessity of an entropy defect in Definition~\ref{sol_def} arises from that fact that, after passing to a subsequence, the gradients
$$\left\{\nabla \left(u^{\eta,\ve}\right)^{\left[\frac{m+1}{2}\right]}\right\}_{\eta,\ve\in(0,1)}$$
will converge only weakly.  Due to the weak lower semi-continuity of the norm, the limit of the parabolic defect measures $\{p^{\eta,\ve}\}_{\eta,\ve\in(0,1)}$ may therefore overestimate the energy of the signed power of the limiting solution.  The total mass of the entropy defect measure quantifies this loss.

\begin{thm}\label{thm_exist}  For each $u_0\in L^2(U)$, there exists a pathwise kinetic solution of \eqref{intro_eq} in the sense of Definition~\ref{sol_def}.  Furthermore, for each $T>0$, the solution satisfies, for $C=C(m,U,T)>0$,
$$\norm{u}^2_{L^\infty([0,T];L^2(U))}+\int_0^T\int_\mathbb{R}\int_U(p+q)\dx\dxi\dr\leq C\norm{u_0}^2_{L^2(U)},$$
and, if $u_0\in L^2_+(U)$, then $u\in L^\infty_{\textrm{loc}}([0,\infty);L^2_+(U))$.
 \end{thm}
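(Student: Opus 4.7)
The plan is to construct a pathwise kinetic solution as the limit $\eta,\ve\to 0$ of the smooth solutions $u^{\eta,\ve}$ of the regularized equation \eqref{char_reg}, combining the stable estimates from Proposition~\ref{char_stable_estimates}, Corollary~\ref{exist_fractional}, and Proposition~\ref{exist_time} with the kinetic identity \eqref{c_eq_1} supplied by Proposition~\ref{c_eq}.

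First I would work with the transformed solutions $\tilde{u}^{\eta,\ve}=v^\ve_{0,t}u^{\eta,\ve}$ defined in \eqref{char_sol_transform}, because it is these and not the $u^{\eta,\ve}$ themselves that carry a uniform time-regularity estimate. Corollary~\ref{exist_fractional} provides a uniform bound in $L^{m+1}([0,T];W^{s,m+1}(U))$ for some $s\in(0,\tfrac{2}{m+1}\wedge 1)$, and Proposition~\ref{exist_time} provides a uniform bound on $\partial_t\tilde{u}^{\eta,\ve}$ in $L^{p_m}([0,T];W^{-1,q_m}(U))$. The Aubin-Lions-Simon lemma, applied via the compact embedding $W^{s,m+1}(U)\hookrightarrow\hookrightarrow L^{m+1}(U)$ followed by the continuous embedding $L^{m+1}(U)\hookrightarrow W^{-1,q_m}(U)$, then yields a subsequence and a limit $\tilde{u}\in L^{m+1}([0,T];L^{m+1}(U))$ with $\tilde{u}^{\eta,\ve}\to\tilde{u}$ strongly. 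Since the weights satisfy $v^\ve_{0,t}\to v_{0,t}$ uniformly on $U\times[0,T]$ and stay bounded away from zero, inverting the transformation gives $u^{\eta,\ve}\to u:=v_{0,t}^{-1}\tilde{u}$ strongly in $L^{m+1}([0,T];L^{m+1}(U))$ and, along a further subsequence, almost everywhere; hence $\chi^{\eta,\ve}\to\chi:=\overline{\chi}(u,\cdot)$ strongly in $L^p_{\mathrm{loc}}(U\times\mathbb{R}\times[0,T])$ for every $p\in[1,\infty)$. Proposition~\ref{char_stable_estimates} combined with weak-$*$ lower semicontinuity yields the claimed $L^\infty_tL^2_x$-estimate, and the uniform bound on $\nabla(u^{\eta,\ve})^{[(m+1)/2]}$ in $L^2([0,T];L^2(U;\mathbb{R}^d))$ yields, through weak convergence to $\nabla u^{[(m+1)/2]}$, the inclusion $u^{[(m+1)/2]}\in L^2([0,T];H^1_0(U))$ required by Definition~\ref{sol_def}(i).

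Next I would pass to the limit in \eqref{c_eq_1}. For fixed $0\leq s<t$ outside a Lebesgue null set and $\rho_0\in\C^\infty_c(U\times\mathbb{R})$, the test functions $\rho^\ve_{s,r}$ from \eqref{c_flow_1} converge to $\rho_{s,r}$ from \eqref{c_f} in $C^2(U\times\mathbb{R})$ uniformly in $r\in[s,t]$, since both the weight \eqref{c_weight} and the backward characteristic \eqref{c_backward} converge uniformly. The strong convergence of $\chi^{\eta,\ve}$ together with the uniform bounds yields convergence of the $m\abs{\xi}^{m-1}\chi^{\eta,\ve}\Delta\rho^\ve_{s,r}$ integral and of the boundary-in-time traces, while Proposition~\ref{char_stable_estimates} forces the $\eta\chi^{\eta,\ve}\Delta\rho^\ve_{s,r}$ term to vanish. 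The uniform total-mass bound on $p^{\eta,\ve}+q^{\eta,\ve}$ permits extracting a further subsequence along which $p^{\eta,\ve}+q^{\eta,\ve}\rightharpoonup\nu$ as nonnegative Radon measures on $U\times\mathbb{R}\times(0,\infty)$, and the identity \eqref{c_eq_1} passes to the limit to yield \eqref{transport_equation} with $p+q$ replaced by $\nu$. The remaining task is to split $\nu=p+q$ in the form prescribed by Definition~\ref{sol_def}: define $q$ by its prescribed density $\frac{4m}{(m+1)^2}\delta_0(\xi-u)\abs{\nabla u^{[(m+1)/2]}}^2$ and set $p:=\nu-q$. The weak lower semicontinuity of the $L^2$-norm under the weak convergence of the gradients shows $q\leq\nu$ as measures, so $p\geq 0$.

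The main obstacle is this final decomposition, specifically matching the Dirac-in-velocity structure of $q$ against the weak limit measure $\nu$ so that the inequality $q\leq\nu$ holds. This requires disintegrating $\nu$ in the velocity variable and comparing densities using the strong convergence of $u^{\eta,\ve}$ against the weak convergence of the gradients; the standard strategy, already implicit in \cite{FehrmanGess}, is to test the weak limit with approximate identities centered at $u^{\eta,\ve}(x,t)$ and invoke a lower-semicontinuity argument. Once $(p,q)$ are in place, the initial condition is recovered from the time continuity of $\tilde{u}^{\eta,\ve}$ at $s=0$, and the nonnegativity statement $u\in L^\infty_{\mathrm{loc}}([0,\infty);L^2_+(U))$ for $u_0\in L^2_+(U)$ follows immediately from Corollary~\ref{aux_cor} applied to the constructed pathwise kinetic solution.
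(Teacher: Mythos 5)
Your overall strategy matches the paper's: transform to $\tilde u^{\eta,\ve}=v^\ve_{0,t}u^{\eta,\ve}$, extract compactness via Aubin--Lions--Simon, pass to the limit in the kinetic identity of Proposition~\ref{c_eq}, define $q$ by its prescribed density, absorb the deficit into a nonnegative entropy measure $p$, and invoke Corollary~\ref{aux_cor} for the nonnegativity claim. However, there is a concrete gap in the compactness step. You propose the chain $W^{s,m+1}(U)\hookrightarrow\hookrightarrow L^{m+1}(U)\hookrightarrow W^{-1,q_m}(U)$, where the last embedding is asserted to be continuous. In the fast diffusion case $m\in(0,1)$ we have $p_m=2$ and $q_m=2$, so $W^{-1,q_m}(U)$ is the dual of $W^{1,2}_0(U)$; by Sobolev embedding the largest $L^r(U)$ that maps continuously into this dual is $L^{(2^*)'}(U)$ with $(2^*)'=\frac{2d}{d+2}$ (for $d\ge 3$). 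Since $\frac{2d}{d+2}\to 2$ as $d\to\infty$ while $m+1<2$, the inclusion $L^{m+1}(U)\hookrightarrow W^{-1,2}(U)$ fails whenever $m+1<\frac{2d}{d+2}$; for example $d=6$, $m=\tfrac14$ already breaks it. So as written, your Aubin--Lions--Simon argument does not cover the high-dimensional fast diffusion regime, which is exactly the regime this theorem is designed to settle.

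The paper's proof sidesteps this by deliberately weakening the bottom space: it downgrades the estimates of Corollary~\ref{exist_fractional} and Proposition~\ref{exist_time} to the $L^1$ scale, working with the triple $W^{s,1}(U)\hookrightarrow\hookrightarrow L^1(U)\hookrightarrow W^{-k,1}(U)$ for a fixed $k>d$; the continuity of $L^1(U)\hookrightarrow W^{-k,1}(U)$ then follows from the Sobolev embedding $W^{k,1}_0(U)\subset L^\infty(U)$, and $W^{-1,q_m}(U)\hookrightarrow W^{-k,1}(U)$ holds for $k$ large enough. The output is only strong convergence in $L^1([0,T];L^1(U))$ rather than in $L^{m+1}$, but this is all that is needed, since the remaining estimates (kinetic function convergence, weak convergence of signed powers, weak-$*$ convergence of the defect measures, convergence of weights and characteristics) are carried by Propositions~\ref{char_stable_estimates} and \ref{exist_time} directly. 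Replacing your middle and bottom spaces accordingly makes your argument go through unchanged; the rest of your proposal — in particular the decomposition $p:=\nu-q$ with $q\le\nu$ established by weak lower semicontinuity of the $L^2$ norm of the gradients — is the same argument as in the paper.
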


\begin{proof}  Let $u_0\in L^2(U)$ be arbitrary.  For each $\eta\in(0,1)$ and $\ve\in(0,1)$, let $u^{\eta,\ve}$ denote the solution of \eqref{char_reg}, and let $\tilde{u}^{\eta,\ve}$ be defined by \eqref{char_sol_transform}.  Proposition~\ref{char_stable_estimates} and the definition of the weight \eqref{c_weight} imply that, for each $T>0$,
\begin{equation}\label{te_0}\{\tilde{u}^{\eta,\ve}\}_{\eta,\ve\in(0,1)}\;\;\textrm{is bounded in}\;\;L^1([0,T];L^1(U)).\end{equation}
Corollary~\ref{exist_fractional} implies that, for each $s\in\left(0,\frac{2}{m+1}\wedge 1\right)$, for each $T>0$,
\begin{equation}\label{te_1}\{\tilde{u}^{\eta,\ve}\}_{\eta,\ve\in(0,1)}\;\;\textrm{is bounded in}\;\;L^1([0,T];W^{s,1}(U)).\end{equation}
Proposition~\ref{exist_time} implies that, for each $T>0$,
\begin{equation}\label{te_2}\{\partial_t\tilde{u}^{\eta,\ve}\}_{\eta,\ve\in(0,1)}\;\;\textrm{is bounded in}\;\;L^1([0,T];W^{-k,1}(U)),\end{equation}
where $W^{-k,1}(U)$ denotes the dual space of $W_0^{k,1}(U)$, and $k>d$ is fixed to guarantee by the Sobolev embedding theorem that
\begin{equation}\label{te_220}W^{k,1}_0(U)\subset L^\infty(U).\end{equation}
Therefore, the boundedness of the domain, the compactness of the embedding $W^{s,1}(U)\hookrightarrow L^1(U)$, the continuity of the embedding $L^1(U)\hookrightarrow W^{-k,1}(U)$ which relies on \eqref{te_220}, and \eqref{te_0}, \eqref{te_1}, and \eqref{te_2} imply with the Aubin-Lions-Simon lemma \cite{Aubin, pLions, Simon} that, for each $T>0$,
$$\{\tilde{u}^{\eta,\ve}\}_{\eta,\ve\in(0,1)}\;\;\textrm{is relatively pre-compact in}\;\;L^1([0,T];L^1(U)).$$
Therefore, after passing to a subsequence $\{(\eta_k,\ve_k)\rightarrow (0,0)\}_{k=1}^\infty$, there exists $\tilde{u}\in L^1_{\textrm{loc}}([0,\infty);L^1(U))$ such that, as $k\rightarrow\infty$, for each $T>0$,
$$\tilde{u}^{\eta_k,\ve_k}\rightarrow \tilde{u}\;\;\textrm{strongly in}\;\;L^1([0,T];L^1(U)).$$
It is then immediate from the definition of the weight \eqref{c_weight}, the weight \eqref{c_w}, and the transformation \eqref{char_sol_transform} that, for
\begin{equation}\label{te_transform}u(x,t):=v^{-1}_{0,t}(x)\tilde{u}(x,t),\end{equation}
as $k\rightarrow\infty$, for each $T>0$,
\begin{equation}\label{te_5}u^{\eta_k,\ve_k}\rightarrow u\;\;\textrm{strongly in}\;\;L^1([0,T];L^1(U)).\end{equation}
Furthermore, as $k\rightarrow\infty$, Proposition~\ref{char_stable_estimates} and the definition of the parabolic defect measure imply that
\begin{equation}\label{te_6}(u^{\eta_k,\ve_k})^{\left[\frac{m+1}{2}\right]}\rightharpoonup u^{\left[\frac{m+1}{2}\right]}\;\;\textrm{weakly in}\;\;L^2([0,T];H^1_0(\mathbb{R}^d)).\end{equation}

Let $\{\chi^{\eta,\ve}\}_{\eta,\ve\in(0,1)}$ denote the kinetic functions of $\{u^{\eta,\ve}\}_{\eta,\ve\in(0,1)}$ defined in \eqref{char_kinetic_new}.  Let $\chi$ denote the kinetic function of $u$ from \eqref{te_transform}.  It follows from \eqref{te_5} and the definition of the kinetic function \eqref{char_kinetic_new} that, as $k\rightarrow\infty$,
\begin{equation}\label{te_7}\chi^{\eta_k,\ve_k}\rightarrow \chi\;\;\textrm{strongly in}\;\;L^1([0,T];L^1(U\times\mathbb{R})).\end{equation}
Proposition~\ref{char_stable_estimates} implies that there exist positive measures $p'$ and $q'$ such that, passing to a further subsequence $k\rightarrow\infty$, for each $T>0$,
\begin{equation}\label{te_60} (p^{\eta_k,\ve_k},q^{\eta_k,\ve_k})\rightharpoonup^*(p',q')\;\;\textrm{weakly in}\;\;\BUC(U\times\mathbb{R}\times[0,T])^*,\end{equation}
where $\BUC$ denotes the space of bounded, uniformly continuous functions.  It follows from the weak lower semicontinuity of the norm and \eqref{te_60} that, in the sense of measures,
$$\delta_0(\xi-u(x,t))\frac{4m}{(m+1)^2}\abs{\nabla u^{\left[\frac{m+1}{2}\right]}}^2\leq q'.$$
We therefore define the parabolic defect measure
$$q=\delta_0(\xi-u(x,t))\frac{4m}{(m+1)^2}\abs{\nabla u^{\left[\frac{m+1}{2}\right]}}^2,$$
and the nonnegative entropy defect measure
\begin{equation}\label{te_8}p=p'+q'-q.\end{equation}
Finally, the continuity of the noise and the regularity of the coefficients together with definitions \eqref{c_weight} and \eqref{c_w} imply that, for each $t>0$, for each $s\in[0,t]$,
\begin{equation}\label{te_9} \lim_{\ve\rightarrow 0}\left(v^\ve_{r,t},\nabla v^\ve_{r,t}, \nabla^2v^\ve_{r,t}\right)=\left(v_{r,t},\nabla v_{r,t}, \nabla^2v_{r,t}\right),\end{equation}
strongly in $L^\infty(U\times[s,t];\mathbb{R}^{1+d+d^2})$, and definitions \eqref{c_backward} and \eqref{c_char} imply that, for each $t>0$, for each $s\in[0,t]$,
\begin{equation}\label{te_10} \lim_{\ve\rightarrow 0}\left(\Pi^{\ve,x,\xi}_{t,t-r},\nabla \Pi^{\ve,x,\xi}_{t,t-r}, \nabla^2\Pi^{\ve,x,\xi}_{t,t-r}, \partial_\xi \Pi^{\ve,x,\xi}_{t,t-r}\right)=\left(\Pi^{x,\xi}_{t,t-r},\nabla \Pi^{x,\xi}_{t,t-r}, \nabla^2\Pi^{x,\xi}_{t,t-r}, \partial_\xi \Pi^{x,\xi}_{t,t-r}\right),\end{equation}
strongly in $L^\infty(U\times\mathbb{R}\times[s,t];\mathbb{R}^{2+d+d^2})$.

The convergence \eqref{te_5} implies that, for a set of Lebesgue measure zero $\mathcal{N}\subset(0,\infty)$, for each $t\in(0,\infty)\setminus\mathcal{N}$,
$$\lim_{k\rightarrow\infty}\norm{u^{\eta_k,\ve_k}(\cdot,t)-u(\cdot,t)}_{L^1(U)}=0.$$
For the kinetic function $\chi$ of $u$, the convergences \eqref{te_6}, \eqref{te_60}, \eqref{te_9} and \eqref{te_10} and the definitions \eqref{te_7} and \eqref{te_8} imply with Proposition~\ref{c_eq} that, for each $s,t\in[0,\infty)\setminus\mathcal{N}$, for every $\rho_0\in \C^\infty_c(U\times \mathbb{R})$, for $\rho_{s,r}$ defined in \eqref{c_flow},
$$\left.\int_\mathbb{R}\int_U\chi\rho_{s,r}v_{s,r}\right|_{r=s}^{t} =  \int_s^t\int_\mathbb{R}\int_Um\abs{\xi}^{m-1}\chi\Delta\left(\rho_{s,r}v_{s,r}\right)-\int_s^t\int_\mathbb{R}\int_U\left(p(x,\xi,r)+q(x,\xi,r)\right)v_{s,r}\partial_\xi\rho_{s,r},$$
where the initial condition is achieved in the sense that, when $s=0$,
$$\int_\mathbb{R}\int_U\chi(x,\xi,0)\rho_{0,0}(x,\xi)v_{0,0}(x) =\int_\mathbb{R}\int_U\chi(x,\xi,0)\rho_0(x,\xi) =\int_\mathbb{R}\int_U\overline{\chi}(u_0(x),\xi)\rho_0(x,\xi).$$
It follows from the convergence \eqref{te_6}, the boundedness of the domain, and the Poincar\'e inequality that, for each $T>0$,
$$u^{\left[\frac{m+1}{2}\right]}\in L^2([0,T];H^1_0(U)).$$
This completes the proof that $u$ is a pathwise kinetic solution with initial data $u_0$.  The estimates are now a consequence of Corollary~\ref{aux_cor} and Proposition~\ref{aux_p}, which complete the proof.  \end{proof}

\begin{appendix}

\section{Boundary Conditions}

In the appendix, for a pathwise kinetic solution $u$, we prove the weak differentiability of $u^{[m]}$.  In particular, this implies that $u^{[m]}$ vanishes on the boundary of $U$ in the sense of a trace.

\begin{lem}\label{lem_bdry}  Let $u_0\in L^2(U)$.  Suppose that $u$ is a pathwise kinetic solution of \eqref{intro_eq} in the sense of Definition~\ref{sol_def} with initial data $u_0$.  Then, for $p_m:=(\frac{m+1}{m}\wedge 2)$, for each $T>0$,
$$u^{[m]}\in L^{p_m}\left([0,T];W^{1,p_m}_0(U)\right)\;\;\textrm{with}\;\;\nabla u^{[m]}=\frac{2m}{m+1}\abs{u}^{\frac{m-1}{2}}\nabla u^{\left[\frac{m+1}{2}\right]}.$$
Furthermore, for $C=C(m,U,T)>0$, if $m\in[1,\infty)$,
$$\norm{u^{[m]}}^{p_m}_{L^{p_m}([0,T];W^{1,p_m}_0(U))}\leq C\norm{u_0}^2_{L^2(U)},$$
and, if $m\in(0,1)$,
$$\norm{u^{[m]}}^2_{L^2([0,T];W^{1,2}_0(U))}\leq C\left(\norm{u_0}^{1+m}_{L^{1+m}(U)}+\norm{u_0}_{L^2(U)}^{\frac{4m}{m+1}}\right).$$
In particular, $u^{[m]}$ has vanishing trace on $\partial U\times(0,\infty)$.
\end{lem}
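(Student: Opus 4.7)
Set $w:=u^{\left[\frac{m+1}{2}\right]}$, which by part (i) of Definition~\ref{sol_def} lies in $L^2([0,T];H^1_0(U))$ for every $T>0$.  The pointwise identity $u^{[m]}=w^{[2m/(m+1)]}$ reduces the lemma to a chain-rule statement for the function $\phi(\xi):=\xi^{[2m/(m+1)]}$, whose formal derivative is $\phi'(\xi)=\frac{2m}{m+1}\abs{\xi}^{(m-1)/(m+1)}$ and which satisfies $\phi'(w)=\frac{2m}{m+1}\abs{u}^{(m-1)/2}$.  The plan is to verify the gradient formula via an approximation scheme, read off the quantitative bounds from Proposition~\ref{aux_p}, and deduce the vanishing trace from membership in $W^{1,p_m}_0(U)$.

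I would introduce a Lipschitz approximation $\phi_\delta:\mathbb{R}\to\mathbb{R}$ with $\phi_\delta(0)=0$, $\phi_\delta\to\phi$ pointwise, and $\phi_\delta'$ dominated by $\phi'$ in an appropriate sense.  In the fast diffusion regime $m\in(0,1)$, where $\phi'$ blows up at the origin, I take
\begin{equation*}
\phi_\delta'(\xi):=\frac{2m}{m+1}\left(\abs{\xi}+\delta\right)^{(m-1)/(m+1)},
\end{equation*}
which is bounded and satisfies $\phi_\delta'(\xi)\leq\phi'(\xi)$ since the exponent is negative.  In the porous medium regime $m\in[1,\infty)$, where $\phi'$ vanishes at the origin but grows at infinity, I take any globally Lipschitz approximation of $\phi$ satisfying $\abs{\phi_\delta'}\leq\phi'$, e.g.\ by capping $\phi'$ at height $1/\delta$.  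In either case the standard chain rule for compositions of Sobolev functions with Lipschitz functions vanishing at the origin gives $\phi_\delta(w)\in L^{p_m}([0,T];W^{1,p_m}_0(U))$ with $\nabla\phi_\delta(w)=\phi_\delta'(w)\nabla w$.

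The passage to the limit $\delta\to 0$ is where Proposition~\ref{aux_p} becomes essential.  For $m\in(0,1)$ one has the pointwise bound $\abs{\phi_\delta'(w)\nabla w}^2\leq\left(\frac{2m}{m+1}\right)^2\abs{u}^{m-1}\abs{\nabla w}^2$, and the right-hand side is space-time integrable because
\begin{equation*}
\int_0^T\!\!\int_U\abs{u}^{m-1}\abs{\nabla w}^2\dx\dr=\frac{(m+1)^2}{4m}\int_0^T\!\!\int_\mathbb{R}\!\!\int_U\abs{\xi}^{m-1}q\dx\dxi\dr,
\end{equation*}
which is finite by Proposition~\ref{aux_p} applied with $\delta=m$.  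Dominated convergence then yields $\phi_\delta'(w)\nabla w\to\frac{2m}{m+1}\abs{u}^{(m-1)/2}\nabla w$ in $L^2$, while the bound $\abs{\phi_\delta(w)}\leq\abs{u}^m$ (from concavity of $\xi\mapsto\xi^{2m/(m+1)}$ on $\mathbb{R}_+$) gives $\phi_\delta(w)\to u^{[m]}$ in $L^2$.  Uniqueness of the weak gradient identifies $\nabla u^{[m]}$ with the stated expression, the quantitative estimate then follows from Proposition~\ref{aux_p} at $\delta=m$ together with the Poincar\'e inequality to absorb the $L^2$-part, and the analogous argument in the porous medium case uses H\"older's inequality
\begin{equation*}
\int_0^T\!\!\int_U\abs{u}^{(m-1)p_m/2}\abs{\nabla w}^{p_m}\dx\dr\leq\bigg(\int_0^T\!\!\int_U\abs{\nabla w}^2\bigg)^{p_m/2}\bigg(\int_0^T\!\!\int_U\abs{u}^{m+1}\bigg)^{(m-1)/(2m)},
\end{equation*}
Lemma~\ref{interpolation}, and Proposition~\ref{aux_p} at $\delta=1$ to obtain the bound by $C\norm{u_0}_{L^2(U)}^2$.

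The main obstacle is the fast diffusion range $m\in(0,1)$: the derivative $\phi'$ fails to be locally integrable at the origin, so $L^2$-integrability of $\nabla w$ alone does not permit a classical definition of $\nabla u^{[m]}$.  It is precisely the sharp singular-moment bound of Proposition~\ref{aux_p} on $\int\abs{\xi}^{m-1}q$ that unlocks the chain rule.  Once the formula $\nabla u^{[m]}=\frac{2m}{m+1}\abs{u}^{(m-1)/2}\nabla w$ and the $L^{p_m}$ integrability are in place, the vanishing trace is automatic, since $u^{[m]}$ is a $W^{1,p_m}$-limit of functions in $W^{1,p_m}_0(U)$ and therefore itself lies in $W^{1,p_m}_0(U)$.
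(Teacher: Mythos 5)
Your proposal is correct and follows essentially the same route as the paper: a truncated Lipschitz approximation of $\xi\mapsto\xi^{[2m/(m+1)]}$ (the paper's $f_M$ with $M=1/\delta$), the chain rule for Sobolev functions, Proposition~\ref{aux_p} with the singular moment $\delta=m$ in the fast-diffusion case and $\delta=1$ together with Lemma~\ref{interpolation} in the porous-medium case, dominated convergence, and Poincar\'e. The only stylistic difference is that you apply space--time H\"older directly whereas the paper applies spatial H\"older pointwise in time; one small imprecision in your commentary is that $\phi'$ actually is locally integrable near the origin (its exponent $(m-1)/(m+1)$ exceeds $-1$)---the real obstruction is that $|\phi'(w)|^2|\nabla w|^2$ requires the singular moment bound, as you correctly use.
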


\begin{proof}  Let $u_0\in L^2(U)$, and suppose that $u$ is a pathwise kinetic solution of \eqref{intro_eq} with initial data $u_0$.  For each $M>0$, let $f_M:\mathbb{R}\rightarrow\mathbb{R}$ denote the unique function satisfying, for each $\xi\in\mathbb{R}$,
$$f'_M(\xi):=\left(\frac{2m}{m+1}\abs{\xi}^{\frac{m-1}{m+1}}\wedge M\right)\;\;\textrm{and}\;\;f_M(0)=0.$$
In particular, as $M\rightarrow\infty$, we have $f_M(\xi)\rightarrow \xi^{\left[\frac{2m}{m+1}\right]}$.

Let $T>0$ be fixed but arbitrary.  Part (i) of Definition~\ref{sol_def} implies that, for each $M>0$,
\begin{equation}\label{bdry_1}f_M\left(u^{\left[\frac{m+1}{2}\right]}\right)\in L^2([0,T];H^1_0(U)),\end{equation}
with
\begin{equation}\label{bdry_100}\nabla f_M\left(u^{\left[\frac{m+1}{2}\right]}\right)=f_M'\left(u^{\left[\frac{m+1}{2}\right]}\right)\nabla u^{\left[\frac{m+1}{2}\right]}.\end{equation}
If $m\in(1,\infty)$, for $p_m:=\frac{m+1}{m}\in(1,2)$, H\"older's inequality, \eqref{bdry_1}, and \eqref{bdry_100} imply that, for each $M>0$ and $t\in[0,T]$, for $C=C(m)>0$,
$$\begin{aligned} \norm{\nabla f_M\left(u^{\left[\frac{m+1}{2}\right]}(\cdot,t)\right)}_{W^{1,p_m}(U;\mathbb{R}^d))} & \leq  \norm{f'_M\left(u^{\left[\frac{m+1}{2}\right]}(\cdot,t)\right)}_{L^{\frac{2(m+1)}{m-1}}(U)}\norm{\nabla u^{\left[\frac{m+1}{2}\right]}(\cdot,t)}_{L^2(U;\mathbb{R}^d)} \\  & \leq  C\norm{u}^{\frac{2}{m-1}}_{L^{m+1}(U)}\norm{\nabla u^{\left[\frac{m+1}{2}\right]}(\cdot,t)}_{L^2(U;\mathbb{R}^d)}.\end{aligned}$$
For $C=C(m,U)>0$, it follows from Lemma~\ref{interpolation} that
$$\norm{\nabla f_M\left(u^{\left[\frac{m+1}{2}\right]}(\cdot,t)\right)}_{W^{1,p_m}(U;\mathbb{R}^d)}\leq C\norm{\nabla u^{\left[\frac{m+1}{2}\right]}(\cdot,t)}_{L^2(U;\mathbb{R}^d)}^{\frac{2m}{m+1}}.$$
In particular, for $C=C(m,U)>0$,
$$\norm{\nabla f_M\left(u^{\left[\frac{m+1}{2}\right]}(\cdot,t)\right)}^{p_m}_{W^{1,p_m}(U;\mathbb{R}^d)}\leq C\norm{\nabla u^{\left[\frac{m+1}{2}\right]}(\cdot,t)}_{L^2(U;\mathbb{R}^d)}^2.$$
Finally, using the definition of the parabolic defect measure and Proposition~\ref{aux_p} with $\delta=1$, for $C=C(m,U,T)>0$,
\begin{equation}\label{bdry_2} \norm{\nabla f_M\left(u^{\left[\frac{m+1}{2}\right]}\right)}^{p_m}_{L^{p_m}([0,T];W^{1,p_m}(U:\mathbb{R}^d))}\leq C\int_0^T\int_\mathbb{R}\int_U q\dx\dxi\dr\leq C\norm{u_0}^2_{L^2(U)}.\end{equation}
Since $p_m\in(1,2)$, H\"older's inequality and \eqref{bdry_1} imply that, for each $M>0$,
$$f_M\left(u^{\left[\frac{m+1}{2}\right]}\right)\in L^{p_m}\left([0,T];W^{1,p_m}_0(U)\right).$$
It follows from \eqref{bdry_1}, \eqref{bdry_100}, \eqref{bdry_2}, and the Poincar\'e inequality that, after passing to the limit $M\rightarrow\infty$,
$$u^{[m]}\in L^{p_m}\left([0,T];W^{1,p_m}_0(U)\right)\;\;\textrm{with}\;\;\nabla u^{[m]}=\frac{2m}{m+1}\abs{u}^{\frac{m-1}{2}}\nabla u^{\left[\frac{m+1}{2}\right]},$$
with, for $C=C(m,U,T)>0$,
$$\norm{u^{[m]}}^{p_m}_{L^{p_m}([0,T];W^{1,p_m}_0(U))}\leq C\norm{u_0}^2_{L^2(U)}.$$

If $m\in(0,1]$, it follows from the definition of the parabolic defect measure and \eqref{bdry_1} that, for $C=C(m)>0$,
$$\begin{aligned}\norm{\nabla f_M\left(u^{\left[\frac{m+1}{2}\right]}\right)}_{L^2([0,T];L^2(U;\mathbb{R}^d))}\leq  & C\int_0^T\int_\mathbb{R}\int_U\abs{f'_M\left(\xi^{\left[\frac{m+1}{2}\right]}\right)}^2q(x,\xi,r) \\ \leq & C\int_0^T\int_\mathbb{R}\int_U\abs{\xi}^{m-1}q(x,\xi,r).\end{aligned}$$
Using Proposition~\ref{aux_p} with $\delta=m$, for $C=C(m,U,T)>0$,
\begin{equation}\label{bdry_4}  \norm{\nabla f_M\left(u^{\left[\frac{m+1}{2}\right]}\right)}^2_{L^2([0,T];L^2(U;\mathbb{R}^d))} \leq C\left(\norm{u_0}^{1+m}_{L^{1+m}(U)}+\norm{u_0}_{L^2(U)}^{\frac{4m}{m+1}}\right).\end{equation}
Therefore, it follows from \eqref{bdry_1}, \eqref{bdry_100}, and \eqref{bdry_4} that, after passing to the limit $M\rightarrow\infty$,
$$u^{[m]}\in L^2\left([0,T];W^{1,2}_0(U)\right)\;\;\textrm{with}\;\;\nabla u^{[m]}=\frac{2m}{m+1}\abs{u}^{\frac{m-1}{2}}\nabla u^{\left[\frac{m+1}{2}\right]},$$
with, for $C=C(m,U,T)>0$,
$$\norm{u^{[m]}}^2_{L^2([0,T];W^{1,2}_0(U))}\leq C\left(\norm{u_0}^{1+m}_{L^{1+m}(U)}+\norm{u_0}_{L^2(U)}^{\frac{4m}{m+1}}\right).$$
This completes the proof.  \end{proof}

\end{appendix}

\section*{Acknowledgements}  The first author was supported by the National Science Foundation Mathematical Sciences Postdoctoral Research Fellowship under Grant Number 1502731.  The second author acknowledges financial support by the DFG through the CRC 1283 ``Taming uncertainty and profiting from randomness and low regularity in analysis, stochastics and their applications.''

\bibliography{MultiplicativeNoise}
\bibliographystyle{plain}

\end{document}